\pgfplotsset{compat=newest}
\pgfplotsset{width=8cm,compat=1.15}
\newtheorem{proposition}{Proposition}[section]
\newenvironment{proof}[1][Proof]{\textbf{#1.} }{\hfill$\Box$}
\begin{document}
    
\title{Finding $K$ dissimilar paths using integer linear formulations}
    
\author{Ali Moghanni%
	\thanks{Universidade de Coimbra, CMUC, Departmento de Matemática,
		3001-501 Coimbra, Portugal,
		(\texttt{ali.moghanni@mat.uc.pt})}
	\and
	Marta Pascoal%
	\thanks{Universidade de Coimbra, CMUC, Departamento de Matemática,
		3001-501 Coimbra, Institute for Systems Engineering and Computers -- Coimbra,
		rua S\'\i lvio Lima, Pólo II, 3030-290 Coimbra, Portugal,
        Dipartimento di Elettronica, Informazione e Bioingegneria, Politecnico di Milano, Piazza Leonardo da Vinci, 32, Milan, 20133, Italy
		(\texttt{marta.brazpascoal@polimi.it})}
		\footnote{Corresponding author}
	\and
	Maria Teresa Godinho%
	\thanks{Department of  Mathematics and Physical Sciences, Polytechnic Institute of Beja, Campus do Instituto Politécnico de Beja, rua Pedro Soares, 7800-295 Beja, Portugal, and CMAFcIO, Faculdade de Ciências da Universidade de Lisboa, Campo Grande, 1749-016 Lisboa 
		(\texttt{mtgodinho@ipbeja.pt})}
}

\date{}

\maketitle
\begin{abstract}
While finding a path between two nodes is the basis for several applications, the need for alternative paths also may have various motivations.
For instance, this can be of interest for ensuring reliability in a
telecommunications network, for reducing the consequences of possible accidents in the transportation of hazardous materials, or to decrease the risk of robberies in money distribution.
Each of these applications has particular characteristics, but they all have the common purpose of searching for a set of paths which are as dissimilar as possible with respect to the nodes/arcs that compose them.

In this work we present linear integer programming formulations for finding $K$ dissimilar paths, with the main goal of preventing the overlap of arcs in the paths for a given integer $K$.
The different formulations are tested for randomly generated general networks and for grid networks.
The obtained results are compared in terms of the solutions' dissimilarity and of the run time.
Two of the new formulations are able to find 10 paths with better average and minimum dissimilarity values than an iterative approach in the literature, in less than 20 seconds, for random networks with 500 nodes and 5000 arcs.

\paragraph{Keywords:}
$K$ alternative paths, Dissimilarity, Integer linear programming formulations
\end{abstract}

\baselineskip15pt
\section{Introduction\label{sec:1}}  
Let $(N,A)$ denote a given directed network consisting of a set
$N=\{1,\dots,n\}$ of nodes and a set $A\subseteq N\times N$ of $m$
arcs. Let $s$ and $t$ be two different nodes of $N$, called the
source node and the target node, respectively.
Finding a path in the network $(N,A)$ between the nodes $s$ and $t$
is one of the most classical and widely used network optimization
problems, and the basis for several applications in operations research.
Studying the determination of alternative paths, on the other hand, is an interesting problem by itself that stems from different real-life problems but has been considerably less studied than the former.
For instance, in a modern and industrialized society, routing hazardous
materials like poisonous gases or radioactive materials is an important issue, so the need for alternative safe routes is crucial for reducing the risk of disaster in case of accidents or if the best route becomes infeasible due to road construction \cite{Akgun_2000,CaramiaJMM_2010,DellOlmo_2005,GopalanOR_1990}. Repeating paths is also avoided in money collection, where having alternative paths/routes decreases the risk of robberies and can be used in case of danger of robberies
\cite{CalvoCOR_2003, Constantino_2017}. Additionally, in
telecommunications, a backup path is often replaced by a primary one if a failure occurs along it or if it can be used simultaneously to spread information transmitted at a specific time \cite{GomesJTIT_2010,GomesPNC_2016}.
    
Let $K\in\mathbb{N}$ be a given number of alternative paths to be found.
The definition of alternative paths may vary depending on the application, the common denominator being that the paths in the solution should share the least possible network resources.
Several works use dissimilarity measures between two paths as the metric for achieving this purpose, nevertheless, also this notion is not uniquely defined, nor would that be desirable provided that the metrics are often tailored to the application.
For instance, \cite{Erkut_1998} developed four indices for measuring the similarity between two paths, defined as follows:
\begin{quote}
\begin{tabular}{lll}
Index 1:
$S_1(p,q)=\dfrac{1}{2}
\left(\dfrac{L(p\cap q)}{L(p)}+\dfrac{L(p\cap q)}{L(q)}\right)$\\\\

Index 2: $S_2(p,q)=\sqrt{\dfrac{L^2(p\cap q)}{L(p)L(q)}}$\\\\

Index 3: $S_3(p,q)=\dfrac{L(p\cap q)}{\max\{L(p),L(q)\}}$\\\\

Index 4: $S_4(p,q)=\dfrac{L(p\cap q)}{L(p\cup q)}$
\end{tabular}
\end{quote}
where $p$ and $q$ are two paths and $L(p)$ denotes the length of path $p$, that is, its number of arcs. The dissimilarity between $p$ and $q$ is then given by $D_i(p,q)=1-S_i(p,q)$, for $i=1,2,3,4$.
The dissimilarities vary between 0 and 1, the first when the two paths coincide and the latter when they are arc disjoint.
The authors also showed that there is a strong correlation between these indices.
Other works have extended these concepts by including information about the underlying area affected by the paths or the distance between them, once again depending on the problem \cite{DellOlmo_2005,MARTI_2009}.
    
Additionally, in concrete applications the problem has frequently been handled from a bi-objective point of view, having the goals of optimizing both the total paths length/cost as well as the dissimilarity of the set of paths.
Now, while the shortest path problem or the ranking of $K$ shortest paths problem are well-known and well-studied problems, the same is not true when the objective function represents paths dissimilarity.
Many of these bi-objective problems have been addressed from an algorithmic approach whose primary goal is not to optimize the dissimilarity and, to our knowledge, there are no published studies considering the paths dissimilarity problem from an integer programming point of view.

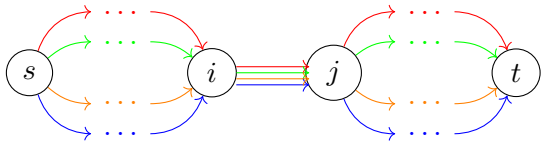
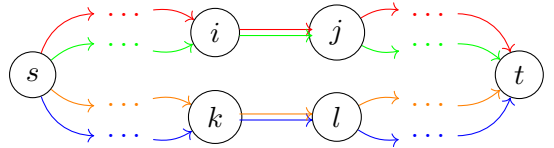
\begin{figure}[htb]
\centering
\begin{subfigure}[h]{0.45\textwidth}
\centering
    \begin{tikzpicture}
        [scale=.8,every node/.style={circle,text=black,draw,minimum size=0.5cm}]
        \node (a) at (0,0) [] {$s$};
        \node (b) at (3,0) [] {$i$};
        \node (c) at (5,0) [] {$j$};
        \node (d) at (8,0) [] {$t$};
    
        \node[draw=none,green] (ellipsis1) at (1.5,0.5) {$\ldots$};
        \node[draw=none,red] (ellipsis2) at (1.5,1) {$\ldots$};
        \node[draw=none,blue] (ellipsis3) at (1.5,-1) {$\ldots$};
        \node[draw=none,orange] (ellipsis4) at (1.5,-0.5) {$\ldots$};
    
        \node[draw=none,green] (ellipsis5) at (6.5,0.5) {$\ldots$};
        \node[draw=none,red] (ellipsis6) at (6.5,1) {$\ldots$};
        \node[draw=none,blue] (ellipsis7) at (6.5,-1) {$\ldots$};
        \node[draw=none,orange] (ellipsis8) at (6.5,-0.5) {$\ldots$};
    
        \draw (c) edge[->,bend left=35,red] (ellipsis6);
        \draw (ellipsis6) edge[->,bend left=35,red] (d);
        \draw (c) edge[->,bend right=35,blue] (ellipsis7);
        \draw (ellipsis7) edge[->,bend right=35,blue] (d);
        \draw (c) edge[->,bend left=20,green] (ellipsis5);
        \draw (ellipsis5) edge[->,bend left=20,green] (d);
        \draw (c) edge[->,bend right=20,orange] (ellipsis8);
        \draw (ellipsis8) edge[->,bend right=20,orange] (d);

        \draw (a) edge[->,bend left=35,red] (ellipsis2);
        \draw (ellipsis2) edge[->,bend left=35,red] (b);
        \draw (a) edge[->,bend right=35,blue] (ellipsis3);
        \draw (ellipsis3) edge[->,bend right=35,blue] (b);
        \draw (a) edge[->,bend left=20,green] (ellipsis1);
        \draw (ellipsis1) edge[->,bend left=20,green] (b);
        \draw (a) edge[->,bend right=20,orange] (ellipsis4);
        \draw (ellipsis4) edge[->,bend right=20,orange] (b);
    
        \draw[->,red] (3.4,0.1)--(4.6,0.1);
        \draw[->,green] (3.4,0)--(4.6,0);
        \draw[->,blue] (3.4,-0.2)--(4.6,-0.2);
        \draw[->,orange] (3.4,-0.1)--(4.6,-0.1);
    \end{tikzpicture}
\caption{Solution with one repeated arc, shared by several paths\label{fig:fig2newa}}
\end{subfigure}\hfill
\begin{subfigure}[h]{0.45\textwidth}
\centering
    \begin{tikzpicture}
        [scale=.8,every node/.style={circle,text=black,draw,minimum size=0.5cm}]
        \node (a) at (0,0) [] {$s$};
        \node (b) at (3,0.7) [] {$i$};
        \node (c) at (5,0.7) [] {$j$};
        \node (d) at (8,0) [] {$t$};
        \node (e) at (3,-0.7) [] {$k$};
        \node (f) at (5,-0.7) [] {$l$};
    
        \node[draw=none,green] (ellipsis1) at (1.5,0.5) {$\ldots$};
        \node[draw=none,red] (ellipsis2) at (1.5,1) {$\ldots$};
        \node[draw=none,blue] (ellipsis3) at (1.5,-1) {$\ldots$};
        \node[draw=none,orange] (ellipsis4) at (1.5,-0.5) {$\ldots$};
    
        \node[draw=none,green] (ellipsis5) at (6.5,0.5) {$\ldots$};
        \node[draw=none,red] (ellipsis6) at (6.5,1) {$\ldots$};
        \node[draw=none,blue] (ellipsis7) at (6.5,-1) {$\ldots$};
        \node[draw=none,orange] (ellipsis8) at (6.5,-0.5) {$\ldots$};
    
        \draw (c) edge[->,bend left=20,red] (ellipsis6);
        \draw (ellipsis6) edge[->,bend left=35,red] (d);
        \draw (f) edge[->,bend right=20,blue] (ellipsis7);
        \draw (ellipsis7) edge[->,bend right=35,blue] (d);
        \draw (c) edge[->,bend right=20,green] (ellipsis5);
        \draw (ellipsis5) edge[->,bend left=20,green] (d);
        \draw (f) edge[->,bend left=20,orange] (ellipsis8);
        \draw (ellipsis8) edge[->,bend right=20,orange] (d);

        \draw (a) edge[->,bend left=35,red] (ellipsis2);
        \draw (ellipsis2) edge[->,bend left=20,red] (b);
        \draw (a) edge[->,bend right=35,blue] (ellipsis3);
        \draw (ellipsis3) edge[->,bend right=20,blue] (e);
        \draw (a) edge[->,bend left=20,green] (ellipsis1);
        \draw (ellipsis1) edge[->,bend right=20,green] (b);
        \draw (a) edge[->,bend right=20,orange] (ellipsis4);
        \draw (ellipsis4) edge[->,bend left=20,orange] (e);
    
        \draw[->,red] (3.4,0.75)--(4.6,0.75);
        \draw[->,green] (3.4,0.65)--(4.6,0.65);
        \draw[->,blue] (3.4,-0.75)--(4.6,-0.75);
        \draw[->,orange] (3.4,-0.65)--(4.6,-0.65);
    \end{tikzpicture}
\caption{Solution with many repeated arcs, shared by few paths\label{fig:fig2newb}}
\end{subfigure}
\caption{Different sets of $K=4$ paths\label{fig:fig2new}}
\end{figure}
    
Filling this gap and deepening the understanding of such problems are the main motivations for the present work, also because this may be a relevant contribution for an efficient treatment of a bi-objective problem involving paths dissimilarity.

As mentioned earlier, it is not uncommon to find different understandings of the term ``dissimilarity'' in the literature. In this work, we focus on the conception of dissimilarity as defined by $D_1$. Thus, the presented models aim at producing sets of $K$ paths with good scores in terms of $D_1$.

However, modeling $D_1$ as the objective function of an integer linear programming (ILP) model, presents some difficulties as this is a non linear metric involving an underlying combinatorial problem.
On the other hand, it is intuitive that  minimizing the number of arcs shared by the $K$ paths or minimizing the number of paths that share a common arc in the $K$ paths, favor the dissimilarity of the solution. Furthermore, these problems can be modeled quite easily, obviating the above mentioned difficulties. Thus these alternative ways of looking into the $K$ dissimilar paths problem seem promising and are worth exploring.

In the present work we introduce and compare three families of ILP formulations, each one addressing one of the strategies mentioned before. Figure \ref{fig:fig2new} illustrates their differences and emphasizes some of their limitations. 

For simplicity, assume that all the paths represented in Figure~\ref{fig:fig2new} have the same length.
There are 6 arc overlaps for every pair of paths in Figure \ref{fig:fig2newa} (that is, the sum of $L(p\cap q)$ for every pair of paths $p,q$ in the solution is equal to 6), while  in the solution depicted in Figure~\ref{fig:fig2newb} there are only 2.
Therefore, the second is a better solution than the first with regard to their dissimilarity.
However, if one chooses to count the arcs shared by more than one path, there is only 1 in the solution in Figure \ref{fig:fig2newa} and there are 2 in the solution in Figure \ref{fig:fig2newb}.
Thus, the first solution is the best with regard to this metric.
This situation illustrates the major drawback of the strategy devised by this approach -- the lack of control over the number of overlaps associated to arcs that are used by more than one path.
By contrast, if counting the number of arc repetitions in the solution (given by the number of times an arc is present in the solution, besides its first use), there are 3 in the solution depicted in Figure~\ref{fig:fig2newa} and 2 in the solution depicted in Figure \ref{fig:fig2newb}, favoring the solution that seems to be the most dissimilar one.
Still, this third approach is not exempted of drawbacks, as it does not distinguish between alternative solutions with different dissimilarities.

To (partially) overcome the drawbacks of the two approaches a set of additional constraints is proposed, which imposes a bound on the number of paths that use each arc in the solution.
Formal definitions of each of these problems and concepts are introduced later on.

The remainder of this work is structured as follows.
Section~\ref{sec:2} gives a literature overview of problems related
with finding alternative paths.
In Section~\ref{sec:3} the problem of finding $K$ dissimilar paths between two nodes is presented, and in the next three sections ILP formulations are introduced to provide solutions based on the approaches described above. Computational results for different variants of each formulation are presented at the end of each section.
A set of constraints which help the new formulations to obtain more dissimilar solutions is introduced in Section~\ref{sec:7}.
Overall computational experiments for all approaches are presented in Section~\ref{sec:8}.
The performance of the proposed formulations is analyzed and this is compared to an intuitive and classical approach in the literature of finding alternative paths, the iterative penalty method (IPM) \cite{Johnson_1992}.
The formulations are compared both in terms of the run time and of the dissimilarity of the output solutions, derived from $D_1$.
Some conclusions and future directions for research are outlined in Section~\ref{sec:9}.

\section{Literature review\label{sec:2}}    
Several problems focus on finding a single path between two nodes in a network, which optimizes either a certain criterion or several criteria simultaneously, while others aim at finding a set with a given number of paths, again with respect to one, or several, criteria.
The single path problems have practical interest by themselves, but finding a set of paths may still be relevant, for instance to ensure reliability and having a replacement path in case of failure in the primary one, or simply if several alternatives should avoid sharing resources with other paths.
In this case ranking paths provides a pre-defined number of paths from the source node to the target node by increasing order of the objective function.

Several network optimization problems search for a path between two nodes which optimizes a certain criterion. In many cases it is of interest to extend this problem by searching not only for the best solution but also for the second best, the third best and so on, that is, to rank paths by increasing order of the objective function.
In practice this is useful, for instance, when the paths need to satisfy additional constraints, which can be checked as new solutions are found.
This problem, known as the $K$ shortest paths problem, was first
proposed in 1959, by \citet{hoffman_1959}, and is usually
classified into two variants, one that aims at the determination of unconstrained paths and another one for which the nodes in each solution cannot appear more than once.
Despite the first being easier to solve than the second, both can be solved in polynomial space and time, depending on the number $K$ of paths to be found and on the size of the network. See for example \citet{eppstein_1998,jimenez_1999,martins_1984,Martins_1999} for works on ranking unconstrained paths and \citet{katoh_1982,martins_2003,yen_1971} for works on ranking loopless paths.

The search for solutions when ranking paths is guided by the objective function, therefore, very often the solutions which are close in terms of the cost are also similar in terms of their composition.
In the $K$ disjoint paths problem, a cost objective function of $K$ paths is minimized, while the overlaps between them are forbidden.
The problem can be classified into arc disjoint or node disjoint, the second one being a particular case of the first (for instance, if every node is split into two nodes that are linked by an arc).
The disjointness of the paths is often a requirement in
telecommunications, in order to ensure the reliability of communications.
In practice this is managed by the computation of a pair of paths connecting two given nodes, a primary path to be used as a first option and a backup path to replace the first one if there is a failure along its arcs or nodes.
The determination of $K$ disjoint simple paths has been studied by~\citet{Bhandari_1998,Suurballe_1974,Suurballe_1984}.
Their approaches consist of formulating the problem as a minimum cost flow problem and propose the application of a labeling algorithm, changing the given network.
The arc disjoint version of the problem has been studied in~\citet{frank_1985,guruswami_2003,vygen_1995}.
A review on disjoint path problems can be found in~\citet{Iqbal_2015}.

A handicap of the $K$ disjoint paths problem is that the disjointness condition may be too demanding for some instances and no solution is returned in those cases.
The dissimilar paths problem has been studied in the context of hazardous materials transportation, where the alternative paths should not share a large number of arcs and they should be relatively short in length.
Different methods have been proposed for approaching this problem.
The IPM \citet{Johnson_1992} is one of the most intuitive methods, based on the iterative application of shortest path algorithms.
At each iteration, a cost penalty is associated to each selected arc to discourage them of appearing in the forthcoming iteration;
hence, generating dissimilar paths.
Another proposed method is the Gateway Shortest Path, \citet{Lombard_1993}.
In this case, the generated shortest paths should go through a given set of nodes called a ``gateway''.
Additionally, the concept of ``area under a path'' is used to evaluate the similarity between two paths. The minimax method, by \citet{Kuby_1997}, selects paths starting from $K$ assigned paths using some dissimilarity indices.
\citet{Akgun_2000} reviews the three mentioned methods for generating dissimilar paths, and proposes another dissimilar path model that makes use of a $p$-dispersion location model, \citet{Erkut_1990}. \citet{Erkut_1998} presents four indices to measure the dissimilarity among two paths, one of which will be used later. In~\citet{Carotenuto_2007a}, the authors introduce a model for generating dissimilar paths that takes into account also the risk induced on the arcs in the neighborhood of a selected path. Another work, \citet{Carotenuto_2007b}, also considers the need to distribute the risk of the paths in an equitable way with respect to both the space and the time, avoiding as much as possible the presence of more than one hazardous vehicle at the same time on the same zone.
Later on \citet{DellOlmo_2005} study the problem from a multi-objective perspective. They introduce the concept of ``buffer zone'' in the measure of similarity.
\citet{MARTI_2009} choose approaches different from the previous and consider a spatial point of view in their dissimilarity index.
More recently, \citet{Zajac_2018} works on the bi-objective $K$ dissimilar vehicle routing problem ($k$d-VRP). The work considers two dissimilarity indices: the ``grid metric'', which treats spatial dissimilarity, as well as the ``edge metric'', which defines dissimilarity via shared arcs between different routes.

\section{The $K$ dissimilar paths problem\label{sec:3}}    
Let $(N,A)$ be a directed graph with $|N|=n$ nodes and $|A|=m$ arcs, where $s$ denotes a given source node and $t$ denotes a given target node, $s,t\in N$.
Let also $P$ denote the set of paths in $(N,A)$ from node $s$ to node $t$ and $K$ be a given positive integer.
The goal of the $K$ dissimilar paths problem from $s$ to $t$ is to find a set of $K$ paths in $P$, such that the paths in the set are fairly distributed throughout the network.
Considering dissimilarity based in the index $S_1$ introduced in Section~\ref{sec:1}, the problem can be defined as
$$\begin{array}{ll}
\max & \displaystyle\sum_{i=1}^{K-1}\sum_{j=i+1}^K D_1(p_i,p_j) / \binom{K}{2}\\
\text{subject to} & p_1,p_2,\ldots,p_K \in P
\end{array}$$
where
$$D_1(p,q)=1-S_1(p,q)=1-\dfrac{1}{2}
\left(\dfrac{L(p\cap q)}{L(p)}+\dfrac{L(p\cap q)}{L(q)}\right),$$
for any paths $p,q\in P$, and this is equivalent to minimizing the similarity of the set of $K$ paths,
$$\sum_{i=1}^{K-1}\sum_{j=i+1}^K \left(\dfrac{L(p_i\cap p_j)}{L(p_i)}+\dfrac{L(p_i\cap p_j)}{L(p_j)}\right). $$
This objective function is fractional and difficult to handle directly.
Therefore, we will consider simplifications of the problem.

The next sections introduce three formulations, simpler than this one, but which try to capture the main characteristic of this problem based on different assumptions.

\section{Minimization of the number of arc overlaps for each pair of paths\label{sec:4}}   
    
The length of the paths overlap is a common term to the several similarity indices described in Section \ref{sec:1}.
For now we focus on that length and, by doing so, the objective function becomes linear and simpler to handle.
Thus, the current goal is to find solutions for
\begin{equation}\label{prob1}
\begin{array}{ll}
\min & \displaystyle\sum_{i=1}^{K-1}\sum_{j=i+1}^K L(p_i\cap p_j)\\
\text{subject to} & p_1,p_2,\ldots,p_K \in P
\end{array}
\end{equation}

Given two paths $p,q\in P$, it is said that there is an overlap whenever there is an arc $(i,j)\in A$ that belongs to both $p$ and $q$, that is, if $(i,j)\in p$ and $(i,j)\in q$.
Thus, the number of overlaps between those paths is the number of arcs that appear in both, $OL(p,q)=|\{(i,j)\in A:(i,j)\in p \wedge (i,j)\in q\}|$, which coincides with $L(p\cap q)$.
The number of overlaps in a given set of $K$ paths is the total number of overlaps for each pair of paths, that is,
$$OL(\{p_1,p_2,\ldots,p_K\})=\sum_{i=1}^{K-1}\sum_{j=i+1}^K OL(p_i,p_j),$$
which is the objective function in problem (\ref{prob1}).
To illustrate these concepts, we recall the example in Figure \ref{fig:fig2new}.
The arc $(i,j)$ belongs to all four paths in Figure \ref{fig:fig2newa}. Therefore, the number of overlaps for those paths is 6.
In Figure \ref{fig:fig2newb} the arc $(i,j)$ appears in the paths in red and green, while the arc $(k,l)$ appears in the blue and orange paths. So, there are 2 arc overlaps in the 4 paths.
This latter solution is better than the first with respect to problem (\ref{prob1}).
    
The first formulation intends to model problem (\ref{prob1}) as an integer linear program.
Considering the decision variables $x_{ij}^k$ equal to 1 if the arc
$(i,j)$ lies in the $k$-th path from $s$ to $t$, or 0 otherwise, for any
$(i,j)\in A$ and $k=1,\ldots,K$, the problem is formulated as follows:
\begin{subequations}\label{eqf3}
\begin{eqnarray}
\min && f_1(x,z,v)=\sum_{(i,j)\in A}v_{ij}\label{eqf3of}\\
\mbox{subject to} && \sum_{j\in N:(i,j)\in A}x_{ij}^k-\sum_{j\in N:(j,i)\in A}x_{ji}^k=\left\{
    \begin{array}{rl}
     1 & i=s\\
     0 & i\ne s,t\\
    -1 & i=t
    \end{array}\right.,\ \ \ k=1,\ldots,K\label{eqf3c1}\\
	&& z_{ij}^{kl}\le x_{ij}^k,\ \ \ (i,j)\in A, \ \ \ k=1,\ldots,K-1, \ \ \ l=k+1,\ldots,K\label{eqf3c2}\\
	&& z_{ij}^{kl}\le x_{ij}^l,\ \ \ (i,j)\in A, \ \ \ k=1,\ldots,K-1, \ \ \ l=k+1,\ldots,K\label{eqf3c3}\\
	&& z_{ij}^{kl}\ge x_{ij}^k+x_{ij}^l-1,\ \ \ (i,j)\in A, \ \ \ k=1,\ldots,K-1, \ \ \ l=k+1,\ldots,K\label{eqf3c4}\\
	&& v_{ij}=\sum_{k=1}^{K-1}\sum_{l=k+1}^Kz_{ij}^{kl},\ \ \ (i,j)\in A\label{eqf3c5}\\
	&& x_{ij}^k\in\{0,1\}, \ z_{ij}^{kl}\in\{0,1\}, \ \ \ (i,j)\in A,\ \ \ k=1,\ldots,K,\ \ \ l=k+1,\ldots,K\label{eqf3c6}
\end{eqnarray}
\end{subequations}
This formulation may considerably big for problems of modest size, given that the variables $z$ are related with any pair of paths from $s$ to $t$.
Thus, it has $O(mK^2)$ variables and $O(mK^2+nK)$ constraints.
    
The  flow conservation constraints (\ref{eqf3c1}) ensure the existence of $K$ paths from node $s$ to node $t$;
the constraints (\ref{eqf3c2}) -- (\ref{eqf3c4}) guarantee that variables $z_{ij}^{kl}\in\{0,1\}$ are equal to 1 if and only if the arc $(i,j)$ is used both in the paths defined by
the variables $x_{ij}^k$ and $x_{ij}^l$, for $(i,j)\in A$, $k\in\{1,\ldots,K-1\}$, $l\in\{k+1,\ldots,K\}$. In fact:
\begin{itemize}
\item
If $x_{ij}^k=0$ or $x_{ij}^l=0$, then (\ref{eqf3c2}) or (\ref{eqf3c3}) imply that $z_{ij}^{kl}=0$, for any $(i,j)\in A$, $k\in\{1,\ldots,K-1\}$, $l\in\{k+1,\ldots,K\}$.
\item
If $x_{ij}^k=x_{ij}^l=1$, then both (\ref{eqf3c2}) and (\ref{eqf3c3}) imply that $z_{ij}^{kl}\le1$, whereas (\ref{eqf3c4}) imply that $z_{ij}^{kl}\ge1$.
Thus, $z_{ij}^{kl}=1$ for $(i,j)\in A$, $k\in\{1,\ldots,K-1\}$, $l\in\{k+1,\ldots,K\}$.
\end{itemize}
The constraints (\ref{eqf3c5}) state that the auxiliary variables $v_{ij}$ correspond to the number of paths that use arc $(i,j)\in A$ and constraints (\ref{eqf3c6}) define the variables. Observe that because the $x_{ij}^{k}$ are binary variables, then by  (\ref{eqf3c2}) -- (\ref{eqf3c4}), the variables $z_{ij}^{kl}$ are binary as well and, consequently, by (\ref{eqf3c5}), the variables $v_{ij}$ are implicitly defined as non-negative integers, for any $(i,j)\in A$, $k\in\{1,\ldots,K-1\}$, $l\in\{k+1,\ldots,K\}$.
The objective function (\ref{eqf3of}) corresponds to $OL(\{p_1,p_2,\ldots,p_K\})$, the total number of arcs that are shared by at least two paths from $s$ to $t$.

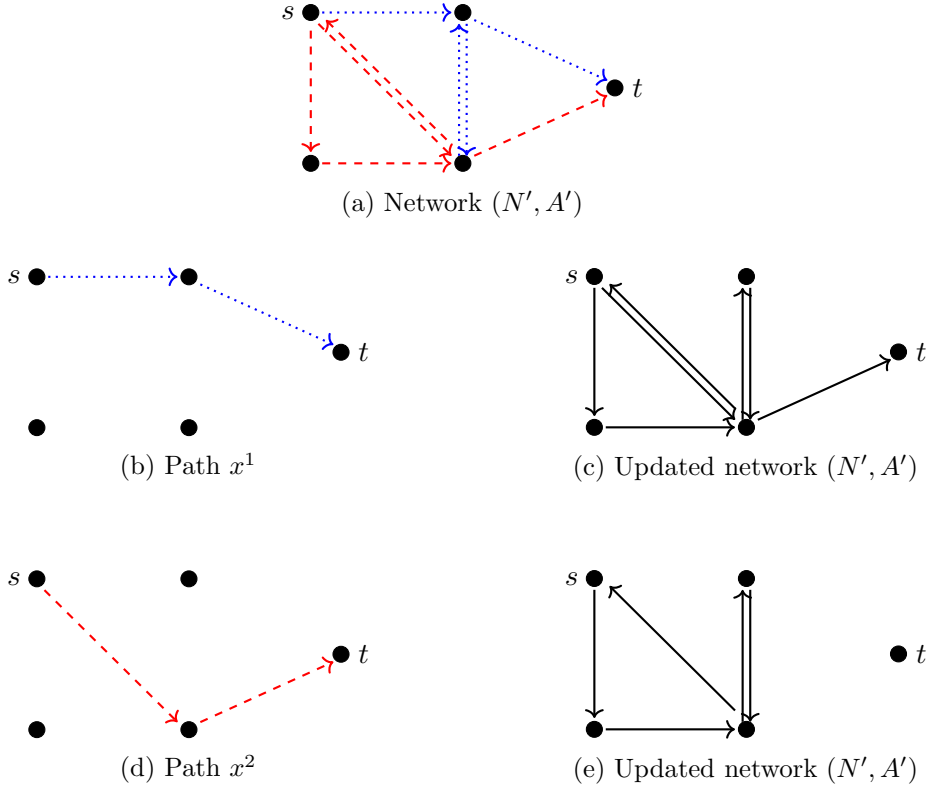
\begin{figure}[htb]
\centering
\begin{subfigure}[h]{0.45\textwidth}
\centering
\begin{tikzpicture}
\filldraw [black]
(0,0) circle [radius=3pt]
(0,2) circle [radius=3pt]
(2,0) circle [radius=3pt]
(2,2) circle [radius=3pt]
(4,1) circle [radius=3pt];
\draw (-0.3,2) node {$s$};
\draw (4.3,1) node {$t$};

\draw[->,red,dashed,thick] (0.15,0)--(1.85,0);
\draw[<-,red,dashed,thick] (0,0.15)--(0,1.85);
\draw[->,red,dashed,thick] (2.15,0.1)--(3.9,0.9);
\draw[->,red,dashed,thick] (0.1,1.85)--(1.85,0.1);
\draw[<-,red,dashed,thick] (0.2,1.9)--(1.85,0.25);
\draw[->,blue,dotted,thick] (0.15,2)--(1.85,2);
\draw[->,blue,dotted,thick] (1.95,0.1)--(1.95,1.85);
\draw[->,blue,dotted,thick] (2.05,1.85)--(2.05,.1);
\draw[->,blue,dotted,thick] (2.15,1.9)--(3.9,1.1);
\end{tikzpicture}
\caption{Network $(N',A')$\label{fig:fig1a}}
\end{subfigure}\vspace{0.5cm}
\begin{subfigure}[h]{0.45\textwidth}
\centering
\end{subfigure}

\begin{subfigure}[h]{0.45\textwidth}
\centering
\begin{tikzpicture}
\filldraw [black]
(0,0) circle [radius=3pt]
(0,2) circle [radius=3pt]
(2,0) circle [radius=3pt]
(2,2) circle [radius=3pt]
(4,1) circle [radius=3pt];
\draw (-0.3,2) node {$s$};
\draw (4.3,1) node {$t$};

\draw[->,blue,dotted,thick] (0.15,2)--(1.85,2);
\draw[->,blue,dotted,thick] (2.15,1.9)--(3.9,1.1);
\end{tikzpicture}
\caption{Path $x^1$\label{fig:fig1b}}
\end{subfigure}\vspace{0.5cm}
\begin{subfigure}[h]{0.45\textwidth}
\centering
\begin{tikzpicture}
	\filldraw [black]
(0,0) circle [radius=3pt]
(0,2) circle [radius=3pt]
(2,0) circle [radius=3pt]
(2,2) circle [radius=3pt]
(4,1) circle [radius=3pt];
\draw (-0.3,2) node {$s$};
\draw (4.3,1) node {$t$};
\draw[->,thick] (0.15,0)--(1.85,0);
\draw[<-,thick] (0,0.15)--(0,1.85);
\draw[->,thick] (1.95,0.1)--(1.95,1.85);
\draw[->,thick] (2.05,1.85)--(2.05,.1);
\draw[->,thick] (2.15,0.1)--(3.9,0.9);
\draw[->,thick] (0.1,1.85)--(1.85,0.1);
\draw[<-,thick] (0.2,1.9)--(1.85,0.25);
\end{tikzpicture}
\caption{Updated network $(N',A')$\label{fig:fig1c}}
\end{subfigure}\vspace{0.5cm}

\begin{subfigure}[h]{0.45\textwidth}
\centering
\begin{tikzpicture}
\filldraw [black]
(0,0) circle [radius=3pt]
(0,2) circle [radius=3pt]
(2,0) circle [radius=3pt]
(2,2) circle [radius=3pt]
(4,1) circle [radius=3pt];
\draw (-0.3,2) node {$s$};
\draw (4.3,1) node {$t$};

\draw[->,red,dashed,thick] (2.15,0.1)--(3.9,0.9);
\draw[->,red,dashed,thick] (0.1,1.85)--(1.85,0.1);
\end{tikzpicture}
\caption{Path $x^2$\label{fig:fig1d}}
\end{subfigure}
\begin{subfigure}[h]{0.45\textwidth}
\centering
\begin{tikzpicture}
\filldraw [black]
(0,0) circle [radius=3pt]
(0,2) circle [radius=3pt]
(2,0) circle [radius=3pt]
(2,2) circle [radius=3pt]
(4,1) circle [radius=3pt];
\draw (-0.3,2) node {$s$};
\draw (4.3,1) node {$t$};

\draw[->,thick] (0.15,0)--(1.85,0);
\draw[<-,thick] (0,0.15)--(0,1.85);
\draw[->,thick] (1.95,0.1)--(1.95,1.85);
\draw[->,thick] (2.05,1.85)--(2.05,.1);
\draw[<-,thick] (0.2,1.9)--(1.85,0.25);
\end{tikzpicture}
\caption{Updated network $(N',A')$\label{fig:fig1e}}
\end{subfigure}
\caption{Application of Algorithm \ref{alg:alg1} to a solution with loops\label{fig:fig1}}
\end{figure}
    
Formulation (\ref{eqf3}) may admit optimal solutions that contain subtours. For instance, the network depicted in Figure~\ref{fig:fig1a} shows a possible solution for finding $K=2$ paths from node $s$ to node $t$.
In this case, the solution has objective value 0, given that the paths represented in red and in blue do not have any arcs in common. However, each of those paths contains one subtour.
Nevertheless, if the problem is feasible, there always exists a loopless optimal solution to the problem, and this is easy to compute after a first optimal solution has been found.
Solutions with loops can be avoided by adding subtour elimination constraints to the formulation or by adding a term to the objective function that penalizes the utilization of arcs.
Another alternative is to apply a post processing algorithm that allows to extract one loopless optimal solution from a given optimal solution.
Algorithm~\ref{alg:alg1} outlines the procedure to obtain such a loopless solution, when given a solution $x$.
    
\begin{algorithm}[htb]
\DontPrintSemicolon
$N'\leftarrow \{i\in N: x_{ij}^k=1\mbox{ for some }j\in N \wedge k=1,\ldots,K\}\cup\{t\}$\;
$A'\leftarrow \{(i,j)\in A: x_{ij}^k=1\mbox{ for some }k=1,\ldots,K\}$\;
\lFor{$(i,j)\in A'$}{
   $u_{ij}\leftarrow \sum_{k=1}^K x_{ij}^k$
}
\For{$k=1,\ldots,K$}{
   \lFor{$(i,j)\in A$}{
      $\bar{x}_{ij}\leftarrow 0$
   }
   $\bar{x}^k\leftarrow$ shortest path from $s$ to $t$ in terms of the
   number of arcs in network $(N',A')$\;
   \For{$(i,j)\in A'$ such that $\bar{x}^k_{ij}=1$}{
      $u_{ij}\leftarrow u_{ij}-1$\;
      \lIf{$u_{ij}=0$}{
 Delete arc $(i,j)$ from $A'$
      }
   }
}
\caption{Algorithm for removing loops from a given solution $x$\label{alg:alg1}}
\end{algorithm}
    
In lines 1 and 2 of Algorithm~\ref{alg:alg1} the network corresponding
to the arcs in the given solution $x$ is built.
Line 3 assigns each arc $(i,j)\in A'$ with the number of times it appears
in $x$, $u_{ij}$. This value works like the arc $(i,j)$'s capacity.
Then, in each iteration of the loop in lines 4 to 9, one path is determined
and the capacity of an arc is updated every time it is used.
The loop runs $K$ times exactly, so that $K$ paths are found. Moreover,
the paths determined on line 5 are always loopless. These paths can be found by
means of breadth-first search, \citet{Cormen_2009}, and therefore the algorithm
runs in $O(Km)$ time. Also, the empirical tests reported later showed that its run time is very small when compared to solving any of the formulations here presented.
    
\begin{proposition}\label{prop1}
Let $(x,z,v)$ be a feasible solution for formulation (\ref{eqf3}) and $\bar{x}$ be
the corresponding vector output by Algorithm \ref{alg:alg1}.
Then, $\bar{x}$ defines $K$ loopless paths from $s$ to $t$.
\end{proposition}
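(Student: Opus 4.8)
The plan is to read the aggregated solution as a single integer $s$--$t$ flow and to argue that Algorithm~\ref{alg:alg1} extracts exactly one unit of this flow, as a loopless path, in each of its $K$ iterations. First I would note that, by the flow conservation constraints~(\ref{eqf3c1}), every $x^k$ is the incidence vector of a unit flow from $s$ to $t$; therefore the capacities initialised on line~3, $u_{ij}=\sum_{k=1}^{K}x_{ij}^k$, constitute a nonnegative integer flow from $s$ to $t$ of value $K$ on $(N',A')$. I would also record the invariant that, at any moment of the execution, $A'$ is precisely the set of arcs whose current capacity $u_{ij}$ is positive, since an arc is removed exactly when its capacity drops to $0$.

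The heart of the argument is an induction on the iteration counter $k$, with the hypothesis that at the start of iteration $k$ the current capacities form a nonnegative integer $s$--$t$ flow of value $K-(k-1)$ supported on $A'$. The base case is the observation above. The step has two parts. The first, and the main obstacle, is to guarantee that line~5 really returns a path. Since the remaining value $K-(k-1)$ is strictly positive for every $k\le K$, I would show by a cut argument that $t$ is reachable from $s$ through positive-capacity arcs: letting $R$ be the set of nodes reachable from $s$ in $(N',A')$ and summing the conservation equations over $R$, every arc leaving $R$ has zero capacity (otherwise its head would lie in $R$), so the net flow out of $R$ is at most $0$; were $t\notin R$ this net flow would instead equal the positive value $K-(k-1)$, a contradiction. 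Hence $t\in R$, an $s$--$t$ path exists, and the breadth-first search succeeds.

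For the second part I would verify that extracting the path restores the invariant. Because $\bar{x}^k$ is computed as a path with the fewest arcs, it visits no node twice and is thus loopless and uses each arc at most once; decreasing $u_{ij}$ by one along its arcs therefore keeps all capacities nonnegative, preserves conservation at every intermediate node, and lowers the supply at $s$ and the demand at $t$ by one unit each. The surviving capacities then form an $s$--$t$ flow of value $K-k$, which closes the induction. Finally, since the loop runs exactly $K$ times and each $\bar{x}^k$ is loopless by the shortest-path property, the vector $\bar{x}$ returned by the algorithm encodes $K$ loopless paths from $s$ to $t$, as claimed.
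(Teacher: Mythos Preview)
Your argument is correct and follows the same underlying idea as the paper's proof: looplessness comes from the shortest-path computation with positive (unit) costs, and the existence of $K$ paths comes from the flow-conservation constraints~(\ref{eqf3c1}). The paper's proof, however, is extremely terse---it simply asserts that ``the remarks above show that the result holds'' and points to the looplessness of shortest paths and the fact that $x$ satisfies~(\ref{eqf3c1}), leaving entirely implicit \emph{why} an $s$--$t$ path is still available at each of the $K$ iterations.

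Your proposal fills exactly that gap: you read the aggregated capacities $u_{ij}=\sum_k x_{ij}^k$ as an integer $s$--$t$ flow of value $K$, maintain this as an explicit invariant, and supply a clean cut argument to show that line~5 cannot fail while the residual value is positive. This is the standard flow-decomposition reasoning that the paper tacitly assumes; what you gain is a self-contained proof rather than an appeal to informal remarks. So the route is the same, but your version is the rigorous one.
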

\begin{proof}
The remarks above show that the result holds.
Shortly, every path defined by $\bar{x}^k$ is loopless, for any $k=1,\ldots,K$, because it is the solution of a shortest path problem with all costs positive (unitary).
Additionally, $x$ is the characteristic vector of $K$ paths from $s$ to $t$, given that it satisfies the constraints (\ref{eqf1c1}). Therefore, $\bar{x}^k$ defines $K$ loopless paths from $s$ to $t$.
\end{proof}
    
\begin{proposition}\label{prop4}
Let $(x,z,v)$ be an optimal solution for formulation (\ref{eqf3}).
Let $\bar{x}$ be the vector output by Algorithm \ref{alg:alg1} when applied to $x$,
and $\bar{z}\in\{0,1\}^{mK^2}$ and $\bar{v}\in\mathbb{N}_0^m$ be vectors which
satisfy the constraints (\ref{eqf3c2}) -- (\ref{eqf3c5}).
Then, $(\bar{x},\bar{z},\bar{v})$ is a loopless optimal solution for problem (\ref{eqf3}).
\end{proposition}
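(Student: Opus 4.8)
The plan is to argue in two stages: first that $(\bar x,\bar z,\bar v)$ is a feasible (and loopless) solution of (\ref{eqf3}), and then that its objective value cannot exceed that of the optimal solution $(x,z,v)$, which forces equality and hence optimality.

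For feasibility, Proposition \ref{prop1} already guarantees that $\bar x$ encodes $K$ loopless $s$--$t$ paths, so it satisfies the flow-conservation constraints (\ref{eqf3c1}); moreover, since each such path is simple, no arc is repeated within a single path, so $\bar x$ is binary and fulfils the $x$-part of (\ref{eqf3c6}). By hypothesis $(\bar z,\bar v)$ satisfies (\ref{eqf3c2})--(\ref{eqf3c5}) with $\bar z$ binary, and, as noted in the discussion following the formulation, (\ref{eqf3c5}) then forces $\bar v$ to be a vector of non-negative integers. Thus $(\bar x,\bar z,\bar v)$ is feasible for (\ref{eqf3}), and it is loopless by Proposition \ref{prop1}.

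For the objective, I would first translate $f_1$ into arc-usage counts. Writing $u_{ij}=\sum_{k=1}^K x_{ij}^k$ for the number of paths of $x$ through arc $(i,j)$, the coupling constraints (\ref{eqf3c2})--(\ref{eqf3c4}) force $z_{ij}^{kl}=x_{ij}^k x_{ij}^l$ for binary $x$, whence (\ref{eqf3c5}) gives $v_{ij}=\sum_{k<l}x_{ij}^k x_{ij}^l=\binom{u_{ij}}{2}$, so that $f_1(x,z,v)=\sum_{(i,j)\in A}\binom{u_{ij}}{2}$. The same identity applied to $(\bar x,\bar z,\bar v)$ yields $f_1(\bar x,\bar z,\bar v)=\sum_{(i,j)\in A}\binom{\bar u_{ij}}{2}$, where $\bar u_{ij}=\sum_{k=1}^K \bar x_{ij}^k$.

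The crux — and what I expect to be the main obstacle — is to establish the termwise bound $\bar u_{ij}\le u_{ij}$ for every arc. This follows from the capacity mechanism of Algorithm \ref{alg:alg1}: the value $u_{ij}$ is used as the capacity of arc $(i,j)$, it is decremented by one each time the arc is selected in some $\bar x^k$, and the arc is removed from $A'$ as soon as its capacity reaches $0$. Hence arc $(i,j)$ is selected across the $K$ iterations at most $u_{ij}$ times, i.e.\ $\bar u_{ij}\le u_{ij}$. Since $r\mapsto\binom{r}{2}$ is nondecreasing on $\mathbb{N}_0$, this gives $\binom{\bar u_{ij}}{2}\le\binom{u_{ij}}{2}$ for each arc and therefore $f_1(\bar x,\bar z,\bar v)\le f_1(x,z,v)$. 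Because $(x,z,v)$ is optimal while $(\bar x,\bar z,\bar v)$ is feasible, the reverse inequality $f_1(\bar x,\bar z,\bar v)\ge f_1(x,z,v)$ also holds, so the two objective values coincide and $(\bar x,\bar z,\bar v)$ is a loopless optimal solution of (\ref{eqf3}), as claimed.
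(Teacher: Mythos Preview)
Your argument is correct and follows the same two-stage skeleton as the paper (feasibility via Proposition~\ref{prop1}, then an objective-value comparison), but the way you establish the objective bound differs from the paper's. The paper asserts the per-index inequality $x_{ij}^k\ge\bar x_{ij}^k$ for every $(i,j)\in A$ and every $k$, and from that deduces $z_{ij}^{kl}\ge\bar z_{ij}^{kl}$ and $v_{ij}\ge\bar v_{ij}$ directly. You instead pass to the aggregate arc-usage counts $u_{ij}$ and $\bar u_{ij}$, use the capacity mechanism of Algorithm~\ref{alg:alg1} to get $\bar u_{ij}\le u_{ij}$, and then translate $f_1$ into $\sum_{(i,j)}\binom{u_{ij}}{2}$ to conclude.

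Your route is in fact the more robust of the two: Algorithm~\ref{alg:alg1} computes each $\bar x^k$ as a shortest path in the residual subnetwork $(N',A')$, with no reference to the specific input path $x^k$, so the per-index inequality $x_{ij}^k\ge\bar x_{ij}^k$ need not hold (the $k$-th output path can use an arc that belonged only to some other input path). What \emph{does} hold is exactly your aggregate bound $\bar u_{ij}\le u_{ij}$, enforced by the capacity decrements, and that is all that is needed for $\binom{\bar u_{ij}}{2}\le\binom{u_{ij}}{2}$ and hence $f_1(\bar x,\bar z,\bar v)\le f_1(x,z,v)$.
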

\begin{proof}
Suppose $(\bar{x},\bar{z},\bar{v})$ is obtained from $(x,z,v)$ according to Algorithm
\ref{alg:alg1} and the directions above. Then, $x_{ij}^k,\bar{x}_{ij}^k\in\{0,1\}$
and
\begin{equation}\label{eq3}
x_{ij}^k\ge\bar{x}_{ij}^k,\ \ (i,j)\in A,\ \  k = 1,\ldots,K.
\end{equation}
Two aspects need to be considered:
\begin{enumerate}
\item
According to Proposition \ref{prop1}, $\bar{x}$ corresponds to $K$ paths,
so the constraints (\ref{eqf3c1}) hold.
Moreover, $\bar{z}$ and $\bar{v}$ satisfy (\ref{eqf3c2}) and (\ref{eqf3c5}), therefore
$(\bar{x},\bar{z},\bar{v})$ is a feasible solution of (\ref{eqf3}).
\item
Because of condition (\ref{eq3}), it also holds that
$$z_{ij}^{kl}\ge\bar{z}_{ij}^{kl},\ \ (i,j)\in A,\ \ k=1,\ldots,K-1,\ \ l=k+1,\ldots,K,$$
$$v_{ij}\ge \bar{v}_{ij},\ \ (i,j)\in A,$$
therefore $f_1(x,z,v)\ge f_1(\bar{x},\bar{z},\bar{v})$, which shows that the new solution is optimal.
\end{enumerate}
It can then be concluded that $(\bar{x},\bar{z},\bar{v})$ is an optimal loopless
solution of (\ref{eqf3}).
\end{proof}
    
In the next section we describe some computational experiments performed on formulation (\ref{eqf3}).
Results will show that formulation (\ref{eqf3}) outputs solutions with very good dissimilarity scores, indicating that problem (\ref{prob1}) might be a good approach to the $K$ dissimilar paths problem, even though it neglects the role of the length of the paths in the dissimilarity of the $K$ paths.
However, the high run times required to solve even problems of modest dimension compromise its usability in practical applications.
These results are not at all unexpected, due to the combinatorial nature of the model, but they reinforce the need for recurring to alternative models, as mentioned in Section \ref{sec:1}.

\subsection{Computational experiments\label{sec:4.1}}
The purpose of the tests presented in the following is to study the behavior of formulation (\ref{eqf3}) in terms of the solutions it outputs, its run times and its integer programming gaps.

The code designated by \texttt{MAO}, standing for the implementation for minimizing the number of arcs overlaps for each pair of paths, formulation (\ref{eqf3}), was implemented in C. This code uses IBM ILOG CPLEX version 12.7 as the ILP solver.
The Algorithm \ref{alg:alg1}, also coded in C, was applied to the result of this implementation, in order to remove the loops from the obtained solutions.
The tests were carried out on a 64-bit PC with an Intel\textregistered Core\texttrademark\ i7-6700 Quad core at 3.40GHz with 64GB of RAM.

\begin{figure}[htb]
\centering
\begin{tikzpicture}[scale=1.2]
	\filldraw [black]
	(0,0) circle [radius=2pt]
	(1,0) circle [radius=2pt]
	(2,0) circle [radius=2pt]
	(3,0) circle [radius=2pt]
	(4,0) circle [radius=2pt]
	(0,1) circle [radius=2pt]
	(1,1) circle [radius=2pt]
	(2,1) circle [radius=2pt]
	(3,1) circle [radius=2pt]
	(4,1) circle [radius=2pt]
	(0,2) circle [radius=2pt]
	(1,2) circle [radius=2pt]
	(2,2) circle [radius=2pt]
	(3,2) circle [radius=2pt]
	(4,2) circle [radius=2pt]
	(0,3) circle [radius=2pt]
	(1,3) circle [radius=2pt]
	(2,3) circle [radius=2pt]
	(3,3) circle [radius=2pt]
	(4,3) circle [radius=2pt];
    	
	\draw[->,thick] (0.1,0)--(.9,0);
	\draw[->,thick] (1.1,0)--(1.9,0);
	\draw[->,thick] (3.1,0)--(3.9,0);
	\draw[->,thick] (0.1,1)--(.9,1);
	\draw[->,thick] (1.1,1)--(1.9,1);
	\draw[->,thick] (1.1,2)--(1.9,2);
	\draw[->,thick] (1.1,3)--(1.9,3);
	\draw[->,thick] (3.1,1)--(3.9,1);
	\draw[->,thick] (3.1,2)--(3.9,2);
	\draw[->,thick] (3.1,3)--(3.9,3);
	\draw[->,thick] (0,0.9)--(0,0.1);
	\draw[->,thick] (1,0.9)--(1,0.1);
	\draw[->,thick] (2,0.9)--(2,0.1);
	\draw[->,thick] (3,0.9)--(3,0.1);
	\draw[->,thick] (4,0.9)--(4,0.1);
	\draw[->,thick] (4,2.9)--(4,2.1);
	\draw[->,thick] (0,2.9)--(0,2.1);
	\draw[->,thick] (1,2.9)--(1,2.1);
	\draw[->,thick] (2,2.9)--(2,2.1);
	\draw[->,thick] (3,2.9)--(3,2.1);
	\draw[->,thick] (0.1,2)--(.9,2);	
	\draw[->,thick] (0.1,3)--(.9,3);
	\node[draw=none] (ellipsis1) at (2.5,0) {$\ldots$};

	\node[draw=none] (ellipsis1) at (2.5,1) {$\ldots$};	
	\node[draw=none] (ellipsis1) at (2.5,2) {$\ldots$};
	\node[draw=none] (ellipsis1) at (2.5,3) {$\ldots$};		
	\node[draw=none] (ellipsis1) at (0,1.6) {$\vdots$};
	\node[draw=none] (ellipsis1) at (1,1.6) {$\vdots$};	
	\node[draw=none] (ellipsis1) at (2,1.6) {$\vdots$};
	\node[draw=none] (ellipsis1) at (3,1.6) {$\vdots$};
	\node[draw=none] (ellipsis1) at (4,1.6) {$\vdots$};		
	\draw (-0.3,3) node {$s$};
	\draw (4.3,0) node {$t$};			
\end{tikzpicture}
\caption{Grid network\label{fig:Grid}}
\end{figure}
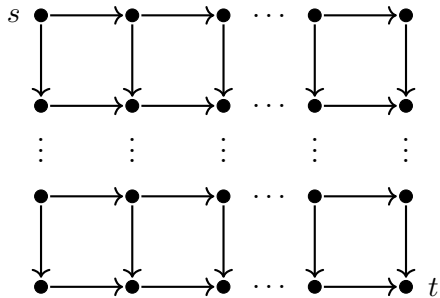

In the performed tests $K=3,4,\ldots,10$ paths were computed in directed networks.
Two types of instances were considered:
\begin{itemize}
\item
Random networks, denoted by $R_{n,m}$, with $n=100,300,500$ nodes, and $m=dn$ arcs, for average degrees $d=5,10$. A Hamiltonian cycle is created for all the nodes in the network, and afterwards the remaining arcs are generated randomly. This Hamiltonian cycle is directed, therefore the strong connectivity of the graph is not fully ensured.
There are no parallel arcs between pairs of nodes.
For each size of these networks, 30 instances were generated based on different seeds. The reported results are based only on the 22, out of the 30, that originated feasible problems.
\item
Grid networks, denoted by $G_{p,q}$, with $p=3,4,6,12$ rows, $q=6,12,36$ columns and $n=pq=36,144$ nodes, arranged in a planar grid, numbered consecutively from left to right and top to bottom -- as shown in Figure \ref{fig:Grid}.
Any pair of adjacent nodes is connected by an arc, thus $m=2pq-p-q=57,60,248,264$.
\end{itemize}
In both cases the source and the target nodes are $s=1$ and $t=n$, respectively, with no loss of generality.
It is worth noting that the considered grid networks are acyclic, therefore Algorithm~\ref{alg:alg1} was not applied for such instances.
Moreover, the run times of Algorithm \ref{alg:alg1} were small (at most 4\% of the total run times, that is, up to 50 milliseconds), therefore they are not included in the results reported in the following.

We begin by observing that in a non negligible number of the random instances, $K$ disjoint paths can be found. This happens for 274 of the 1056 instances. Table~\ref{tab:disj} exhibits the distribution of those instances.
   
\begin{table}[H]
\centering
\caption{Number of instances with $K$ disjoint paths in random networks\label{tab:disj}}
{\small
\begin{tabular}{@{}lccccccccl@{}}\toprule
	 &\multicolumn{8}{c}{$K$} \\\cmidrule(lr){2-9} 
	$R_{n,m}$& 3 & 4 & 5 & 6 & 7 & 8 & 9 & 10\\\midrule
    $R_{100,500}$  & 12 & 2 & 1 & 0  & 0  &  0  & 0 & 0  \\
    $R_{100,1000}$ & 22 & 18 & 14 & 12  & 7 & 4  & 2 & 1\\\midrule
    $R_{300,1500}$ & 14 & 7 & 1 & 0 & 0 & 0 & 0 & 0\\
    $R_{300,3000}$ & 21 & 18 & 13 & 9 & 7 & 4 & 2 & 1\\\midrule
    $R_{500,2500}$ & 14 & 4 & 3 & 1 & 1 & 0 & 0 & 0\\
    $R_{500,5000}$ & 17 & 14 & 12 & 8 & 5 & 3 & 0 & 0\\\bottomrule
\end{tabular}}
\end{table}
   
Unless otherwise stated, hereafter only the instances with no disjoint solutions are considered. There are two reasons for this:
\begin{itemize}
\item
the main goal of this work is to find good methods to obtain dissimilar paths in networks where finding those paths is not easy (finding disjoint paths can be formulated as a minimum cost flow problem \cite{Suurballe_1974,Suurballe_1984});
\item
such instances are not evenly distributed throughout the set of instances, thus their presence in different numbers in each group could skew the results.
\end{itemize}
The drawback of this decision is that each set $R_{n,m}$ now has a smaller and different number of instances. Table \ref{tab:rem} indicates the final number for each type of instances.
It should be remarked that there are no disjoint solutions in the grid instances.

\begin{table}[H]
\centering
\caption{Final number of random network instances\label{tab:rem}}
{\small
\begin{tabular}{@{}lccccccccl@{}}\toprule
		 &\multicolumn{8}{c}{$K$} \\\cmidrule(lr){2-9} 
	$R_{n,m}$	& 3 & 4 & 5 & 6 & 7 & 8 & 9 & 10\\\midrule
     $R_{100,500}$ & 10 & 20 & 21 & 22 & 22 & 22 & 22 & 22\\
    $R_{100,1000}$ & 0 & 4 & 8 & 10 & 15 & 18 & 20 & 21 \\\midrule
    $R_{300,1500}$ & 8 & 15 & 21 & 22 & 22 & 22 & 22 & 22\\
    $R_{300,3000}$ & 1 & 4 & 9 & 13 & 15 & 18 & 20 & 21 \\\midrule
    $R_{500,2500}$ & 8 & 18 & 19 & 21 & 21 & 22 & 22 & 22\\
    $R_{500,5000}$ & 5 & 8 & 10 & 14 & 17 & 19 & 22 & 22\\\bottomrule
\end{tabular}}
\end{table}

An upper bound of 300 seconds was set for the elapsed time when running the solver.
The results presented in the following for random networks are average values for solving the instances with the same characteristics except for a seed, and with no disjoint solutions.
The results shown for grid networks are based on a single instance, again with no disjoint solutions.

Approximately 84 \% of the random instances were solved to optimality (corresponding to 659 out of 782 instances) and 44 \% of the grid instances (corresponding to 14 out of 32 instances).

\begin{table}[htb]
\centering
\caption{Number of instances solved to optimality by \texttt{MAO} (\%)\label{tab:mao_opt}}
{\small
\begin{tabular}{@{}lccccccccl@{}}\toprule
		 &\multicolumn{8}{c}{$K$} \\\cmidrule(lr){2-9} 
	$R_{n,m}$	& 3 & 4 & 5 & 6 & 7 & 8 & 9 & 10\\\midrule
    $R_{100,500}$ & 100 & 100 & 100 & \colorbox{lightgray}{96} & \colorbox{lightgray}{77} & \colorbox{lightgray}{50} & \colorbox{lightgray}{27} & \colorbox{lightgray}{23} \\
    $R_{100,1000}$   &  -- & 100 & 100 & 100 & 100 & 100 & 100 & \colorbox{lightgray}{95} \\\midrule
    $R_{300,1500}$   & 100  & 100 & 100   & 100  & \colorbox{lightgray}{95} & \colorbox{lightgray}{80} & \colorbox{lightgray}{59} & \colorbox{lightgray}{41}\\
    $R_{300,3000}$ & 100 & 100 & 100 & 100  & 100 & 100 & 100 & \colorbox{lightgray}{86} \\\midrule
    $R_{500,2500}$ & 100 & 100 & 100 & 100 & \colorbox{lightgray}{86} & \colorbox{lightgray}{73} & \colorbox{lightgray}{55} & \colorbox{lightgray}{32} \\
    $R_{500,5000}$ & 100 & 100 & 100 & 100  & 100 & 100  & \colorbox{lightgray}{86} & \colorbox{lightgray}{82}\\\bottomrule
\end{tabular}\hfill
\begin{tabular}{@{}lccccccccl@{}}\toprule
		 &\multicolumn{8}{c}{$K$} \\\cmidrule(lr){2-9} 
	$G_{p,q}$ & 3 & 4 & 5 & 6 & 7 & 8 & 9 & 10\\\midrule
    $G_{3,12}$ & 100  & 100    & 100   & 100  & \colorbox{lightgray}{0} & \colorbox{lightgray}{0} & \colorbox{lightgray}{0} & \colorbox{lightgray}{0}\\
	$G_{4,36}$ & 100 & 100 & \colorbox{lightgray}{0} &  \colorbox{lightgray}{0} & \colorbox{lightgray}{0} &  \colorbox{lightgray}{0} & \colorbox{lightgray}{0} &  \colorbox{lightgray}{0}  \\
	$G_{6,6}$ & 100 & 100 & 100 & 100  & \colorbox{lightgray}{0} & \colorbox{lightgray}{0} & \colorbox{lightgray}{0} & \colorbox{lightgray}{0}\\
	$G_{12,12}$ & 100 & 100 & 100 & 100 & \colorbox{lightgray}{0} & \colorbox{lightgray}{0}  &  \colorbox{lightgray}{0} & \colorbox{lightgray}{0}\\\bottomrule
\end{tabular}}
\end{table}

Due to the combinatorial nature of the model, it would be expected that the results of its application depended heavily on the size of the network. However, according to Tables~\ref{tab:mao_opt}\footnote{There is only one grid instance of each kind, therefore, the listed values are either 0\% or 100\%.} and~\ref{tab:mao_cpu} many of the instances that were not solved within the time limit are actually associated to the smaller networks.
In fact, the results behaved as expected only for the first values of $K\leq 4$, for both the random and the grid networks. As $K$ increases, other features seem to have a more decisive influence. A finer analysis of the results allows to enumerate three possible explanations for that situation:
the layout of the network;
the sparsity of the network; and 
the relation between the number of paths $K$ and the size of the network.

The differences in the layout of the network account for the fact that grid instances are, in general, harder to solve than random instances, even though they are much smaller (recall that $36\leq n\le144$ and $57\leq m\le264$ for the grid instances, whereas $100\leq n\le500$ and $500\leq m\le5000$ for the random instances). Furthermore, even among grid networks, there are differences related to the layout, as the problem seems to be more difficult to solve in rectangular grids rather than in square grids -- see Table \ref{tab:mao_cpu} and Figure \ref{fig:mao_cpu}.

\begin{table}[htb]
\centering
\caption{Run times of \texttt{MAO} (seconds)\label{tab:mao_cpu}}
{\small
\begin{tabular}{@{}lccccccccl@{}}\toprule
		 &\multicolumn{8}{c}{$K$} \\\cmidrule(lr){2-9} 
	$R_{n,m}$	& 3 & 4 & 5 & 6 & 7 & 8 & 9 & 10\\\cmidrule(lr){1-9} 
$R_{100,500}$   & 0.124  & 0.490    & 1.135   & \colorbox{lightgray}{32.149} & \colorbox{lightgray}{76.488} & \colorbox{lightgray}{175.405}  & \colorbox{lightgray}{222.937}  & \colorbox{lightgray}{249.626}  \\
    $R_{100,1000}$   & -- & 0.854    & 3.245   & 2.910  & 5.067 & 10.116  & 18.692 & \colorbox{lightgray}{48.099}  \\\cmidrule(lr){1-9} 
    $R_{300,1500}$   & 0.391  & 1.611    & 2.637  & 10.127  & \colorbox{lightgray}{37.499} & \colorbox{lightgray}{111.854} & \colorbox{lightgray}{174.106}  & \colorbox{lightgray}{212.429}  \\
    $R_{300,3000}$   & 0.745  & 1.934    & 5.063   & 11.508  & 23.537 & 35.912  & 63.022  & \colorbox{lightgray}{132.342}  \\\cmidrule(lr){1-9} 
    $R_{500,2500}$   & 0.793  & 2.179    & 4.579  & 12.781  & \colorbox{lightgray}{66.327}  & \colorbox{lightgray}{121.559}   & \colorbox{lightgray}{196.307}  & \colorbox{lightgray}{238.856}  \\
    $R_{500,5000}$   & 1.888  & 1.491   & 7.253   & 17.309  & 34.013 & 57.627  & \colorbox{lightgray}{118.488}  & \colorbox{lightgray}{196.436}  \\\midrule
	Average & 0.788  & 1.426 & 3.985   & 14.464  & 40.488 & 85.412  & 132.258  & 179.631 & \fbox{57.306}\\\bottomrule
\end{tabular}\hfill
 \begin{tabular}{@{}lccccccccl@{}}\toprule
		 &\multicolumn{8}{c}{$K$} \\\cmidrule(lr){2-9} 
	$G_{p,q}$ & 3 & 4 & 5 & 6 & 7 & 8 & 9 & 10\\\cmidrule(lr){1-9} 
$G_{3,12}$   &  0.072  & 2.665    & 10.150 & 52.917  & \colorbox{lightgray}{300} & \colorbox{lightgray}{300}   & \colorbox{lightgray}{300}  & \colorbox{lightgray}{300}  \\
    $G_{4,36}$  & 0.218  & 0.684    &  \colorbox{lightgray}{300}   & \colorbox{lightgray}{300}  &  \colorbox{lightgray}{300} &  \colorbox{lightgray}{300}  &  \colorbox{lightgray}{300} &  \colorbox{lightgray}{300}  \\
    $G_{6,6}$   & 0.096  & 0.265 & 2.776 & 11.783  & \colorbox{lightgray}{300}  & \colorbox{lightgray}{300}   & \colorbox{lightgray}{300}  & \colorbox{lightgray}{300}  \\
    $G_{12,12}$  & 0.240  & 0.464    & 5.212   & 19.772  &  \colorbox{lightgray}{300} & \colorbox{lightgray}{300}  &  \colorbox{lightgray}{300} & \colorbox{lightgray}{300}  \\\midrule
	Average & 0.156  & 1.019  & 79.534 & 96.118 & 300   & 300   & 300   & 300 & \fbox{172.103}\\\bottomrule
\end{tabular}}
\end{table}

Moreover, the effect of the sparseness of the  network is particularly clear in the results of the random instances.
According to Figure \ref{fig:mao_cpu}, the run times decrease as the density of the network\footnote{The density of a network is defined as $m/(n(n-1))$.} increases when $K\ge5$, in all sets of instances but one, and that this factor overcomes the one of the size of the network.
The only exception is the smallest of the instances. In spite of its density, second only to $R_{100,1000}$, $R_{100,500}$ instances have proven to be, in average, the hardest of the random instances to solve. Another interesting conclusion is that the impact of sparseness in grid networks is not so significant as it is in random networks. This finding becomes evident when comparing the run times for the $G_{4,36}$ and $G_{12,12}$ networks, which have the lowest of the densities of the grid networks.

As for the third of the reasons presented: the proportion between the value of $K$ and the size of the network, finding $K$ dissimilar paths in a given network becomes more difficult as the value of $K$ grows and this difficulty is increased in very small networks. This is a plausible reason why the results obtained for the $R_{100,500}$ cases do not follow the pattern of the remaining random instances.
Moreover, it is also expected that this factor has an impact on the results for the grid instances. However, results for more grid networks would be recommended for a sound conclusion.

\begin{figure}[htp]
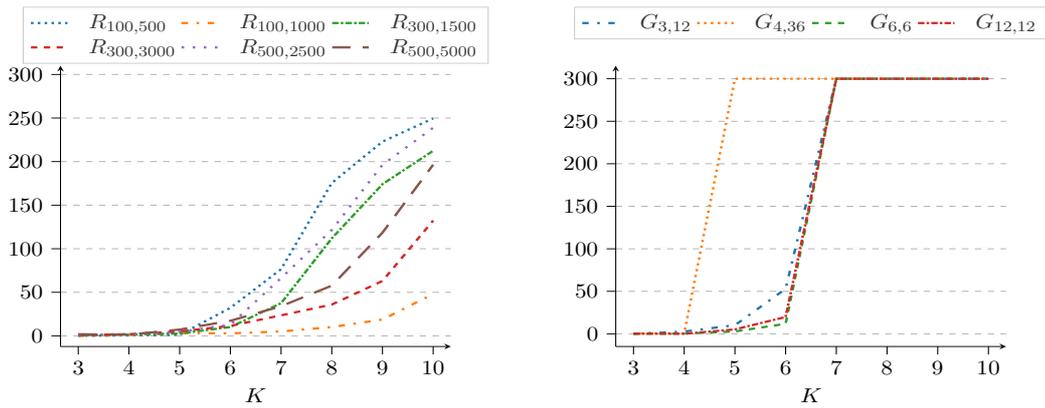

\centering
\begin{subfigure}[h]{0.45\textwidth}
\centering
	\input{Images/mao_cpu_rn.tex}	
\end{subfigure}
\begin{subfigure}[h]{0.45\textwidth}
\centering
	\input{Images/mao_cpu_gr.tex}
\end{subfigure}	
\caption{Run times of \texttt{MAO} (seconds)\label{fig:mao_cpu}}
\end{figure}
 
For the sake of completeness, the lower bounds obtained by solving the linear programming relaxation of the model are also presented. Table~\ref{tab:mao_IG} presents the average integer programming gaps, as well as the run times for solving the corresponding linear programming relaxations, for each group of instances.
The integer programming gaps are computed as
$100(f_1^*-f_{LR_1}^*)/|f_1^*|\ \%$,
where $f_1^*$ denotes the optimum value of (\ref{eqf3}) and $f_{LR_1}^*$ denotes the optimum value of its linear programming relaxation.
Whenever the optimum value $f_1^*$ is unknown, the best known integer is used to compute these gaps.
The gaps associated with the grid instances were all 100\%, therefore no such table is presented.
   
\begin{table}[H]
\centering
\caption{Average integer programming gaps of \texttt{MAO$_L$} (\%)\label{tab:mao_IG}}
{\small
\begin{tabular}{@{}lccccccccl@{}}\toprule
	&\multicolumn{8}{c}{$K$} \\\cmidrule(lr){2-9} 
	$R_{n,m}$ & 3 & 4 & 5 & 6 & 7 & 8 & 9 & 10\\\midrule
	$R_{100,500}$ & 43 & 73 & 75 & 77 & 78 & 78 & 79  & 79\\
	$R_{100,1000}$ & -- & 100 & 100 & 100 & 100 & 100 & 100  & 100\\\midrule
	$R_{300,1500}$ & 75 & 89 & 92 & 93 & 93  & 93 & 93 & 93\\
	$R_{300,3000}$ & 100 & 100 & 100 & 100 & 100 & 100 & 100 & 100\\\midrule
	$R_{500,2500}$ & 91  & 96 & 96 & 97 & 97 & 97 & 97 & 97\\
	$R_{500,5000}$ & 60 & 75 & 80 & 86 & 87 & 90 & 91 & 91\\\bottomrule
\end{tabular}}
\end{table}

As it can be observed from Table \ref{tab:mao_IG}, the linear programming relaxation of formulation (\ref{eqf3}) produces very weak lower bounds.
This often happens in models that use the same type of linking constraints (\ref{eqf3c4}) used in this formulation.

\section{Minimization of the number of repeated arcs\label{sec:5}}  
While taking into account all the overlaps in the pairs of paths in the solutions, the formulation (\ref{eqf3}) is not easy to handle from a practical point of view, as shown by the experiments reported in Section~\ref{sec:4.1}.
In the following an alternative, and simpler, approach to problem (\ref{prob1}) is introduced.

A given arc $(i,j)\in A$ is said to be repeated in a set of $K$ paths if it belongs to more than one of them.
A way to ensure that the $K$ paths are sufficiently different from each other is to consider a relaxed version of the $K$ disjoint paths problem, where instead of forbidding the occurrence of repeated arcs, the number of arcs in those conditions is minimized.
This concept was explored in the example illustrated in Figure~\ref{fig:fig2new}, given in the introductory section.
Because this approach simply privileges the number of repeated paths, it favors the solution depicted in Figure~\ref{fig:fig2newa}, rather than the one in Figure~\ref{fig:fig2newb}.

In order to model such a problem, let us consider, as before, the decision variables $x_{ij}^k$ equal to 1 if the arc $(i,j)$ lies in the $k$-th path from $s$ to $t$, or 0 otherwise, for any $(i,j)\in A$ and $k=1,\ldots,K$.
The problem of minimizing the number of repeated arcs can then be formulated as follows:
\begin{subequations}\label{eqf1}
\begin{eqnarray}
     \min && f_2(x,y)=\sum_{(i,j)\in A}y_{ij}\label{eqf1of}\\
\mbox{subject to} && \sum_{j\in N:(i,j)\in A}x_{ij}^k-\sum_{j\in N:(j,i)\in A}x_{ji}^k=\left\{
     \begin{array}{rl}
      1 & i=s\\
      0 & i\ne s,t\\
     -1 & i=t
     \end{array}\right.,\ \ \ k=1,\ldots,K\label{eqf1c1}\\
  && y_{ij}\le\sum_{k=1}^Kx_{ij}^k,\ \ \ (i,j)\in A\label{eqf1c2}\\
  && (K-1)y_{ij}\ge\sum_{k=1}^Kx_{ij}^k-1,\ \ \ (i,j)\in A\label{eqf1c3}\\
  && x_{ij}^k\in\{0,1\}, \ y_{ij} \in\{0,1\}, \ \ \ (i,j)\in A,\ \ \ k=1,\ldots,K
\end{eqnarray}
\end{subequations}
This formulation has $O(Km)$ binary variables and $O(Kn+m)$ constraints.
The constraints (\ref{eqf1c1}) are flow conservation constraints that model
$K$ paths from node $s$ to node $t$.
The constraints (\ref{eqf1c2}) and (\ref{eqf1c3}) relate the $x$ and the
$y$ variables, in a way that $y_{ij}$ is 1 if and only if the arc $(i,j)$
is used in more than one path, that is, if this arc is repeated, and 0 otherwise.
In fact, given the arc $(i,j)\in A$:
\begin{enumerate}
\item
If $x_{ij}^k=0$ for every $k=1,\ldots,K$, then by (\ref{eqf1c2}) we have $y_{ij}=0$,
whereas (\ref{eqf1c3}) has no implications on the value of $y_{ij}$.
\item
If $x_{ij}^k=1$ for exactly one $k\in\{1,\ldots,K\}$, then neither (\ref{eqf1c2}) nor (\ref{eqf1c3}) have implications on the value of $y_{ij}$.
\item
If $x_{ij}^k=1$ for more than one $k\in\{1,\ldots,K\}$, then (\ref{eqf1c2}) has
no implications on the value of $y_{ij}$, whereas by (\ref{eqf1c3}) we have
$y_{ij}=1$.
\end{enumerate}
When in situation 2., that is, if only one variable $x_{ij}^k$ has value 1, the value of $y_{ij}$ can be arbitrary.
However, the objective function minimizes the sum of all these variables and this minimization is achieved if the arbitrary $y_{ij}$'s are equal to 0, as intended.
Therefore, the objective function counts the number of repeated arcs, that is the number of arcs that are used more than once.

The same reasoning can be used to prove that the set of constraints (\ref{eqf1c2}) can be dropped.
In fact, because the goal of this formulation is to minimize $\sum_Ay_{ij}$, the values of $y_{ij}$ are 0 by default.

Just like for the formulation presented in the previous section, an optimal solution of formulation (\ref{eqf1}) may contain loops, as long as they do not include any arc that is common to several paths.
However, given any such optimal solution, Algorithm~\ref{alg:alg1} can be applied in order to remove its loops.
The arguments for proving Proposition \ref{prop2} are similar to those used for Proposition~\ref{prop4}, therefore its proof is omitted.

\begin{proposition}\label{prop2}
Let $(x,y)$ be an optimal solution for problem (\ref{eqf1}).
Let $\bar{x}\in\{0,1\}^{Km}$ be the corresponding vector output by Algorithm \ref{alg:alg1} when applied to $x$, and $\bar{y}\in\{0,1\}^m$ be such that the constraints (\ref{eqf1c2}) and (\ref{eqf1c3}) are satisfied.
Then, $(\bar{x},\bar{y})$ is a loopless optimal solution for problem (\ref{eqf1}).
\end{proposition}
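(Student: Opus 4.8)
The plan is to mirror the proof of Proposition~\ref{prop4}, substituting the repeated-arc objective $f_2$ for the overlap objective $f_1$. Let $(x,y)$ be an optimal solution of (\ref{eqf1}) and let $\bar{x}$ be the output of Algorithm~\ref{alg:alg1} applied to $x$. By Proposition~\ref{prop1}, $\bar{x}$ defines $K$ loopless paths from $s$ to $t$, so $\bar{x}$ satisfies the flow conservation constraints (\ref{eqf1c1}). I would then define $\bar{y}$ in the natural way, setting $\bar{y}_{ij}=1$ when $\sum_{k=1}^K\bar{x}_{ij}^k\ge 2$ and $\bar{y}_{ij}=0$ otherwise, which makes $(\bar{x},\bar{y})$ satisfy (\ref{eqf1c2}) and (\ref{eqf1c3}); hence $(\bar{x},\bar{y})$ is a feasible solution of (\ref{eqf1}). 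Since the text already observes that (\ref{eqf1c2}) can be dropped, the only constraint that needs genuine checking is (\ref{eqf1c3}).

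The crucial step is the monotonicity of arc usage, which plays here the role that inequality (\ref{eq3}) plays in Proposition~\ref{prop4}. In Algorithm~\ref{alg:alg1} each arc $(i,j)\in A'$ is initialized with capacity $u_{ij}=\sum_{k=1}^K x_{ij}^k$, this capacity is decremented by one every time the arc is routed into some $\bar{x}^k$, and the arc is deleted once the capacity reaches $0$. Consequently an arc can be used in the constructed paths at most $u_{ij}$ times in total, which yields, for every $(i,j)\in A$, the aggregate inequality $\sum_{k=1}^K\bar{x}_{ij}^k\le\sum_{k=1}^K x_{ij}^k$. I emphasize that, unlike in Proposition~\ref{prop4}, only this summed inequality is needed, because the objective $f_2$ depends on the arcs solely through their total multiplicity $\sum_k x_{ij}^k$ and not on which individual path index uses them; this is what makes the reindexing of paths performed by the algorithm harmless.

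From the aggregate inequality the optimality argument is immediate. If $\sum_k\bar{x}_{ij}^k\ge 2$ for some arc, then $\sum_k x_{ij}^k\ge 2$ as well, so $(i,j)$ is repeated in $x$ and (\ref{eqf1c3}) forces $y_{ij}=1$; and if $\sum_k\bar{x}_{ij}^k\le 1$, then $\bar{y}_{ij}=0$ by construction. In either case $\bar{y}_{ij}\le y_{ij}$, whence $f_2(\bar{x},\bar{y})=\sum_{(i,j)\in A}\bar{y}_{ij}\le\sum_{(i,j)\in A}y_{ij}=f_2(x,y)$. Because $(x,y)$ is optimal and $(\bar{x},\bar{y})$ is a feasible solution with objective value no larger, $(\bar{x},\bar{y})$ is also optimal; and it is loopless by Proposition~\ref{prop1}.

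The only delicate point, and the step I would write out most carefully, is the justification of the capacity inequality directly from the bookkeeping in lines~3--9 of Algorithm~\ref{alg:alg1}, together with the observation that $f_2$ is a function of the aggregate arc multiplicities. Everything else is routine and parallels Proposition~\ref{prop4}, which is presumably why the paper omits the formal proof.
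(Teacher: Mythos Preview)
Your proposal is correct and follows exactly the route the paper intends: it explicitly omits the proof, stating that ``the arguments for proving Proposition~\ref{prop2} are similar to those used for Proposition~\ref{prop4}'', and your write-up is precisely that parallel argument. Your observation that only the aggregate inequality $\sum_k\bar{x}_{ij}^k\le\sum_k x_{ij}^k$ is needed here (because $f_2$ depends on arcs only through their total multiplicity) is a useful clarification that the paper does not spell out.
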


\subsection{Computational experiments\label{sec:5.1}}
In the following, formulation (\ref{eqf1}), which minimizes the number of repeated arcs, is analyzed empirically.
The code that implements this formulation is designated by \texttt{MRA}.
It was written in C and uses IBM ILOG CPLEX version 12.7 to solve the integer programs.
The variant of the same formulation obtained by removing the constraints (\ref{eqf1c2}) was also implemented. Note that both models are valid formulations of the problem, the later being a weaker (in terms of its linear programming bound)  but smaller variant of the first. Because the differences in the run times obtained with both variants were not significant, only the original one is presented below.

The experimental setup was as described in Section~\ref{sec:4.1}.
Algorithm~\ref{alg:alg1} was applied to the results of the code \texttt{MRA} to remove the loops from the obtained solutions in the random instances.

\begin{table}[htb]
\centering
\caption{Number of instances solved to optimality by \texttt{MRA} (\%)\label{tab:MRAopt}}
\small{
\begin{tabular}{@{}lccccccccl@{}}\toprule
   			&\multicolumn{8}{c}{$K$} \\\cmidrule(lr){2-9}
  $G_{p,q}$ & 3 & 4 & 5 & 6 & 7 & 8 & 9 & 10\\\midrule
 $G_{3,12}$ & 100 & 100 & 100 & 100 & 100 & 100 & 100 & 100\\
 $G_{4,36}$ & 100 & 100 & 100 & 100 & 100 & \colorbox{lightgray}{0} & \colorbox{lightgray}{0} & \colorbox{lightgray}{0}\\
  $G_{6,6}$ & 100 & 100 & 100 & 100 & 100 & 100 & 100 & 100\\
$G_{12,12}$ & 100 & 100 & 100 & 100 & 100 & \colorbox{lightgray}{0} & \colorbox{lightgray}{0} & \colorbox{lightgray}{0}\\\bottomrule
\end{tabular}}
\end{table}

\texttt{MRA} was able to solve to optimality all the random network instances, within the 300 seconds time window.
Additionally, Table~\ref{tab:MRAopt} shows the same number but for grids.
It was possible to solve 81\% of these instances, corresponding to 26 out of the 32 instances.
The interruptions after 300 seconds occurred in problems of finding 8 or more paths in the 144 node grids.
Finally, it is worth noting that even though formulations (\ref{eqf3}) and (\ref{eqf1}) model different problems it is still of interest to compare them, since they both are relaxations of the $K$ dissimilar paths problem, and that far more instances were solved by \texttt{MRA} than by \texttt{MAO}.

\begin{table}[htb]
\centering
\caption{Run times of \texttt{MRA} (seconds)\label{tab:MRAcpu0}}
\small{
\begin{tabular}{@{}lccccccccl@{}}\toprule
       &\multicolumn{8}{c}{$K$} \\\cmidrule(lr){2-9}
     $R_{n,m}$ & 3 & 4 & 5 & 6 & 7 & 8 & 9 & 10\\\cmidrule(lr){1-9}
 $R_{100,500}$ & 0.051 & 0.061 & 0.098 & 0.173 & 0.254 & 0.657 & 0.981 & 2.092\\
$R_{100,1000}$ & -- & 0.108 & 0.145 & 0.189 & 0.206 & 0.351 & 0.373 & 0.385\\\cmidrule(lr){1-9}
$R_{300,1500}$ & 0.138 & 0.204 & 0.279 & 0.312 & 0.432 & 0.539 & 1.057 & 2.144\\
$R_{300,3000}$ & 0.280 & 0.347 & 0.411 & 0.485 & 0.614 & 0.723 & 0.855 & 0.949\\\cmidrule(lr){1-9}
$R_{500,2500}$ & 0.256 & 0.323 & 0.457 & 0.557 & 0.723 & 0.964 & 1.458 & 1.586\\
$R_{500,5000}$ & 0.394 & 0.542 & 0.770 & 0.944 & 1.074 & 1.271 & 1.488 & 1.836\\\midrule
       Average & 0.223  & 0.261 & 0.360 & 0.443  & 0.550 & 0.750  & 1.053  & 1.498 & \fbox{0.640}\\\bottomrule
\end{tabular}\hfill
\begin{tabular}{@{}lccccccccl@{}}\toprule
    &\multicolumn{8}{c}{$K$} \\\cmidrule(lr){2-9}
  $G_{p,q}$ & 3 & 4 & 5 & 6 & 7 & 8 & 9 & 10\\\cmidrule(lr){1-9}
 $G_{3,12}$ & 0.054 & 0.154 & 0.455 & 0.859 &   0.696 &   0.902 &   1.078 & 1.235\\
 $G_{4,36}$ & 0.103 & 0.107 & 1.833 & 5.047 & 221.466 & \colorbox{lightgray}{300} & \colorbox{lightgray}{300} & \colorbox{lightgray}{300}\\
  $G_{6,6}$ & 0.019 & 0.093 & 0.327 & 1.200 &  11.392 &  3.075 &   2.176 & 1.441\\
$G_{12,12}$ & 0.166 & 0.239 & 1.070 & 4.359 &  47.101 & \colorbox{lightgray}{300} & \colorbox{lightgray}{300} & \colorbox{lightgray}{300}\\\midrule
    Average & 0.085  & 0.148 & 0.921 & 2.866  & 70.163 & 150.994  & 150.813  & 150.669 & \fbox{65.832}\\\bottomrule
\end{tabular}}
\end{table}

The run times of code \texttt{MRA} are summarized in Table~\ref{tab:MRAcpu0} and depicted in Figure~\ref{fig:mra_cpu}.
The results were particularly sensitive to the number of paths. However, other features have negative repercussions as well.

First, the results depend greatly on the layout of the network. In fact, unsolved instances were only found solo in the case of grids. Furthermore, the magnitude of the run times associated to the solved grid instances is several times higher than the run times for solving the random instances (up to 221.5 seconds in the former case and to 2.2 seconds in the latter).
Another evidence of the difficulty associated to solving the grid instances, is the fact that between 4 and 256 arcs have to be repeated in these instances, against between 1 and 24 for random networks.
In this way, both formulations (\ref{eqf3}) and (\ref{eqf1}) are very sensitive to the layout of the network and both work worse in the case of grid networks.
In the case of the grid networks, the run times of \texttt{MRA} vary both with the shape of the grid and with $K$. Further conclusions would require more exhaustive tests.

Second, as made evident by analyzing Table~\ref{tab:MRAcpu0} and Figure~\ref{fig:mra_cpu}, the patter behaviour of the \texttt{MRA} run times for random networks changes for $K\ge 7$. Prior to that value, the run times vary with the size of the networks; afterwards other factors become dominant. This type of behavior was identified in Section \ref{sec:4.1}, when analyzing \texttt{MAO} results. Then, the density of the network and the proportion between the value of $K$ and the size of the network were identified as determinant factors  of the observed deviation. The corresponding \texttt{MRA} results, seem to be affected in a similar way -- the density of the network smooths the growth of the run times, whereas the increase of $K$ causes abrupt increases from a certain threshold for the $R_{100,500}$ and $R_{300,1500}$ instances.

\begin{figure}[htb]
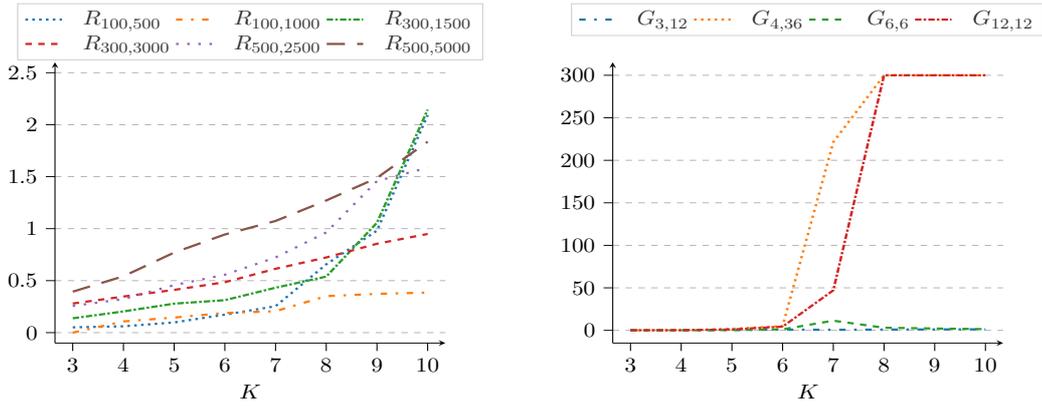

\centering
\begin{subfigure}[h]{0.45\textwidth}
	\centering
	\input{Images/mra_cpu_rn.tex}	
\end{subfigure}
\begin{subfigure}[h]{0.45\textwidth}
	\centering
	\input{Images/mra_cpu_gr.tex}
\end{subfigure}	
\caption{Run times of \texttt{MRA} (seconds)\label{fig:mra_cpu}}
\end{figure}

\begin{table}[htb]
\centering
\caption{Average integer programming gaps of \texttt{MRA$_L$} (\%)\label{tab:MRAgap}}
\small{
\begin{tabular}{@{}lccccccccl@{}}\toprule
       &\multicolumn{8}{c}{$K$} \\\cmidrule(lr){2-9}
     $R_{n,m}$ & 3 & 4 & 5 & 6 & 7 & 8 & 9 & 10\\\midrule
 $R_{100,500}$ & 10 & 40 & 36 & 38 & 36 & 42 & 43 & 43\\
$R_{100,1000}$ & -- & 12 & 23 & 25 & 40 & 46 & 48 & 49\\\midrule
$R_{300,1500}$ & 14 & 33 & 45 & 41 & 39 & 39 & 47 & 47\\
$R_{300,3000}$ &  2 & 11 & 25 & 35 & 38 & 44 & 49 & 47\\\midrule
$R_{500,2500}$ & 17 & 42 & 35 & 37 & 35 & 42 & 45 & 45\\
$R_{500,5000}$ &  7 & 14 & 17 & 30 & 38 & 41 & 48 & 42\\\bottomrule
\end{tabular}\hfill
\begin{tabular}{@{}lccccccccl@{}}\toprule
    &\multicolumn{8}{c}{$K$} \\\cmidrule(lr){2-9}
  $G_{p,q}$ &  3 &  4 & 5 & 6 & 7 & 8 & 9 & 10\\\midrule
 $G_{3,12}$ & 50 & 61 & 50 & 46 & 41 & 37 & 35 & 32\\
 $G_{4,36}$ & 50 & 67 & 72 & 59 & 51 & 46 & 41 & 37\\
  $G_{6,6}$ & 50 & 67 & 67 & 70 & 70 & 66 & 60 & 56\\
$G_{12,12}$ & 50 & 67 & 67 & 70 & 70 & 71 & 71 & 72\\\bottomrule
\end{tabular}}
\end{table}

Finally, Table \ref{tab:MRAgap} presents the average integer programming gaps determined by the lower bound produced by the linear relaxation of the formulation (\ref{eqf1}). These gaps are computed as explained in Section~\ref{sec:4.1}.
In the random instances the gap values are at most 49\%.
The gaps are even bigger for the grid networks, between 32\% ad 72\%.

\section{Minimization of the number of arc repetitions\label{sec:6}}  
The goal of formulation (\ref{eqf1}), presented in the previous section, is to minimize the number of arcs which are repeated in the solutions.
The undesired effect of this single objective may be that few arcs appear in many different paths. This situation is illustrated in Figure~\ref{fig:fig2new}.
Another example is depicted by the two sets of $K=3$ paths in Figure~\ref{fig:fig3new}. Both the solutions in Figure~\ref{fig:fig3new} have a single repeated arc. However, the paths in the solution in Figure~\ref{fig:fig3newb} are more dissimilar than the paths in Figure~\ref{fig:fig3newa}, the reason being that in the first case the arc $(i,j)$, which is repeated, appears only twice, while it appears in all the three paths in the latter case.
In the following two approaches are presented which intend to model a more complete understanding of how dissimilar paths should look like.

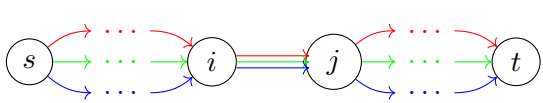
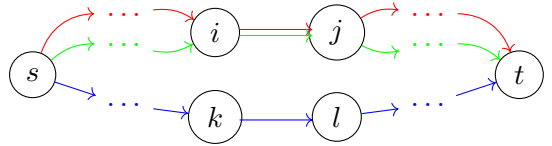
\begin{figure}[htb]
\centering
\begin{subfigure}[h]{0.45\textwidth}
\centering
\begin{tikzpicture}
     [scale=.8,every node/.style={circle,text=black,draw,minimum size=0.5cm}]
     \node (a) at (0,0) [] {$s$};
     \node (b) at (3,0) [] {$i$};
     \node (c) at (5,0) [] {$j$};
     \node (d) at (8,0) [] {$t$};

     \node[draw=none,red] (ellipsis1) at (1.5,0.5) {$\ldots$};
     \node[draw=none,green] (ellipsis2) at (1.5,0) {$\ldots$};
     \node[draw=none,blue] (ellipsis3) at (1.5,-0.5) {$\ldots$};

     \node[draw=none,red] (ellipsis4) at (6.5,0.5) {$\ldots$};
     \node[draw=none,green] (ellipsis5) at (6.5,0) {$\ldots$};
     \node[draw=none,blue] (ellipsis6) at (6.5,-0.5) {$\ldots$};

     \draw (c) edge[->,bend left=20,red] (ellipsis4);
     \draw (ellipsis4) edge[->,bend left=20,red] (d);
     \draw (c) edge[->,green] (ellipsis5);
     \draw (ellipsis5) edge[->,green] (d);
     \draw (c) edge[->,bend right=20,blue] (ellipsis6);
     \draw (ellipsis6) edge[->,bend right=20,blue] (d);
     \draw (a) edge[->,bend left=20,red] (ellipsis1);
     \draw (ellipsis1) edge[->,bend left=20,red] (b);
     \draw (a) edge[->,bend right=20,blue] (ellipsis3);
     \draw (ellipsis3) edge[->,bend right=20,blue] (b);
     \draw (a) edge[->,green] (ellipsis2);
     \draw (ellipsis2) edge[->,green] (b);

     \draw[->,red] (3.4,0.1)--(4.6,0.1);
     \draw[->,green] (3.4,0)--(4.6,0);
     \draw[->,blue] (3.4,-0.1)--(4.6,-0.1);
\end{tikzpicture}
\caption{Solution with one repeated arc, shared by several paths\label{fig:fig3newa}}
\end{subfigure}\hfill
\begin{subfigure}[h]{0.45\textwidth}
\centering
\begin{tikzpicture}
     [scale=.8,every node/.style={circle,text=black,draw,minimum size=0.5cm}]
     \node (a) at (0,0) [] {$s$};
     \node (b) at (3,0.7) [] {$i$};
     \node (c) at (5,0.7) [] {$j$};
     \node (d) at (8,0) [] {$t$};
     \node (e) at (3,-0.7) [] {$k$};
     \node (f) at (5,-0.7) [] {$l$};

     \node[draw=none,green] (ellipsis1) at (1.5,0.5) {$\ldots$};
     \node[draw=none,red] (ellipsis2) at (1.5,1) {$\ldots$};
     \node[draw=none,blue] (ellipsis3) at (1.5,-0.5) {$\ldots$};

     \node[draw=none,green] (ellipsis5) at (6.5,0.5) {$\ldots$};
     \node[draw=none,red] (ellipsis6) at (6.5,1) {$\ldots$};
     \node[draw=none,blue] (ellipsis7) at (6.5,-0.5) {$\ldots$};

     \draw (c) edge[->,bend left=20,red] (ellipsis6);
     \draw (ellipsis6) edge[->,bend left=35,red] (d);
     \draw (f) edge[->,blue] (ellipsis7);
     \draw (ellipsis7) edge[->,blue] (d);
     \draw (c) edge[->,bend right=20,green] (ellipsis5);
     \draw (ellipsis5) edge[->,bend left=20,green] (d);
     \draw (a) edge[->,bend left=35,red] (ellipsis2);
     \draw (ellipsis2) edge[->,bend left=20,red] (b);
     \draw (a) edge[->,blue] (ellipsis3);
     \draw (ellipsis3) edge[->,blue] (e);
     \draw (a) edge[->,bend left=20,green] (ellipsis1);
     \draw (ellipsis1) edge[->,bend right=20,green] (b);

     \draw[->,red] (3.4,0.75)--(4.6,0.75);
     \draw[->,green] (3.4,0.65)--(4.6,0.65);
     \draw[->,blue] (3.4,-0.75)--(4.6,-0.75);
\end{tikzpicture}
\caption{Solution with two repeated arcs, shared by few paths\label{fig:fig3newb}}
\end{subfigure}
\caption{Different sets of $K=3$ paths\label{fig:fig3new}}
\end{figure}

According to the example above, the way how paths spread in a network is affected by the number of repeated arcs as well as by the number of times that the repeated arcs appear in the paths.
This issue will be addressed with two approaches in the following.
First, by minimizing the number of times the repeated arcs appear in the set of the paths.
Later, by minimizing the number of times they are repeated, which accounts also for the number of paths where they appear.

The number of occurrences of the arc $(i,j)\in A$ in a set of $K$ paths $P_K$ is defined as:
$$Occ(i,j;P_K)=\left\{\begin{array}{cl}
0 & \text{if } |\{p\in P_k: (i,j)\in p\}|\le1\\
|\{p\in P_k: (i,j)\in p\}| & \text{otherwise}\
\end{array}
\right.$$
and the number of repeated arc occurrences in $P_k$ is given by
$$RO(P_K)=\sum_{(i,j)\in A}Occ(i,j;P_K).$$
This value is $RO(P_3)=3$ in Figure~\ref{fig:fig3newa}, and $RO(P'_3)=2$ in Figure~\ref{fig:fig3newb}, which makes the solution in the second plot better than the first with respect to the number of repeated arc occurrences.
The purpose of the next formulation is to find a set of $K$ paths which minimizes the number of repeated arc occurrences, $RO(P_K)$.

Let $x_{ij}^k$ be decision variables defined as before and let us consider the formulation:
\begin{subequations}\label{eqf2}
\begin{eqnarray}
\min && f_3(x,y,u)=\sum_{(i,j)\in A}u_{ij}\\
\mbox{subject to} && \sum_{j\in N:(i,j)\in A}x_{ij}^k-\sum_{j\in N:(j,i)\in A}x_{ji}^k=\left\{
     \begin{array}{rl}
      1 & i=s\\
      0 & i\ne s,t\\
     -1 & i=t
     \end{array}\right.,\ \ \ k=1,\ldots,K\label{eqf2c1}\\
  && y_{ij}\le\sum_{k=1}^Kx_{ij}^k,\ \ \ (i,j)\in A\label{eqf2c2}\\
  && (K-1)y_{ij}\ge\sum_{k=1}^Kx_{ij}^k-1,\ \ \ (i,j)\in A\label{eqf2c3}\\
	&& u_{ij} \le K\,y_{ij},\ \ \ (i,j)\in A\label{eqf2c4}\\
	&& u_{ij} \le \sum_{k=1}^K x_{ij}^k,\ \ \ (i,j)\in A\label{eqf2c5}\\
	&& u_{ij} \ge y_{ij}+\sum_{k=1}^K x_{ij}^k-1,\ \ \ (i,j)\in A\label{eqf2c6}\\
	&& x_{ij}^k\in\{0,1\}, y_{ij}\in\{0,1\}, u_{ij}\in\mathbb{N}_0, \ \ \ (i,j)\in A,\ \ \ k=1,\ldots,K\label{eqf2c7}
\end{eqnarray}
\end{subequations}
This formulation has $O(Km)$ variables and $O(Kn+m)$ constraints.
For any $(i,j)\in A$, the variable $y_{ij}$ is defined as in the previous formulation.
Additionally, for a given $(i,j)\in A$:
\begin{itemize}
\item
If $x_{ij}^k=0$ for any $k=1,\ldots,K$, then the constraints (\ref{eqf2c5}) imply that $u_{ij}=0$.
\item
If $x_{ij}^k=1$ for exactly one $k\in\{1,\ldots,K\}$, then by constraints
(\ref{eqf2c5}), $u_{ij}\le1$, and by constraints (\ref{eqf2c6}), $u_{ij}\ge0$.
Because the goal of the problem is to minimize $\sum_Au_{ij}$, then $u_{ij}=0$.
\item
If $x_{ij}^k=1$ for more than one $k\in\{1,\ldots,K\}$, because in the last section we saw that then $y_{ij}=1$, then by constraints (\ref{eqf2c5}), $u_{ij}\le\sum_{k=1}^Kx_{ij}^k$, and by constraints (\ref{eqf2c6}), $u_{ij}\ge\sum_{k=1}^Kx_{ij}^k$. Combining the two conditions implies that $u_{ij}=\sum_{k=1}^Kx_{ij}^k$.
\end{itemize}
Thus, the variable $u_{ij}$ counts the number of times the arc $(i,j)$
appears in the solution, or is equal to 0 if $(i,j)$ is not repeated in
that solution, for any $(i,j)\in A$.
According to these points it can also be concluded that the variables $u_{ij}$ can be relaxed as $u_{ij}\ge0$, without changing the solution.
The constraints (\ref{eqf2c4}) and (\ref{eqf2c5}) can be dropped, because by default the minimization of the objective function implies that $u_{ij}=0$.
Moreover, also the constraints (\ref{eqf2c2}) can be skipped because the variables $y_{ij}$ are only useful when the arc $(i,j)$ appears in more than one path and this constraint is not affected in that case.

\begin{figure}[htb]
\centering
\begin{subfigure}[h]{\textwidth}
\centering
\begin{tikzpicture}
     [scale=.8,every node/.style={circle,text=black,draw,minimum size=0.5cm}]
     \node (a) at (0,0) [] {$s$};
     \node (b) at (3,0) [] {$i$};
     \node (c) at (5,0) [] {$j$};
     \node (d) at (8,0) [] {$t$};

     \node[draw=none,green] (ellipsis1) at (1.5,0.5) {$\ldots$};
     \node[draw=none,red] (ellipsis2) at (1.5,1) {$\ldots$};
     \node[draw=none,blue] (ellipsis3) at (1.5,-1) {$\ldots$};
     \node[draw=none,orange] (ellipsis4) at (1.5,-0.5) {$\ldots$};

     \node[draw=none,green] (ellipsis5) at (6.5,0.5) {$\ldots$};
     \node[draw=none,red] (ellipsis6) at (6.5,1) {$\ldots$};
     \node[draw=none,blue] (ellipsis7) at (6.5,-1) {$\ldots$};
     \node[draw=none,orange] (ellipsis8) at (6.5,-0.5) {$\ldots$};

     \draw (c) edge[->,bend left=35,red] (ellipsis6);
     \draw (ellipsis6) edge[->,bend left=35,red] (d);
     \draw (c) edge[->,bend right=35,blue] (ellipsis7);
     \draw (ellipsis7) edge[->,bend right=35,blue] (d);
     \draw (c) edge[->,bend left=20,green] (ellipsis5);
     \draw (ellipsis5) edge[->,bend left=20,green] (d);
     \draw (c) edge[->,bend right=20,orange] (ellipsis8);
     \draw (ellipsis8) edge[->,bend right=20,orange] (d);
    \draw (a) edge[->,bend left=35,red] (ellipsis2);
    \draw (ellipsis2) edge[->,bend left=35,red] (b);
    \draw (a) edge[->,bend right=35,blue] (ellipsis3);
    \draw (ellipsis3) edge[->,bend right=35,blue] (b);
    \draw (a) edge[->,bend left=20,green] (ellipsis1);
    \draw (ellipsis1) edge[->,bend left=20,green] (b);
    \draw (a) edge[->,bend right=20,orange] (ellipsis4);
    \draw (ellipsis4) edge[->,bend right=20,orange] (b);

    \draw[->,red] (3.4,0.1)--(4.6,0.1);
    \draw[->,green] (3.4,0)--(4.6,0);
    \draw[->,blue] (3.4,-0.2)--(4.6,-0.2);
    \draw[->,orange] (3.4,-0.1)--(4.6,-0.1);
\end{tikzpicture}
\caption{Solution with one repeated arc, shared by several paths\label{fig:fig4newa}}
\end{subfigure}

\begin{subfigure}[h]{\textwidth}
\centering
\begin{tikzpicture}
    [scale=.8,every node/.style={circle,text=black,draw,minimum size=0.5cm}]
    \node (a) at (0,0) [] {$s$};
    \node (b) at (3,0.7) [] {$i$};
    \node (c) at (5,0.7) [] {$j$};
    \node (e) at (8,0.7) [] {$k$};
    \node (f) at (10,0.7) [] {$l$};
    \node (d) at (13,0) [] {$t$};

    \node[draw=none,red] (ellipsis1) at (1.5,1) {$\ldots$};
    \node[draw=none,red] (ellipsis3) at (6.5,1) {$\ldots$};
    \node[draw=none,red] (ellipsis5) at (11.5,1) {$\ldots$};
    \node[draw=none,green] (ellipsis2) at (1.5,0.5) {$\ldots$};
    \node[draw=none,green] (ellipsis4) at (6.5,0.5) {$\ldots$};
    \node[draw=none,green] (ellipsis6) at (11.5,0.5) {$\ldots$};
    \node[draw=none,orange] (ellipsis7) at (6.5,-0.25) {$\ldots$};
    \node[draw=none,blue] (ellipsis8) at (6.5,-0.75) {$\ldots$};

    \draw (a) edge[->,bend left=35,red] (ellipsis1);
    \draw (ellipsis1) edge[->,bend left=20,red] (b);
    \draw (c) edge[->,bend left=20,red] (ellipsis3);
    \draw (ellipsis3) edge[->,bend left=20,red] (e);
    \draw (f) edge[->,bend left=20,red] (ellipsis5);
    \draw (ellipsis5) edge[->,bend left=20,red] (d);

    \draw (a) edge[->,bend left=20,green] (ellipsis2);
    \draw (ellipsis2) edge[->,bend right=20,green] (b);
    \draw (c) edge[->,bend right=20,green] (ellipsis4);
    \draw (ellipsis4) edge[->,bend right=20,green] (e);
    \draw (f) edge[->,bend right=20,green] (ellipsis6);
    \draw (ellipsis6) edge[->,bend left=20,green] (d);

    \draw (a) edge[->,bend left=-20,orange] (ellipsis7);
    \draw (ellipsis7) edge[->,bend left=-20,orange] (d);

    \draw (a) edge[->,bend left=-35,blue] (ellipsis8);
    \draw (ellipsis8) edge[->,bend left=-35,blue] (d);

    \draw[->,red] (3.4,0.75)--(4.6,0.75);
    \draw[->,red] (8.4,0.75)--(9.6,0.75);
    \draw[->,green] (3.4,0.65)--(4.6,0.65);
    \draw[->,green] (8.4,0.65)--(9.6,0.65);
\end{tikzpicture}
\caption{Solution with two repeated arcs, shared by one path\label{fig:fig4newb}}
\end{subfigure}
\caption{Different sets of $K=4$ paths\label{fig:fig4new}}
\end{figure}
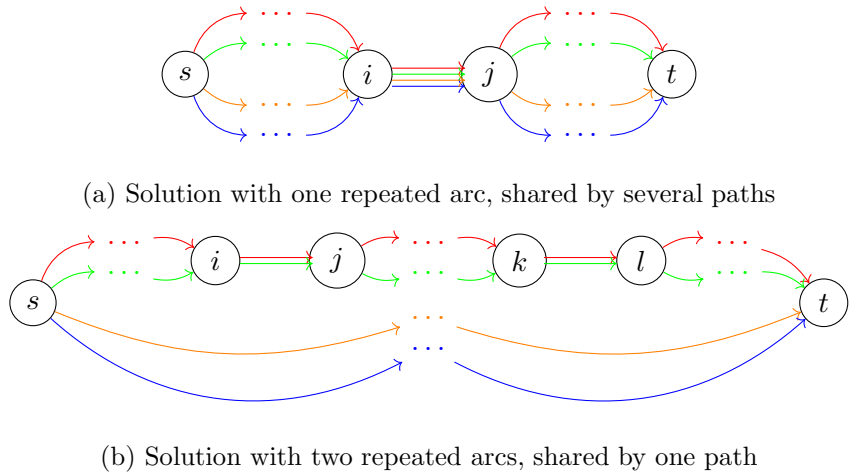

In the case shown in Figure~\ref{fig:fig4new}, counting the number of times that the repeated arcs appear in the solution is not enough to distinguish between the two depicted solutions.
In fact, Figures~\ref{fig:fig4newa} and~\ref{fig:fig4newb} we have $RO(P_4)=RO(P'_4)=4$, where the repeated arcs are $(i,j)$ in the first case, and $(i,j)$ and $(k,l)$ in the second.
Nevertheless, the fact that the repetitions happen in different arcs should be valued, given that this is reflected in the dissimilarity of these two sets of paths.
Therefore, a new concept will be introduced, the number of repetitions for any arc that appears in the solution more than once.

Let the number of repetitions of an arc $(i,j)\in A$ in the set $P_K$ be the number of paths it belongs to, excluding its first utilization, that is,
$$Rep(i,j;P_K)=\left\{\begin{array}{cl}
      0 & \text{if } |\{p\in P_k: (i,j)\in p\}|\le1\\
Occ(i,j;P_K)-1 & \text{otherwise}\
\end{array}
\right.$$
Then, the number of arc repetitions in $P_k$ is given by
$$Rep(P_K)=\sum_{(i,j)\in A}Rep(i,j;P_K).$$

The number of arc repetitions defined above reflects two aspects: the number of arcs shared by more than one paths as well as the number of paths that share them.
Recalling the example in Figure~\ref{fig:fig2new}, for the solution $P_4$ in Figure~\ref{fig:fig2newa} we have $Rep(P_4)=3$, because the arc $(i,j)$ is repeated 3 times, whereas for the solution $P'_4$ in Figure~\ref{fig:fig2newb} we have $Rep(P'_4)=2$, because both the arcs $(i,j)$ and $(k,l)$ are repeated once.
The next formulation aims at minimizing $Rep(P_K)$.

Like before, for modeling the problem of finding $K$ paths from $s$ to $t$ which minimize the number of arc repetitions, the variables $x_{ij}^k$ represent $K$ paths and have value 1 if the arc $(i,j)$ is in the $k$-th path from $s$ to $t$ or are 0 otherwise, for any $(i,j)\in A$. The formulation is as follows:
\begin{subequations}\label{eqf4}
\begin{eqnarray}
     \min && f_4(x,w,u)=\sum_{(i,j)\in A}u_{ij}\label{eqf4of}\\
\mbox{subject to} && \sum_{j\in N:(i,j)\in A}x_{ij}^k-\sum_{j\in N:(j,i)\in A}x_{ji}^k=\left\{
     \begin{array}{rl}
      1 & i=s\\
      0 & i\ne s,t\\
     -1 & i=t
     \end{array}\right.,\ \ \ k=1,\ldots,K\label{eqf4c1}\\
  && x_{ij}^k\le w_{ij},\ \ \ (i,j)\in A,\ \ \ k=1,\ldots,K\label{eqf4c2}\\
  && w_{ij}\le\sum_{k=1}^Kx_{ij}^k,\ \ \ (i,j)\in A\label{eqf4c3}\\
  && u_{ij}=\sum_{k=1}^Kx_{ij}^k-w_{ij},\ \ \ (i,j)\in A\label{eqf4c4}\\
  && x_{ij}^k\in\{0,1\}, \ w_{ij}\in\{0,1\}, \ u_{ij}\ge 0 \ \ \ (i,j)\in A,\ \ \ k=1,\ldots,K\label{eqf4c5}
\end{eqnarray}
\end{subequations}

This formulation has $O(Km)$ variables and $O(K(m+n))$ constraints. The constraints (\ref{eqf4c1}) are flow conservation constraints that define a set of $K$ paths from node $s$ to node $t$. The constraints (\ref{eqf4c2}) and (\ref{eqf4c3}) are used to define the variables $w_{ij}\in \{0, 1\}$, each one equal to 1 if and only if the arc $(i,j)$ is used in at least one path, or 0 otherwise, for any $(i,j)\in A$.
Additionally, the constraints (\ref{eqf4c4}) define the auxiliary variables $u_{ij}$, which corresponds to the number of times that arc $(i,j) \in A$ is repeated in different paths. For a given $(i,j)\in A$:
\begin{itemize}
\item
If $x_{ij}^k=0$ for any $k=1,\ldots,K$, then the constraints (\ref{eqf4c3}) imply that $w_{ij}=0$. Therefore the constraints (\ref{eqf4c4}) imply that $u_{ij}=0$.
\item
If $x_{ij}^k=1$ for exactly one $k\in\{1,\ldots,K\}$, then by constraints (\ref{eqf4c2}), $w_{ij}\ge1$ which together with (\ref{eqf4c5}) imply $w_{ij}=1$, and by constraints (\ref{eqf4c4}), $u_{ij}=0$.
\item
If $x_{ij}^k=1$ for more than one $k\in\{1,\ldots,K\}$, then by constraints (\ref{eqf4c2}), $w_{ij}\ge1$ which together with (\ref{eqf4c5}) imply that $w_{ij}=1$, and by constraints (\ref{eqf4c4}), $u_{ij}=\sum_{k=1}^Kx_{ij}^k-1$. 
\end{itemize}

Because both $x_{ij}^k$ and $w_{ij}$ are binary variables, the variables $u_{ij}$ are implicitly defined as integers for any $(i,j) \in A$, $k=1,\ldots,K$. Additionally, (\ref{eqf4c4}) together with the  non-negative constraints of the variables $u_{ij}$
imply (\ref{eqf4c3}).
Therefore, the constraints (\ref{eqf4c3}) can be skipped from the formulation.

Finally, we observe that constraints (\ref{eqf4c2}) can be aggregated as
\begin{equation}\label{eqf4c2a}
\sum_{k=1}^Kx_{ij}^k\le K\,w_{ij},\ \ \ (i,j)\in A.
\end{equation}

Like for formulations (\ref{eqf3}) and (\ref{eqf1}), both formulations (\ref{eqf2}) and (\ref{eqf4}) admit optimal solutions with loops. However, in such cases Proposition \ref{prop3} holds and the Algorithm~\ref{alg:alg1} can be applied in order to obtain loopless optimal solutions.

\begin{proposition}\label{prop3}
\begin{enumerate}
\item
Let $(x,y,u)$ be an optimal solution for problem (\ref{eqf2}).
Let $\bar{x}\in\{0,1\}^{Km}$ be the vector output by Algorithm~\ref{alg:alg1} when applied
to $x$, $\bar{y}\in\{0,1\}^m$ and $\bar{u}\in\mathbb{N}_0^m$ be such that the constraints (\ref{eqf2c2}) to (\ref{eqf2c7}) are satisfied.
Then, $(\bar{x},\bar{y},\bar{u})$ is a loopless optimal solution for
problem (\ref{eqf2}).
\item
Let $(x,w,u)$ be an optimal solution for problem (\ref{eqf4}).
Let $\bar{x}\in\{0,1\}^{Km}$ be the vector output by Algorithm~\ref{alg:alg1} when applied to $x$, and $\bar{w}\in\{0,1\}^m$ and $\bar{u}$ be such that the constraints (\ref{eqf4c2}) to (\ref{eqf4c5}) are satisfied.
Then, $(\bar{x},\bar{w},\bar{u})$ is a loopless optimal solution for
problem (\ref{eqf4}).
\end{enumerate}
\end{proposition}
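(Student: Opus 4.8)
The plan is to reuse, almost verbatim, the argument behind Proposition~\ref{prop4}, because both formulations (\ref{eqf2}) and (\ref{eqf4}) share the feature that made that proof work: their objective functions depend on the path variables only through the per-arc aggregate usage $c_{ij}=\sum_{k=1}^Kx_{ij}^k$. The one structural property of Algorithm~\ref{alg:alg1} that I would isolate first is the aggregate monotonicity
\[
\bar{c}_{ij}:=\sum_{k=1}^K\bar{x}_{ij}^k\ \le\ \sum_{k=1}^Kx_{ij}^k=:c_{ij},\qquad(i,j)\in A .
\]
This holds because the algorithm sets each arc's capacity to $u_{ij}=\sum_{k=1}^Kx_{ij}^k$, decrements it at every use, and deletes the arc once the capacity hits $0$; hence the $K$ extracted paths together traverse $(i,j)$ at most $c_{ij}$ times. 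Note that, unlike in Proposition~\ref{prop4}, I only need this aggregate bound and not a per-index inequality, since neither objective distinguishes which path uses an arc.

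For feasibility I would invoke Proposition~\ref{prop1}: $\bar{x}$ defines $K$ loopless $s$--$t$ paths, so the flow conservation constraints (\ref{eqf2c1}), respectively (\ref{eqf4c1}), hold. The remaining auxiliary variables are then taken to be the values induced by $\bar{x}$ through the linking constraints. For formulation (\ref{eqf4}), constraints (\ref{eqf4c2})--(\ref{eqf4c3}) force $\bar{w}_{ij}=\min\{\bar{c}_{ij},1\}$ and (\ref{eqf4c4}) forces $\bar{u}_{ij}=\bar{c}_{ij}-\bar{w}_{ij}$, so the completion is unique and feasible exactly as in Proposition~\ref{prop4}. For formulation (\ref{eqf2}) I would choose $\bar{y}$ and $\bar{u}$ as their minimizing values, that is $\bar{y}_{ij}=1$ iff $\bar{c}_{ij}\ge2$ and $\bar{u}_{ij}=Occ(i,j;\bar{P}_K)$, and check that these satisfy (\ref{eqf2c2})--(\ref{eqf2c7}).

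Optimality then reduces to a per-arc comparison. In formulation (\ref{eqf4}), (\ref{eqf4c4}) gives the closed form $u_{ij}=c_{ij}-\min\{c_{ij},1\}=\max\{c_{ij}-1,0\}$, and in formulation (\ref{eqf2}) the discussion in Section~\ref{sec:6} gives, at the minimizing completion, $u_{ij}=Occ(i,j;P_K)$, i.e. $u_{ij}=c_{ij}$ when $c_{ij}\ge2$ and $u_{ij}=0$ otherwise. Both maps $c_{ij}\mapsto u_{ij}$ are nondecreasing on $\{0,1,\dots,K\}$, so the aggregate monotonicity $\bar{c}_{ij}\le c_{ij}$ yields $\bar{u}_{ij}\le u_{ij}$ for every arc, whence $f_3(\bar{x},\bar{y},\bar{u})\le f_3(x,y,u)$ and $f_4(\bar{x},\bar{w},\bar{u})\le f_4(x,w,u)$. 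Since the original solutions are optimal while the constructed ones are feasible with no larger objective, the constructed solutions are optimal and loopless, as required.

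The step I expect to require the most care is the treatment of the auxiliary variables in formulation (\ref{eqf2}). Unlike (\ref{eqf4}), where $\bar{w}$ and $\bar{u}$ are pinned down by $\bar{x}$, in (\ref{eqf2}) the pair $(\bar{y}_{ij},\bar{u}_{ij})$ is free to be either $(0,0)$ or $(1,1)$ on an arc used exactly once, and only the minimizing choice $(0,0)$ makes the objective comparison tight; I would therefore state explicitly that $\bar{y},\bar{u}$ are taken minimal. The accompanying monotonicity check also deserves a word, since the contribution jumps from $0$ to $2$ as $c_{ij}$ passes from $1$ to $2$ for $Occ$; the closed forms above confirm monotonicity over the whole range, so this is a verification rather than a genuine obstacle.
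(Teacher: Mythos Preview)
Your proposal is correct and follows the same route the paper implicitly takes: the paper omits the proof of Proposition~\ref{prop3} entirely, relying on the analogy with Proposition~\ref{prop4}, and your argument is precisely that analogy carried out in detail. Your observation that only the aggregate inequality $\sum_k \bar{x}_{ij}^k\le\sum_k x_{ij}^k$ is needed (rather than the per-index bound used in the proof of Proposition~\ref{prop4}) is in fact a sharpening, since Algorithm~\ref{alg:alg1} pools all arcs and need not respect path indices; and your care about choosing $(\bar{y},\bar{u})$ minimally in formulation~(\ref{eqf2}) addresses a genuine ambiguity in the proposition's wording that the paper does not comment on.
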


\subsection{Computational experiments\label{sec:6.1}}
The tests setup presented in the following for experimentally assessing formulations (\ref{eqf2}) and (\ref{eqf4}) is similar to what was described in Section~\ref{sec:4.1}.

While a number of relaxations of formulation (\ref{eqf2}) were tested, only the results for the one with the best behavior with regard to the run times are reported. These correspond to the variant that omits the set of constraints (\ref{eqf2c2}) from the original model, (\ref{eqf2}). For simplicity, we keep the same designation and in the following refer to the new model as formulation (\ref{eqf2}).
Likewise, several variants of (\ref{eqf4}) were tested. The model obtained from (\ref{eqf4}) by replacing  constraints (\ref{eqf4c2}) with its aggregated version, constraints (\ref{eqf4c2a}), was significantly faster than the others. Thus, only the results for this new model are presented. Again, for simplicity, we keep the same designation and hereafter refer to this new model as formulation (\ref{eqf4}).

The following codes were written in C, while using IBM ILOG CPLEX version 12.7 for solving the integer programs:
\begin{itemize}
\item \texttt{MRO}: Implementation for minimizing the number of repeated arc occurrences, formulation (\ref{eqf2}).
\item
\texttt{MAR}: Implementation for minimizing the number of arc repetitions, formulation (\ref{eqf4}).
\end{itemize}

\begin{table}[htb]
\centering
\caption{Run times of \texttt{MRO} (seconds)\label{tab:MROcpu0}}
\small{
\begin{tabular}{@{}lccccccccl@{}}\toprule
       &\multicolumn{8}{c}{$K$} \\\cmidrule(lr){2-9}
     $R_{n,m}$ & 3 & 4 & 5 & 6 & 7 & 8 & 9 & 10\\\cmidrule(lr){1-9}
 $R_{100,500}$ & 0.051 & 0.067 & 0.108 & 0.183 & 0.274 & 0.572 & 0.836 &	1.322\\
$R_{100,1000}$ & -- & 0.123 & 0.157 & 0.220 & 0.238 & 0.307 & 0.386 &	0.497\\\cmidrule(lr){1-9}
$R_{300,1500}$ & 0.161 & 0.214 & 0.253 & 0.331 & 0.439 & 0.706 & 0.998 &	1.601\\
$R_{300,3000}$ & 0.303 & 0.370 & 0.444 & 0.508 & 0.677 & 0.724 & 0.846 &	1.000\\\cmidrule(lr){1-9}
$R_{500,2500}$ & 0.280 & 0.382 & 0.485 & 0.594 & 0.757 & 1.035 & 1.326 &	1.687\\
$R_{500,5000}$ & 0.486 & 0.603 & 0.746 & 0.906 & 1.083 & 1.285 & 1.500 &	1.781\\\midrule
Average     & 0.256  & 0.293 & 0.365   & 0.457  & 0.578 & 0.771  & 0.892  & 1.314 & \fbox{0.627}\\\bottomrule
\end{tabular}
\begin{tabular}{@{}lccccccccl@{}}\toprule
    &\multicolumn{8}{c}{$K$} \\\cmidrule(lr){2-9}
  $G_{p,q}$ & 3 & 4 & 5 & 6 & 7 & 8 & 9 & 10\\\cmidrule(lr){1-9}
 $G_{3,12}$ & 0.037 & 0.251 & 0.815 &  1.041 &   5.496 &  15.547 &   8.673 &  11.894\\
 $G_{4,36}$ & 0.094 & 0.293 & 2.204 & 43.209 & 140.598 & \colorbox{lightgray}{300} & \colorbox{lightgray}{300} & \colorbox{lightgray}{300}\\
  $G_{6,6}$ & 0.165 & 0.009 & 0.276 &  0.752 &   2.409 &   3.930 &  23.715 &  14.426\\
$G_{12,12}$ & 0.075 & 0.274 & 0.880 &  3.660 &  15.674 &   9.391 &  87.845 & 146.961\\\midrule
Average & 0.092  & 0.206 & 1.043   & 12.165  & 41.044 & 82.217  & 150.058  & 118.320 & \fbox{45.018}\\\bottomrule
\end{tabular}}
\end{table}

\begin{figure}[htb]
\centering
\begin{subfigure}[h]{0.45\textwidth}
	\centering
	\input{Images/mro_cpu_rn.tex}	
\end{subfigure}
\begin{subfigure}[h]{0.45\textwidth}
	\centering
	\input{Images/mro_cpu_gr.tex}
\end{subfigure}	
\caption{Run times of \texttt{MRO} (seconds)\label{fig:mro_cpu}}
\end{figure}

Unlike the code \texttt{MAR}, which was able to find the optimal solution for all the instances, the code \texttt{MRO} resumed after the 300 seconds limit for the $4\times36$ grids when seeking for more than 7 paths.
This can be seen in Table~\ref{tab:MROcpu0}, which reports their average run times, depicted in Figure~\ref{fig:mro_cpu}.

The code \texttt{MAR} outperformed the previous in almost all cases in terms of run time. Figure~\ref{fig:mar_cpu} illustrates the results summarized in Table~\ref{tab:MARcpu0}. The major differences are found in the results associated to the grid networks (where run times fall, in average, by 99.6\%) and in the subset of the random networks previously identified as the harder to solve (which has reductions of 48\% for the $R_{100,500}$ instances, 41\% for the $R_{300,1500}$ instances and 25\% for the $R_{300,1500}$ instances). Still, more than emphasizing the relative reductions towards \texttt{MRO}, it is important to point out that \texttt{MAR} solved all instances, in less than 2 seconds. Moreover, results suggest that \texttt{MAR} is far less susceptible to the effect of the variables that have been identified as inhibiting to other models (the layout of the network, its sparseness and the proportion between the value of $K$ and the size of the network), indicating that this model can be used in a wider range of situations. As a final remark, there was some instability in the run time of the grid networks that should be clarified in a future work.

\begin{table}[htb]
\centering
\caption{Run times of \texttt{MAR} (seconds)\label{tab:MARcpu0}}
\small{
\begin{tabular}{@{}lccccccccl@{}}\toprule
       &\multicolumn{8}{c}{$K$} \\\cmidrule(lr){2-9}
     $R_{n,m}$ & 3 & 4 & 5 & 6 & 7 & 8 & 9 & 10\\\cmidrule(lr){1-9}
 $R_{100,500}$ & 0.051 & 0.078 &	0.116 & 0.172 & 0.181 &	0.301 & 0.342 &	0.519\\
$R_{100,1000}$ & -- & 0.097 & 0.140 & 0.143 & 0.193 &	0.265 & 0.236 &	0.316\\\cmidrule(lr){1-9}
$R_{300,1500}$ & 0.139 & 0.193 & 0.222 & 0.259 & 0.313 &	0.426 & 0.486 &	0.717\\
$R_{300,3000}$ & 0.226 & 0.323 & 0.391 & 0.458 & 0.609 &	0.706 & 0.773 &	0.905\\\cmidrule(lr){1-9}
$R_{500,2500}$ & 0.249 & 0.318 &	0.374 & 0.442 & 0.593 &	0.733 & 1.003 &	1.172\\
$R_{500,5000}$ & 0.384 & 0.571 & 0.825 & 1.024 & 1.211 &	1.369 & 1.576 &	1.730\\\midrule
Average & 0.209  & 0.263 & 0.344 & 0.416  & 0.516 & 0.633  & 0.736  & 0.893 & \fbox{0.501}\\\bottomrule
\end{tabular}
\begin{tabular}{@{}lccccccccl@{}}\toprule
       &\multicolumn{8}{c}{$K$} \\\cmidrule(lr){2-9}
     $G_{p,q}$ & 3 & 4 & 5 & 6 & 7 & 8 & 9 & 10\\\cmidrule(lr){1-9}
 $G_{3,12}$ & 0.010 & 0.016 & 0.012 & 0.017 & 0.020 & 0.021 & 0.023 & 0.054\\
 $G_{4,36}$ & 0.047 & 0.083 & 0.126 & 0.157 & 0.215 & 0.250 & 0.620 & 0.382\\
  $G_{6,6}$ & 0.010 & 0.004 & 0.114 & 0.012 & 0.018 & 0.214 & 0.157 & 0.020\\
$G_{12,12}$ & 0.034 & 0.049 & 0.142 & 0.242 & 0.124 & 0.383 & 0.323 & 1.292\\\midrule
Average & 0.025  & 0.038 & 0.098 & 0.107  & 0.094 & 0.217  & 0.280  & 0.437 & \fbox{0.162}\\\bottomrule
\end{tabular}}
\end{table}

\begin{figure}[htb]
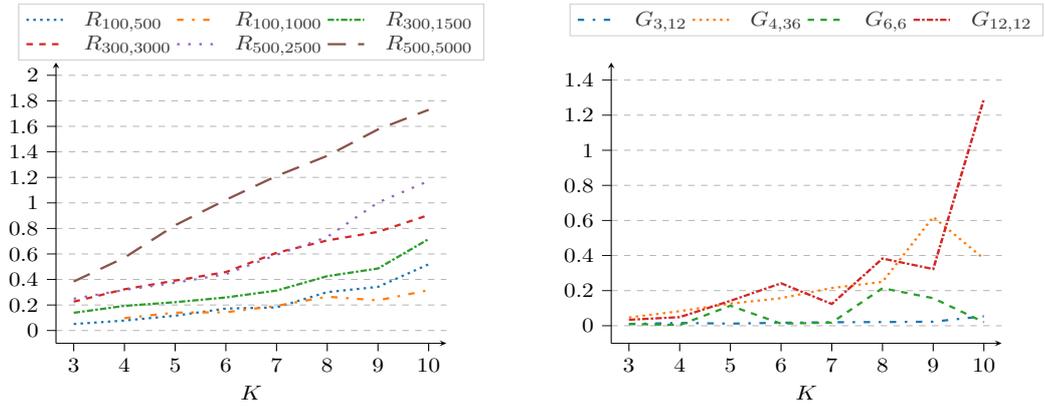

\centering
\begin{subfigure}[h]{0.45\textwidth}
	\centering
	\input{Images/mar_cpu_rn.tex}	
\end{subfigure}
\begin{subfigure}[h]{0.45\textwidth}
	\centering
	\input{Images/mar_cpu_gr.tex}
\end{subfigure}	
\caption{Run times of \texttt{MAR} (seconds)\label{fig:mar_cpu}}
\end{figure}

The average integer programming gaps produced by the linear relaxation of (\ref{eqf2}), in Table~\ref{tab:MROig0}, range between 1\% ad 18\% for the random instances and between 7\% and 33\% for the grid instances.
The same values were all 0 for the linear relaxation of formulation (\ref{eqf4}).

\begin{table}[htb]
\centering
\caption{Average integer programming gaps of \texttt{MRO$_L$} (\%)\label{tab:MROig0}}
\small{
\begin{tabular}{@{}lccccccccl@{}}\toprule
       &\multicolumn{8}{c}{$K$} \\\cmidrule(lr){2-9}
     $R_{n,m}$ & 3 & 4 & 5 & 6 & 7 & 8 & 9 & 10\\\midrule
 $R_{100,500}$ & 5 & 18 & 12 & 11 &  8 &  9 &  9 & 10\\
$R_{100,1000}$ &-- &  6 & 10 &  9 & 15 & 15 & 14 & 12\\\midrule
$R_{300,1500}$ & 7 & 15 & 18 & 12 & 10 &  8 & 11 & 11\\
$R_{300,3000}$ & 1 &  5 & 11 & 13 & 12 & 14 & 14 & 12\\\midrule
$R_{500,2500}$ & 8 & 19 & 11 & 11 &  9 & 11 & 12 & 11\\
$R_{500,5000}$ & 3 &  6 &  7 & 11 & 13 & 12 & 14 & 10\\\bottomrule
\end{tabular}\hfill
\begin{tabular}{@{}lccccccccl@{}}\toprule
       &\multicolumn{8}{c}{$K$} \\\cmidrule(lr){2-9}
  $G_{p,q}$ & 3 & 4 & 5 & 6 & 7 & 8 & 9 & 10\\\midrule
 $G_{3,12}$ & 25 & 28 & 17 & 13 & 11 & 10 &  9 & 9\\
 $G_{4,36}$ & 25 & 33 & 36 & 21 & 14 & 11 &  8 & 7\\
  $G_{6,6}$ & 25 & 33 & 29 & 28 & 25 & 24 & 22 & 21\\
$G_{12,12}$ & 25 & 33 & 29 & 28 & 25 & 24 & 22 & 21\\\bottomrule
\end{tabular}}
\end{table}

\cleardoublepage
\section{Bounding the number of arc presences\label{sec:7}}   

Formulation (\ref{eqf1}) aims at minimizing the number of arcs which appear in more than one path.
The undesired consequence of the simplicity of this objective function may be that few arcs appear in many different paths, a situation which is illustrated in Figure~\ref{fig:fig2new}.
This was the motivation to consider the number of times that each repeated arc appears in the objective function of formulations (\ref{eqf2}) and (\ref{eqf4}), presented in Section~\ref{sec:6}.
In the following we propose an intermediate solution, consisting of overcoming this handicap by adding a constraint over the number of times that each arc is present in the solution. In addition, we found that this approach also had an interesting side effect in the paths dissimilarity of the solutions generated by formulations (\ref{eqf2}) and (\ref{eqf4}): when in the presence of multiple optimal solutions with respect to the number of arc repetitions, bounding the number of times that each arc appears gives an extra condition for untying those solutions, thus increasing their dissimilarity. Therefore, the new set of constraints will also be considered in the context of formulations (\ref{eqf2}) and (\ref{eqf4}).  Naturally, on the downside, the new models may be more time consuming.

The new set of constraints are very similar to the set of constraints proposed by \cite{Constantino_2017} to prevent repetitions of the arcs over the time horizon. However, whereas in \cite{Constantino_2017} the bound is an external parameter, in our case the bound is fixed by solving a simple problem that optimizes the worst case in terms of the number of times that each arc is present in the solution.

The new formulation aims at finding a set of $K$ paths with the minimum maximum number of arc presences.
It is as follows
\begin{subequations}\label{eqf0}
	\begin{eqnarray}
	\min && \max_{(i,j)\in A}\left\{\sum_{k=1}^Kx_{ij}^k\right\}\\
	\mbox{subject to} && \sum_{j\in N:(i,j)\in A}x_{ij}^k-\sum_{j\in N:(j,i)\in A}x_{ji}^k=\left\{
	\begin{array}{rl}
	1 & i=s\\
	0 & i\ne s,t\\
	-1 & i=t
	\end{array}\right.,\ \ \ k=1,\ldots,K\\
	&& x_{ij}^k\in\{0,1\}, \ \ \ (i,j)\in A,\ \ \ k=1,\ldots,K
	\end{eqnarray}
\end{subequations}
where the decision variables are $x_{ij}^k\in\{0,1\}$ equal to 1 if and
only if the arc $(i,j)$ appears in path $k$, $(i,j)\in A$, $k=1,\ldots,K$.
This formulation can be linearized as
\begin{subequations}\label{eqlf0}
	\begin{eqnarray}
	\min && r\\
	\mbox{subject to} && \sum_{j\in N:(i,j)\in A}x_{ij}^k-\sum_{j\in N:(j,i)\in A}x_{ji}^k=\left\{
	\begin{array}{rl}
	1 & i=s\\
	0 & i\ne s,t\\
	-1 & i=t
	\end{array}\right.,\ \ \ k=1,\ldots,K\\
	&& r \ge \sum_{k=1}^Kx_{ij}^k, \ \ \ (i,j)\in A\\
	&& x_{ij}^k\in\{0,1\}, \ \ \ (i,j)\in A,\ \ \ k=1,\ldots,K
	\end{eqnarray}
\end{subequations}
which is equivalent to its linear programming relaxation.

Now, let $R^*$ be the optimal value for problem (\ref{eqlf0}) computed in advance. Then a new constraint can be added to formulations (\ref{eqf1}), (\ref{eqf2}) and (\ref{eqf4}) in order to prevent the number of arc presences from exceeding that value,
\begin{equation}\label{eq1}
\sum_{k=1}^K x_{ij}^k\le R^*,\ \ \ (i,j)\in A.
\end{equation}
The problems modeled by the resulting formulations are constrained and, therefore, different versions of the original ones. To assess whether the new problems produce better solutions to the $K$ dissimilar paths problem, a set of computational experiments was performed. Results are discussed in Sections \ref{sec:7.1} and \ref{sec:8.2}.

\subsection{Computational experiments\label{sec:7.1}}
In this section we analyze the impact of adding the constraints (\ref{eq1}) to formulations (\ref{eqf1}), (\ref{eqf2}) and (\ref{eqf4}) on the run times.  
Considering the experimental setup described in Section~\ref{sec:4}, the following codes were tested:
\begin{itemize}
\item \texttt{MRAA}:
implementation of formulation (\ref{eqf1}) including the constraints (\ref{eq1});
\item \texttt{MROA}:
implementation of formulation (\ref{eqf2}) including the constraints (\ref{eq1});
\item \texttt{MARA}:
implementation of formulation (\ref{eqf4}) including the constraints (\ref{eq1}).
\end{itemize}
Like before, the codes were written in C, calling the integer programming solver IBM ILOG CPLEX version 12.7.
The impact of adding the constraints (\ref{eq1}) to the formulations introduced before on the several parameters is measured as
$100\times(\texttt{MA}-\texttt{M})/\texttt{M}\ \%,$
where \texttt{M} stands for each of the codes listed above.

Of the new codes, \texttt{MARA} was the only one able of finding the optimal solution for all the instances within the time limit of 300 seconds. Both \texttt{MRAA} and \texttt{MROA} resumed after that limit for the $4\times 36$ grids and $K=10$.

According to Table~\ref{tab:MAcpu0}, in most cases the constrained problems require more time to solve when the networks are denser, while the run times do not change much for the sparser instances.
In some of the latter cases there is even a speed up. The speed up happens mostly for \texttt{MRA} and \texttt{MRO}, and is particularly relevant in grids, which are instances where finding solutions is difficult.
It should be added that the run times required for solving the problem (\ref{eqlf0}) are included in the values on Table~\ref{tab:MAcpu0}.
The results regarding the run times for the grid networks were uneven.

The integer programming gap associated with \texttt{MARA} was equal to 0 in all the tested instances.
In general these values increased for the remaining formulations after adding the new constraints, specially in the random instances and in the smaller grids and big $K$'s. However, this variation is not very meaningful, as the unconstrained and the constrained problems are different.

\begin{landscape}
\begin{table}[htb]
\centering
\caption{Run times variation for \texttt{MRAA}, \texttt{MROA} and \texttt{MARA} (\%)\label{tab:MAcpu0}}
\small{
\begin{tabular}{l c@{ }c@{ }c@{ }c@{ }c@{ }c@{ }c@{ }c  c@{ }c@{ }c@{ }c@{ }c@{ }c@{ }c@{ }c  c@{ }c@{ }c@{ }c@{ }c@{ }c@{ }c@{ }c }\toprule
	& \multicolumn{8}{c}{\texttt{MRAA}} & \multicolumn{8}{c}{\texttt{MROA}} & \multicolumn{8}{c}{\texttt{MARA}}\\\cmidrule(lr){2-9} \cmidrule(lr){10-17}\cmidrule(lr){18-25}
	& \multicolumn{8}{c}{$K$} & \multicolumn{8}{c}{$K$} & \multicolumn{8}{c}{$K$}\\\cmidrule(lr){2-9} \cmidrule(lr){10-17}\cmidrule(lr){18-25}
$R_{n,m}$ &  3 &  4 &  5 &  6 &  7 & 8 & 9 & 10   & 3 &  4 &  5 &  6 &  7 & 8 & 9 & 10 & 3 &  4 &  5 &  6 &  7 & 8 & 9 & 10\\\midrule
$R_{100,500}$ & 32 & 56 & 38 & 25 & 19 & -14 & -17 & -50 &
29 & 28 & 17 & -14 &  2 & -29 & -30 & -39 &
42 & 43 & 35 & 20 &  73 &  40 &  48 & 23\\
$R_{100,1000}$ & -- & 64 & 50 & 38 & 71 &  42 &  59 &  56 &
-- & 38 & 22 &  8 & 37 & 15 & 10 &  9 &
-- & 83 & 75 & 87 & 77 & 49 & 97 & 86\\\midrule
$R_{300,1500}$ & 48 & 39 & 34 & 44 & 47 &  37 &  -3 & -21 &
29 & 25 & 40 & 27 & 28 & -2 & -7 & -23 &
58 & 60 & 67 & 80 & 92 & 67 & 66 & 42\\
$R_{300,3000}$ & 48 & 62 & 66 & 66 & 69 &  72 &  70 & 584 &
27 & 40 & 40 &  45 & 41 &  51 &  55 & 532 &
71 & 76 & 74 & 79 &  73 &  76 &  90 & 614\\\midrule
$R_{500,2500}$ & 50 & 49 & 45 & 41 & 56 &  52 &  11 &  33 &
31 & 22 & 25 & 24 & 35 &  43 & 10 &  9 &
60 & 63 & 79 & 78 & 81 & 104 & 56 & 55\\
$R_{500,5000}$ & 44 & 39 & 52 & 52 & 79 &  80 & 253 & 959 &
25 & 34 & 48 & 45 & 62 & 61 & 230 & 982 &
53 & 56 & 55 & 50 & 70 & 79 & 239 & 1031\\\bottomrule
\end{tabular}\hfill
\begin{tabular}{l c@{ }c@{ }c@{ }c@{ }c@{ }c@{ }c@{ }c  c@{ }c@{ }c@{ }c@{ }c@{ }c@{ }c@{ }c  c@{ }c@{ }c@{ }c@{ }c@{ }c@{ }c@{ }c }\toprule
	& \multicolumn{8}{c}{\texttt{MRAA}} & \multicolumn{8}{c}{\texttt{MROA}} & \multicolumn{8}{c}{\texttt{MARA}}\\\cmidrule(lr){2-9} \cmidrule(lr){10-17}\cmidrule(lr){18-25}
	& \multicolumn{8}{c}{$K$} & \multicolumn{8}{c}{$K$} & \multicolumn{8}{c}{$K$}\\\cmidrule(lr){2-9} \cmidrule(lr){10-17}\cmidrule(lr){18-25}
   $G_{p,q}$ &  3 &  4 &  5 &  6 &  7 & 8 & 9 & 10   & 3 &  4 &  5 &  6 &  7 & 8 & 9 & 10 & 3 &  4 &  5 &  6 &  7 & 8 & 9 & 10\\\midrule
  $G_{3,12}$ & $-13$ & $-85$ & $-70$ & $-70$ & $-61$ & $-43$ & $-22$ & $-27$ &
  $-43$ &  $-90$ &  $-81$ & $-80$ & $-95$ & $-93$ & $-90$ & $-41$ &
61 &  42 &  81 &  60 &  47 &  73 &  67 & 72\\
 $G_{4,36}$ &  $-24$ &   9 &   7 & 294 & $-81$ & $-99$ & $-98$ & 0 &
 $-36$ &  $-65$ & 36 & $-90$ & $-74$ & $-97$ & $-98$ &  0 &
57 &  35 &  22 &  20 &   7 &  10 & $-43$ & $-2$\\
 $G_{6,6}$ &  16 & $-87$ & $-68$ & $-92$ & $-97$ & $-84$ &  14 & $-24$ &
 $-90$ & 79 &  $-86$ & $-94$ & $-53$ & $-91$ & $-93$ & $-93$ &
78 & 273 & $-79$ & 103 & 558 & $-86$ & $-47$ & 82\\
 $G_{12,12}$ & $-56$ & $-66$ & $-57$ & $-95$ & $-97$ & $-99$ & $-98$ & $-98$ &
 $-44$ & $-78$ &  $-77$ & $-86$ & $-95$ & $-87$ & $-95$ & $-99$ &
 86 &  31 &  43 & $-46$ & 348 & $-28$ &  73 & $-14$\\\bottomrule
\end{tabular}}
\end{table}
\end{landscape}

\section{Application to finding $K$ dissimilar paths\label{sec:8}} 
In the previous sections, integer formulations were presented for the problems of finding $K$ paths between a given pair of nodes, such that:
\begin{itemize}
\item the number of arc overlaps for each pair of paths is minimized;
\item the number of repeated arcs is minimized;
\item the number of occurrences of repeated arcs, or the number of arc repetitions, is minimized.
\end{itemize}
These problems were suggested with the purpose of capturing characteristics of sets of $K$ dissimilar paths. Therefore in the current section the behavior of the presented approaches is discussed and compared from the perspective of the dissimilarity, based on the metric $D_1$ defined in Section~\ref{sec:1}. In addition, these approaches are also compared with the classical method known as the IPM and proposed in~\citet{Johnson_1992}.
As mentioned in Section~\ref{sec:2}, the idea behind this method is to solve $K$ shortest path problems and penalize the cost of the selected arcs every time one of those paths is computed, in order to prevent their overlap as much as possible.
This approach has often been used for comparisons in the literature due to its flexibility to incorporate features of various problems as well as the simplicity of its implementation.

While a relative comparison of the dissimilarities produced by the different approaches is possible, an absolute assessment of the results requires the optimal value of $D_1$ to be known for each instance. As that is the case only for some of the grid instances, the analysis is based on a description of the results of each model followed by a relative overall comparison. For that purpose, both the average dissimilarity between the pairs of paths in the solution (\texttt{AvDi}) and their minimum dissimilarity (\texttt{MiDi}) are calculated for each instance. Then, the averages of \texttt{AvDi} and of \texttt{MiDi} for each set of instances are calculated.

The test bed used for this study and the testing conditions are the same described in Section~\ref{sec:4}. Furthermore, the \texttt{IPM} code implements the method with the same name, in C and calling the CPLEX solver for solving the shortest path problems. A set of preliminary tests was run, in order to decide how to parametrize the \texttt{IPM}.
\texttt{IPM} works with unitary arc costs and was tested with the additive penalizations $\alpha=0.25,0.50,0.75,1.00$, applied to the cost of the arcs of the most recently found path. Since considering the penalization $\alpha=1.00$ produced better results than the remaining penalties for \texttt{AvDi} and \texttt{MiDi} in all random instances, this was the value fixed in the rest of the experiments.
Tables \ref{tab:ipm_AvDi_MiDi_r} and \ref{tab:IPMcpu0} in Appendix~\ref{app:3} present the dissimilarities and the run times of this implementation, which will be used for comparison with the introduced formulations.
The dissimilarity results are far better for grids than for random networks. This is due to the fact that all paths in the used grids of a certain size have the same length. In effect, \texttt{IPM} naturally avoids the arcs previously used, unless the cost associated to the increase in the number of arcs exceeds the penalty. When applied to the random networks, the length of the paths varies and so does the dissimilarity of the solutions found by the method. 
Because this method essentially solves $K$ shortest path problems, it has polynomial complexity of $O(Km+Kn\log n)$ and it run fast for any of the considered instances: in less than 0.70 seconds in random networks and in less than 0.05 seconds in grid networks.

The rest of the section is organized as follows:
first, the results for the four initial formulations and the \texttt{IPM} are compared;
then the effect of adding the constraints (\ref{eq1}) to the original models is discussed.

\subsection{Unconstrained formulations\label{sec:8.1}}
The average and minimum dissimilarities of \texttt{MAO}, \texttt{MRA}, \texttt{MRO}, and \texttt{MAR} are compared in the following.
The detailed results for each formulation can be found in Appendix \ref{app:1}.

\begin{figure}[htb]
\centering
\begin{subfigure}[h]{0.4\textwidth}
	\centering
	\input{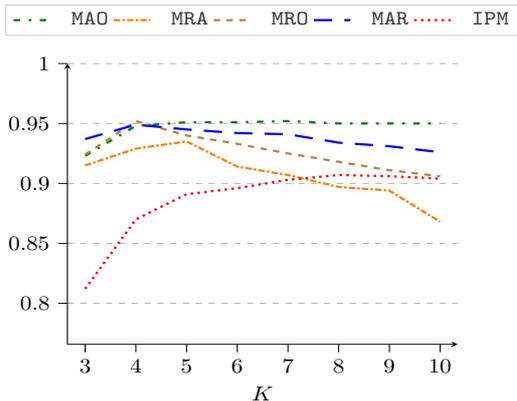}	
\end{subfigure}\hfill
\begin{subfigure}[h]{0.5\textwidth}
\centering
{\small
\begin{tabular}{@{}lc@{ }c@{ }c@{ }c@{ }c@{ }c@{ }c@{ }c@{ }cl@{}}\toprule
 &\multicolumn{8}{c}{$K$}\\\cmidrule(lr){2-9}
 &  3 & 4 & 5 & 6 & 7 & 8 & 9 & 10\\\midrule
\texttt{MAO} & 0.923 & 0.948 & 0.951 & 0.952 & 0.950 & 0.950 & 0.949 & 0.944\\
\texttt{MRA} & 0.915 & 0.929 & 0.935 & 0.914 & 0.907 & 0.897 & 0.894 & 0.860\\
\texttt{MRO} & 0.924 & 0.952 & 0.940 & 0.933 & 0.925 & 0.918 & 0.911 & 0.906\\
\texttt{MAR} & 0.937 & 0.949 & 0.945 & 0.942 & 0.941 & 0.934 & 0.931 & 0.926\\
\texttt{IPM} & 0.812 & 0.870 & 0.891 & 0.896 & 0.903 & 0.907 & 0.906 & 0.904\\\bottomrule
\end{tabular}}
\end{subfigure}
\vspace*{-.7cm}
\caption{Average \texttt{AvDi} values of the unconstrained formulations in random networks\label{fig:avdi_r_all}}
\end{figure}

We begin by analysing the results for the random networks.
Figure~\ref{fig:avdi_r_all} depicts the variation of the average \texttt{AvDi} for each model.
In general, the highest values of the average \texttt{AvDi} are associated to the code \texttt{MAO}, which is followed closely by \texttt{MAR} (the difference between the values associated to the two models does not exceed 2\%). Nevertheless, \texttt{MAR} surmounts \texttt{MAO} for $K=3$ and $K=4$. 
On the other hand, \texttt{IPM} has the worst performance with this regard, except for $K\ge8$, where it outperforms \texttt{MAR}.
In fact, the average dissimilarity obtained by \texttt{IPM} tends to improve when $K$ grows, whereas it tends  to worsen for all the formulations but \texttt{MAO}.

Figure \ref{fig:all_avdi_rn} summarizes the average \texttt{AvDi} results for each random instance.
The variation of the dissimilarities follows closely the pattern identified in Sections~\ref{sec:4.1}, \ref{sec:5.1} and \ref{sec:6.1} for the run times of the formulations: the instances recognised as harder to solve, namely $R_{100,500}$, are associated to the worst \texttt{AvDi} values. 

Figure~\ref{fig:avdi_r_all_q} allows a comparison of the dispersion of the results. The best scores are associated to \texttt{MAO}, followed  by \texttt{MAR} and \texttt{MRO}, whereas the \texttt{IPM} is the code with more disperse values.
In terms of the formulations, \texttt{MRA} was worse than the others.
The box-plots in Figures~\ref{fig:mao_avdi_rg} -- \ref{fig:mar_avdi_rg}, in Appendix \ref{app:1}, allow a thorough analysis of the dispersion of the average \texttt{AvDi} values for each formulation.
In general, the dispersion of the dissimilarities increases with $K$. It can also be concluded that, as expected, the hardest instances have smaller dissimilarities and a bigger dispersion of values.

The values of \texttt{MiDi} allow to study the worst case in terms of dissimilarity.
Figure~\ref{fig:midi_r_all} depicts a summary of the average \texttt{MiDi} for random networks.
In this case, the best results are found for the \texttt{MRO} and the \texttt{MAR} models. \texttt{IPM} is  worse than all formulations with this regard, while all the latter show very similar performances (the inner differences do not exceed 5\%). Also worthy of note, is the significant decrease of the \texttt{MiDi} values observed with the increase of $K$.

\begin{figure}[H]
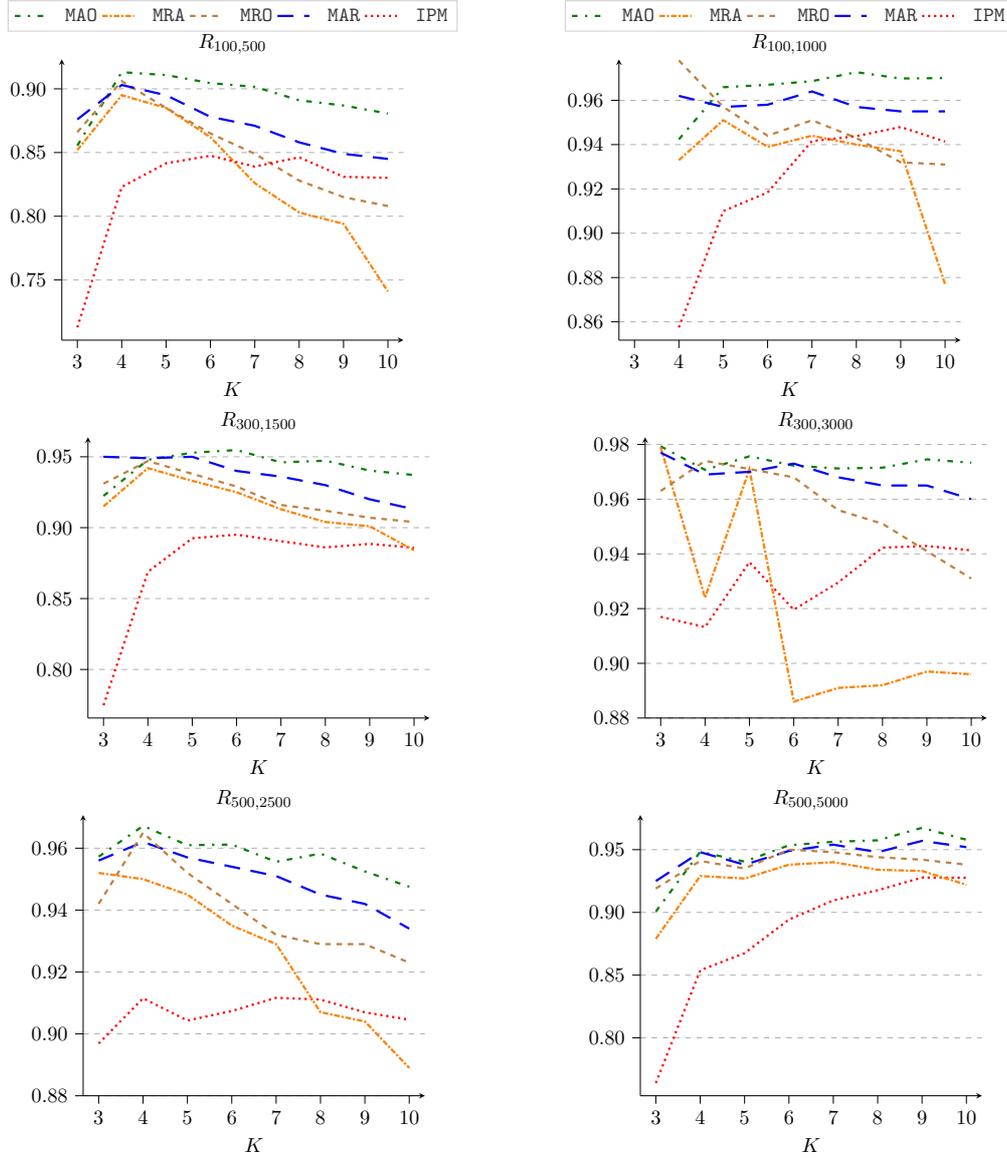

\centering
\begin{subfigure}[h]{0.45\textwidth}
	\centering
	\input{Comp/100-5-AvDi-Rn.tex}		
\end{subfigure}
\begin{subfigure}[h]{0.45\textwidth}
	\centering
	\input{Comp/100-10-AvDi-Rn.tex}		
\end{subfigure}
\begin{subfigure}[h]{0.45\textwidth}
	\centering
	\input{Comp/300-5-AvDi-Rn.tex}
\end{subfigure}
\begin{subfigure}[h]{0.45\textwidth}
	\centering
	\input{Comp/300-10-AvDi-Rn.tex}
\end{subfigure}
\begin{subfigure}[h]{0.45\textwidth}
	\centering
	\input{Comp/500-5-AvDi-Rn.tex}
\end{subfigure}
\begin{subfigure}[h]{0.45\textwidth}
	\centering
	\input{Comp/500-10-AvDi-Rn.tex}
\end{subfigure}
\vspace*{-.3cm}
\caption{Average dissimilarity of the unconstrained formulations in random networks\label{fig:all_avdi_rn}}
\end{figure}

\begin{figure}[htb]
\centering
\begin{subfigure}[h]{0.35\textwidth}
	\centering
	\input{Images/avdi_r_all2.tex}	
\end{subfigure}\hfill
\begin{subfigure}[h]{0.55\textwidth}
\centering
{\small
\begin{tabular}{@{}lccccccl@{}}\toprule
	  & \texttt{MAO} & \texttt{MRA} & \texttt{MRO} & \texttt{MAR}& \texttt{IPM}\\\midrule
     Min. value & 0.855 & 0.741 & 0.808 & 0.845 & 0.712\\
1$^{\rm st}$ quartile & 0.942 & 0.892 & 0.923 & 0.936 & 0.862\\
         Median & 0.956 & 0.922 & 0.938 & 0.950 & 0.904\\
3$^{\rm rd}$ quartile & 0.968 & 0.937 & 0.949 & 0.958 & 0.918\\
     Max. value & 0.979 & 0.979 & 0.978 & 0.977 & 0.947\\\bottomrule
\end{tabular}}
\end{subfigure}	
\vspace*{-.7cm}
\caption{\texttt{AvDi} dispersion of the unconstrained formulations in random networks\label{fig:avdi_r_all_q}}
\end{figure}

\begin{figure}[htb]
\centering
\begin{subfigure}[h]{0.4\textwidth}
\centering
	\input{Images/midi_r_all.tex}
\end{subfigure}\hfill
\begin{subfigure}[h]{0.6\textwidth}
\centering
{\small
\begin{tabular}{@{}lc@{ }c@{ }c@{ }c@{ }c@{ }c@{ }c@{ }c@{ }cl@{}}\toprule
 &\multicolumn{8}{c}{$K$}\\\cmidrule(lr){2-9}
 & 3 & 4 & 5 & 6 & 7 & 8 & 9 & 10\\\midrule
\texttt{MAO} & 0.864 & 0.804 & 0.790 & 0.761 & 0.731 & 0.717 & 0.696 & 0.687\\
\texttt{MRA} & 0.843 & 0.802 & 0.790 & 0.761 & 0.731 & 0.717 & 0.696 & 0.687\\
\texttt{MRO} & 0.836 & 0.851 & 0.803 & 0.785 & 0.752 & 0.731 & 0.706 & 0.689\\
\texttt{MAR} & 0.882 & 0.825 & 0.795 & 0.775 & 0.757 & 0.717 & 0.717 & 0.675\\
\texttt{IPM} & 0.610 & 0.505 & 0.523 & 0.502 & 0.488 & 0.446 & 0.425 & 0.378\\\bottomrule
\end{tabular}}
\end{subfigure}	
\vspace*{-.5cm}
\caption{Average \texttt{MiDi} values of the unconstrained formulations in random networks\label{fig:midi_r_all}}
\end{figure}

To summarize, the best and least disperse average \texttt{AvDi}'s are associated to the \texttt{MAO} model. However, its high run times undermine its application. In contrast, \texttt{MAR} produced solutions with good average and dispersion dissimilarities in less than 2 seconds, for all instances. Furthermore, the \texttt{MiDi} analysis indicates that \texttt{MAR} is less likely to produce solution with very poor dissimilarities.

\begin{figure}[htb]
\centering
\begin{subfigure}[h]{0.35\textwidth}
	\centering
	\input{Images/avdi_g_all.tex}	
\end{subfigure}\hfill
\begin{subfigure}[h]{0.55\textwidth}
\centering
{\small
\begin{tabular}{@{}lc@{ }c@{ }c@{ }c@{ }c@{ }c@{ }c@{ }c@{ }cl@{}}\toprule
 &\multicolumn{8}{c}{$K$}\\\cmidrule(lr){2-9}
 & 3 & 4 & 5 & 6 & 7 & 8 & 9 & 10\\\midrule
\texttt{MAO} & 0.959 & 0.930 & 0.887 & 0.874 & 0.854 & 0.845 & 0.833 & 0.827\\
\texttt{MRA}  & 0.959 & 0.907 & 0.753 & 0.684 & 0.657 & 0.563 & 0.516 & 0.490\\
\texttt{MRO} & 0.959 & 0.923 & 0.864 & 0.822 & 0.783 & 0.753 & 0.732 & 0.716\\
\texttt{MAR} & 0.959 & 0.926 & 0.879 & 0.842 & 0.809 & 0.784 & 0.752 & 0.751\\
\texttt{IPM} & 0.958 & 0.911 & 0.886 & 0.868 & 0.848 & 0.837 & 0.827 & 0.823\\\bottomrule
\end{tabular}}
\end{subfigure}	
\vspace*{-.7cm}
\caption{Average \texttt{AvDi} values of the unconstrained formulations in grid networks\label{fig:avdi_gd_all}}
\end{figure}

Next, the results for the grid networks are analyzed. As already mentioned, for some grid networks it is possible to know the optimal value of $D_1$. In fact, the length of the paths in a grid network $G_{p,q}$ is constant, namely $q+p-2$. Consequently, maximizing $D_1$ is equivalent to minimizing only the numerator of the fractions in its expression, which is precisely the goal of formulation (\ref{eqf3}). Thus, the values of \texttt{AvDi} for all the instances solved to optimality by \texttt{MAO} are optimal dissimilarities.
Figure~\ref{fig:avdi_gd_all} illustrates the main differences between the five codes.
Like what happened for the random networks, \texttt{MAO} produces the best results. However, this model is now followed by \texttt{IMP}, and the differences towards \texttt{MAR} have become wider and go as high as 10\% whereas they do not exceed 2\% for \texttt{IPM}. \texttt{MRA} still provides the worst results.
Despite the good dissimilarities obtained by \texttt{MAO}, it is worth noting that these are affected by the fact that most of these instances were not solved to optimality within the time limit.

Figure \ref{fig:all_avdi_gd} allows a more comprehensive comparison of the \texttt{AvDi} values produced by each code. The results highlight the greater difficulty of solving the square instances, as pointed out in sections \ref{sec:4.1}, \ref{sec:5.1} and \ref{sec:6.1}, and also some inconsistency in the values associated to \texttt{MRA}.
Figure~\ref{fig:avdi_gd_all_q} allows the analysis of the dispersion of these results.
\texttt{MAO} is the formulation with the more uniform values, followed by \texttt{IPM}. On the other hand, the models \texttt{MAR}, \texttt{MRO} and \texttt{MRA} present the smallest dispersion (in this order). Overall, the differences between the models lay in the low quartiles, thus \texttt{MAO} and \texttt{IPM} offer less chances of obtaining solutions with low values \texttt{AvDi} for this type of networks. The figures and the tables in Appendix \ref{app:1}, show that the trends discussed above regarding Figure \ref{fig:all_avdi_gd} are also present when studying the dissimilarity. 

\begin{figure}[htb]
\centering
\begin{subfigure}[h]{0.45\textwidth}
	\centering
	\input{Comp/6-6-AvDi-Gr.tex}		
\end{subfigure}
\begin{subfigure}[h]{0.45\textwidth}
	\centering
	\input{Comp/3-12-AvDi-Gr.tex}		
\end{subfigure}
\begin{subfigure}[h]{0.45\textwidth}
	\centering
	\input{Comp/12-12-AvDi-Gr.tex}		
\end{subfigure}
\begin{subfigure}[h]{0.45\textwidth}
	\centering
	\input{Comp/4-36-AvDi-Gr.tex}		
\end{subfigure}	
\vspace*{-.3cm}
\caption{Average dissimilarity of the unconstrained formulations in grid networks\label{fig:all_avdi_gd}}
\end{figure}

\begin{figure}[htb]
\centering
\begin{subfigure}[h]{0.35\textwidth}
	\centering
	\input{Images/avdi_g_all2.tex}	
\end{subfigure}\hfill
\begin{subfigure}[h]{0.55\textwidth}
\centering
{\small
\begin{tabular}{@{}lccccccl@{}}\toprule
	 & \texttt{MAO} & \texttt{MRA} & \texttt{MRO} & \texttt{MAR} & \texttt{IPM}\\\midrule
       Min. value & 0.730 & 0.185 & 0.542 & 0.607 & 0.728\\
1$^{st}$ quartile & 0.830 & 0.517 & 0.693 & 0.746 & 0.814\\
           Median & 0.879 & 0.779 & 0.879 & 0.879 & 0.879\\
3$^{rd}$ quartile & 0.936 & 0.935 & 0.936 & 0.936 & 0.933\\
       Max. value & 0.982 & 0.982 & 0.982 & 0.982 & 0.982\\\bottomrule
\end{tabular}}\end{subfigure}	
\vspace*{-.7cm}
\caption{\texttt{AvDi} dispersion of the unconstrained formulations in grid networks\label{fig:avdi_gd_all_q}}
\end{figure}

As shown in Figure~\ref{fig:midi_g_all}, there is no clear dominance regarding the values of \texttt{MiDi}. Nevertheless, the worst results are associated to the \texttt{IPM} for $K=3,4$ and to \texttt{MRA} model for $K\ge5$. It should be mentioned that, in all cases, the results follow heavily with the increase of $K$. In fact, the new formulations only look for good average dissimilarities between the pairs of paths in the solution, which may hide solutions with pairs of paths with very different dissimilarities.   

\begin{figure}[htb]
\centering
\begin{subfigure}[h]{0.4\textwidth}
	\centering
	\input{Images/midi_g_all.tex}	
\end{subfigure}\hfill
\begin{subfigure}[h]{0.6\textwidth}
\centering
{\small
\begin{tabular}{@{}lc@{ }c@{ }c@{ }c@{ }c@{ }c@{ }c@{ }c@{ }cl@{}}\toprule
 &\multicolumn{8}{c}{$K$}\\\cmidrule(lr){2-9}
 & 3 & 4 & 5 & 6 & 7 & 8 & 9 & 10\\\midrule
\texttt{MAO} & 0.919 & 0.824 & 0.592 & 0.469 & 0.408 & 0.394 & 0.356 & 0.320\\
\texttt{MRA} & 0.938 & 0.766 & 0.437 & 0.419 & 0.376 & 0.182 & 0.193 & 0.080\\
\texttt{MRO} & 0.920 & 0.805 & 0.539 & 0.501 & 0.464 & 0.410 & 0.342 & 0.334\\
\texttt{MAR} & 0.906 & 0.805 & 0.533 & 0.495 & 0.463 & 0.439 & 0.348 & 0.322\\
\texttt{IPM} & 0.911 & 0.633 & 0.524 & 0.517 & 0.483 & 0.353 & 0.314 & 0.314\\\bottomrule
\end{tabular}}
\end{subfigure}	
\vspace*{-.3cm}
\caption{Average \texttt{MiDi} of the unconstrained formulations in grid networks\label{fig:midi_g_all}}
\end{figure}

In conclusion, the dissimilarity results suggest the use of \texttt{MAO} model when dealing with grid networks. However, once again, its run times limit its application. On the other hand, \texttt{IPM} was also able of finding good solutions and has the strong advantage of running in little time. Therefore, \texttt{IPM} seems a sound approach for this specific type of networks. As to the remaining formulations, the \texttt{AvDi} values associated to \texttt{MAR} are smaller but close to the above mentioned methods (with an average difference of 5\%, to both). Moreover, the \texttt{MiDi} results favor \texttt{MAR}. This information together with the good run times associated to this method, suggest that \texttt{MAR} is also worth considering in this context.

\subsection{Constrained formulations\label{sec:8.2}}
As shown in Table~\ref{tab:MAavdi}, the models obtained by adding the constraints (\ref{eq1}) to formulations (\ref{eqf1}), (\ref{eqf2}) and (\ref{eqf4}) produce solutions with better dissimilarity results, with few exceptions. Furthermore, the increase in the dissimilarities is bigger when more paths need to be found and bigger in the grids than in the random networks. 
As expected, the most significant differences occurred to the pair \texttt{MRA}--\texttt{MRAA}, with dissimilarity improvements of up to 10\% for the random networks and up to 244\% for the grid networks. As to the pair \texttt{MRO}--\texttt{MROA}, Table~\ref{tab:MAavdi} indicates also an improvement of the average dissimilarity of the solutions. However, the differences were smaller both in the cases of the random and of the grid networks (up to 6\% in the first case and to 24\% in the later). There were no changes in the dissimilarities obtained on the square grids.
Finally, the smallest difference was found for \texttt{MAR} and \texttt{MARA}. Even so, the improvements on the rectangular grids were quite significant. Again, no differences were registered in the dissimilarity results for square grids. Detailed information about the three models can be found in Appendix~\ref{app:2}. 

\begin{table}[htb]
\caption{Average dissimilarity variation for \texttt{MRAA}, \texttt{MROA} and \texttt{MARA} (\%)\label{tab:MAavdi}}
\centering
\small{
\begin{tabular}{@{}l c@{ }c@{ }c@{ }c@{ }c@{ }c@{ }c@{ }c  c@{ }c@{ }c@{ }c@{ }c@{ }c@{ }c@{ }c  c@{ }c@{ }c@{ }c@{ }c@{ }c@{ }c@{ }c@{}}\toprule
 & \multicolumn{8}{c}{\texttt{MRAA}} & \multicolumn{8}{c}{\texttt{MROA}} & \multicolumn{8}{c}{\texttt{MARA}}\\\cmidrule(lr){2-9} \cmidrule(lr){10-17}\cmidrule(l){18-25}
 & \multicolumn{8}{c}{$K$} & \multicolumn{8}{c}{$K$} & \multicolumn{8}{c}{$K$}\\\cmidrule(lr){2-9} \cmidrule(lr){10-17}\cmidrule(l){18-25}
 $R_{n,m}$ &  3 &  4 &  5 &  6 &  7 & 8 & 9 & 10   & 3 &  4 &  5 &  6 &  7 & 8 & 9 & 10 & 3 &  4 &  5 &  6 &  7 & 8 & 9 & 10\\\midrule
 $R_{100,500}$ & 1 & 1 & 2 & 4 & 6 & 7 & 7 & 12 &
 0 & 1 & 2 & 4 & 3 & 6 & 6 & 5 &
$-1$ & 1 & 1 & 2 & 2 & 2 & 2 & 2\\
$R_{100,1000}$ & -- & 2 & 2 &  3 & 2 & 3 & 3 & 10 &
-- & 0 & 1 & 2 & 2 & 3 & 4 & 4 &
-- & 1 & 1 & 0 & 0 & 1 & 1 & 1\\\midrule
$R_{300,1500}$ &  1 & 0 & 2 &  2 & 3 & 4 & 4 & 5 &
0 & 0 & 2 & 3 & 3 & 3 & 3 & 3 &
$-2$ & 0 & 0 & 1 & 1 & 1 & 2 & 2\\
$R_{300,3000}$ &  0 & 5 & 1 & 10 & 9 & 9 & 9 & 8 &
 0 & $-1$ & 1 & 1 & 2 & 3 & 3 & 4 &
$-1$ &  1 & 1 & 1 & 1 & 1 & 1 & 1\\\midrule
$R_{500,2500}$ &  0 & 1 & 1 &  3 & 2 & 4 & 5 & 6 &
1 & 0 & 0 & 2 & 2 & 3 & 2 & 2 &
1 & 0 & 0 & 1 & 0 & 1 & 1 & 1\\
$R_{500,5000}$ &  2 & 2 & 1 &  1 & 2 & 2 & 3 & 4 &
1 & 1 & 1 & 0 & 1 & 1 & 2 & 2 &
0 & 0 & 1 & 1 & 0 & 1 & 1 & 1\\\bottomrule
\end{tabular}\hfill
\begin{tabular}{@{}l c@{ }c@{ }c@{ }c@{ }c@{ }c@{ }c@{ }c  c@{ }c@{ }c@{ }c@{ }c@{ }c@{ }c@{ }c  c@{ }c@{ }c@{ }c@{ }c@{ }c@{ }c@{ }c@{}}\toprule
    & \multicolumn{8}{c}{\texttt{MRAA}} & \multicolumn{8}{c}{\texttt{MROA}} & \multicolumn{8}{c}{\texttt{MARA}}\\\cmidrule(lr){2-9} \cmidrule(lr){10-17}\cmidrule(l){18-25}
    & \multicolumn{8}{c}{$K$} & \multicolumn{8}{c}{$K$} & \multicolumn{8}{c}{$K$}\\\cmidrule(lr){2-9} \cmidrule(lr){10-17}\cmidrule(l){18-25}
  $G_{p,q}$ &  3 &  4 &  5 &  6 &  7 & 8 & 9 & 10   & 3 &  4 &  5 &  6 &  7 & 8 & 9 & 10 & 3 &  4 &  5 &  6 &  7 & 8 & 9 & 10\\\midrule
 $G_{3,12}$ & 0 & 3 & 37 & 138 & 158 & 202 & 218 & 244 & 0 & 3 & 4 & 16 & 18 & 23 & 22 & 24 & 0 & 2 & 1 & 11 &  6 & 10 & 13 & 15 \\
 $G_{4,36}$ & 0 & 0 & 24 &  38 &  31 &  16 &  73 & 28  & 0 & 0 & 0 &  0 &  0 & 16 & 14 & 24 & 0 & 0 & 0 &  0 & $-2$ & 14 & 19 & 15 \\
  $G_{6,6}$ & 0 & 8 & 13 &   0 &   0 &  64 &  60 & 61  & 0 & 0 & 0 &  0 &  0 &  0 &  0 & 0 & 0 & 0 & 0 &  0 &  0 &  0 &  0 & 0 \\
$G_{12,12}$ & 0 & 0 &  0 &   0 &   0 &   1 &   0 & 21 & 0 & 0 & 0 &  0 &  0 &  0 &  0 & 0 & 0 & 0 & 0 &  0 &  0 &  0 &  0 & 0 \\ \bottomrule
\end{tabular}}
\end{table}

In the following, we analyze the trade-off between the improvements in the dissimilarities and the variations in the run time, identified in Section~\ref{sec:7.1}, to assess to what extent the constrained models are worth considering. The new models are compared to \texttt{IPM} and \texttt{MAO}.
The code \texttt{MROA} was ruled out of the study, because it follows closely \texttt{MARA} but always with some disadvantage.

\begin{figure}[htb]
\centering
\begin{subfigure}[h]{0.4\textwidth}
\centering
	\input{Images/avdi_ra_all.tex}	
\end{subfigure}\hfill
\begin{subfigure}[h]{0.6\textwidth}
\centering
{\small
\begin{tabular}{@{}lc@{ }c@{ }c@{ }c@{ }c@{ }c@{ }c@{ }c@{ }cl@{}}\toprule
 &\multicolumn{8}{c}{$K$}\\\cmidrule(lr){2-9}
 & 3 & 4 & 5 & 6 & 7 & 8 & 9 & 10\\\midrule
 \texttt{MAO} & 0.923 & 0.948 & 0.951 & 0.952 & 0.950 & 0.950 & 0.949 & 0.944\\
\texttt{MRAA} & 0.923 & 0.947 & 0.949 & 0.948 & 0.942 & 0.941 & 0.939 & 0.933\\
\texttt{MARA} & 0.929 & 0.954 & 0.950 & 0.952 & 0.948 & 0.945 & 0.943 & 0.940\\
\texttt{IPM} & 0.812 & 0.870 & 0.891 & 0.896 & 0.903  & 0.907 & 0.906  & 0.904\\\bottomrule
\end{tabular}}
\end{subfigure}	
\vspace*{-.3cm}
\caption{Average \texttt{AvDi} values of the constrained formulations in random networks\label{fig:avdi_ra_all}}
\end{figure}

\begin{figure}[htb]
\centering
\begin{subfigure}[h]{0.45\textwidth}
\centering
	\input{Images/avdi_ra_all2.tex}	
\end{subfigure}\hfill
\begin{subfigure}[h]{0.45\textwidth}
\centering
{\small
\begin{tabular}{@{}lccccl@{}}\toprule
	 & \texttt{MAO} & \texttt{MRAA} & \texttt{MARA} & \texttt{IPM}\\\midrule
	Min. value & 0.855 & 0.828 & 0.862  & 0.712\\
	$1^{st}$ quartile & 0.942 & 0.941 & 0.944 & 0.862\\
	Median  & 0.956 & 0.952 & 0.957  &0.904\\
	$3^{rd}$ quartile  & 0.968 & 0.967 & 0.966 & 0.918\\
	Max. value & 0.979 & 0.979 & 0.978 & 0.947\\\bottomrule
\end{tabular}}
\end{subfigure}	
\vspace*{-.3cm}
\caption{\texttt{AvDi} dispersion of the constrained formulations in random networks \label{fig:avdi_ra_all_q}}
\end{figure}

Figure~\ref{fig:avdi_ra_all} depicts the variation of the average \texttt{AvDi} for \texttt{MRAA}, \texttt{MRAA} and \texttt{IPM} in the random networks.
In spite of the improvement on the average dissimilarities associated to the \texttt{MRAA} when compared to \texttt{MRA} (see Figures \ref{fig:avdi_r_all} and \ref{fig:avdi_ra_all}), the constrained version of \texttt{MAR} is still the best, in average. The differences, however, are now very tenuous.

Comparing Figures \ref{fig:avdi_r_all_q} and \ref{fig:avdi_ra_all_q} reveals a reduction in the dispersion of the results of both \texttt{MARA} and \texttt{MRAA}. Nevertheless, \texttt{MARA} outperforms \texttt{MRAA} and performs even better than \texttt{MAO} regarding the worst cases. Again, \texttt{IPM} produces the poorest results. 

\begin{figure}[htb]
\centering
\begin{subfigure}[h]{0.4\textwidth}
	\centering
	\input{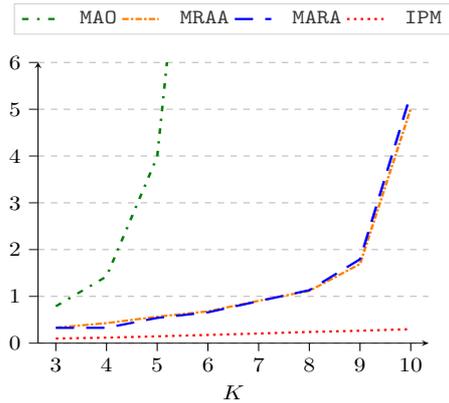}	
\end{subfigure}\hfill
\begin{subfigure}[h]{0.6\textwidth}
\centering
{\small
\begin{tabular}{@{}lc@{ }c@{ }c@{ }c@{ }c@{ }c@{ }c@{ }c@{ }cl@{}}\toprule
	&\multicolumn{8}{c}{$K$}\\\cmidrule(lr){2-9}
		& 3 & 4 & 5 & 6 & 7 & 8 & 9 & 10\\\midrule
	\texttt{MAO}  & 0.079 & 1.426 & 3.985 & 14.464 & 40.488 & 85.412 & 132.258 & 179.631\\
	\texttt{MRAA} & 0.327 & 0.329 & 0.540 & 0.659 & 0.898 & 1.133 & 1.794  & 5.320\\
	\texttt{MARA} & 0.333  & 0.429 & 0.566 & 0.681 & 0.906 & 1.119 & 1.693 & 5.014\\
	\texttt{IPM}  & 0.100 & 0.118 & 0.147 & 0.176 & 0.207 & 0.240 & 0.266 & 0.300\\\bottomrule
\end{tabular}}
\end{subfigure}	
\vspace*{-.3cm}
\caption{Run times of the constrained formulations in random networks (seconds)\label{fig:cpu_ra}}
\end{figure}

Figure \ref{fig:cpu_ra} reports the average run times of the models. As shown in Section \ref{sec:7.1}, adding the constraints (\ref{eq1}) affected \texttt{MAR} more than \texttt{MRA}. In fact, \texttt{MARA} and \texttt{MRAA} are now very close -- \texttt{MRAA} runs faster for the smaller networks and \texttt{MARA} for the larger. It is worth mentioning the increase of the run times from 1 to 5 seconds for the biggest instances. As expected, \texttt{IPM} is faster than the other codes.

\begin{figure}[htb]
\centering
\begin{subfigure}[h]{0.4\textwidth}
	\centering
	\input{Images/avdi_ga_all.tex}	
\end{subfigure}\hfill
\begin{subfigure}[h]{0.6\textwidth}
\centering
{\small
\begin{tabular}{@{}lc@{ }c@{ }c@{ }c@{ }c@{ }c@{ }c@{ }c@{ }cl@{}}\toprule
 &\multicolumn{8}{c}{$K$}\\\cmidrule(lr){2-9}
 & 3 & 4 & 5 & 6 & 7 & 8 & 9 & 10\\\midrule
 \texttt{MAO} & 0.959 & 0.930 & 0.887 & 0.874 & 0.854 & 0.845   & 0.833 & 0.827\\
\texttt{MRAA} & 0.959 & 0.930 & 0.871 & 0.838 & 0.813 & 0.808   & 0.779  & 0.775\\
\texttt{MARA} & 0.959 & 0.930 & 0.881 & 0.860 & 0.816 & 0.825   & 0.803  & 0.797\\
 \texttt{IPM} & 0.958 & 0.911 & 0.886 & 0.868 & 0.848 & 0.837 & 0.827 & 0.823\\\bottomrule
\end{tabular}}
\end{subfigure}	
\vspace*{-.3cm}
\caption{Average \texttt{AvDi} values of the constrained formulations in grid networks\label{fig:avdi_ga_all}}
\end{figure}
 
\begin{figure}[htb]
\centering
\begin{subfigure}[h]{0.45\textwidth}
\centering
	\input{Images/avdi_ga_all2.tex}	
\end{subfigure}\hfill
\begin{subfigure}[h]{0.45\textwidth}
\centering
{\small
\begin{tabular}{@{}lccccl@{}}\toprule
	 & \texttt{MAO} & \texttt{MRAA} & \texttt{MARA} & \texttt{IPM}\\\midrule
	Min. value & 0.730 & 0.641 & 0.686 & 0.728\\
	$1^{st}$ quartile & 0.830 & 0.737 & 0.783& 0.814\\
	Median  & 0.879 & 0.879 & 0.879& 0.879\\
	$3^{rd}$ quartile  & 0.936 & 0.936 & 0.936& 0.933\\
	Max. value & 0.982 & 0.982 & 0.982& 0.982\\\bottomrule
\end{tabular}}
\end{subfigure}	
\vspace*{-.5cm}
\caption{\texttt{AvDi} dispersion of the constrained formulations in grid networks\label{fig:avdi_ga_all_q}}
\end{figure}
 
In short, both \texttt{MRAA} and \texttt{MARA} seem tailored for this type of network: the average difference in the dissimilarities towards \texttt{MAO} is of 0.50\% for \texttt{MRAA} and of 0.08\% for \texttt{MARA}; as to the run times, \texttt{MRAA} is in advantage for $K\leq 7$ and \texttt{MARA} for the remaining values of $K$. However, taking into account the earlier good dissimilarities and run times of \texttt{MAR}, it not is clear whether its constrained model is the best option.

Figure~\ref{fig:avdi_ga_all} presents the average \texttt{AvDi} for \texttt{MRAA}, \texttt{MARA}, \texttt{IPM} and \texttt{MOA} in grid networks. In spite of the significant improvement for the pair \texttt{MRA}--\texttt{MRAA} (see Figure \ref{fig:avdi_gd_all}), \texttt{MARA} still produced solutions with higher average dissimilarities. In addition, it solved all the instances to optimality, which did not happen for \texttt{MRAA} (it was not possible to find 10 paths in $G_{4,36}$ within 300 seconds). Nevertheless, \texttt{MAO} provides the solutions with the best average dissimilarities, now followed closely by \texttt{IPM}.
On the other hand, there are reductions in the dispersion of the results for the pairs \texttt{MRA}--\texttt{MRAA} and \texttt{MAR}--\texttt{MARA}, when comparing Figures \ref{fig:avdi_gd_all_q} and \ref{fig:avdi_ga_all_q}.
Furthermore, as the most significant improvements occurred in the lower quartiles, it can be concluded that both \texttt{MRAA} and \texttt{MARA} are less likely to produce solutions with low dissimilarities than \texttt{MRA} and \texttt{MAR} for this type of network. Nevertheless, the best results are, again, associated to \texttt{MAO} and \texttt{IPM}.

\begin{figure}[htb]
\centering
\begin{subfigure}[h]{0.4\textwidth}
	\centering
	\input{Images/cpu_ga.tex}	
\end{subfigure}\hfill
\begin{subfigure}[h]{0.6\textwidth}
\centering
{\small
\begin{tabular}{@{}lc@{ }c@{ }c@{ }c@{ }c@{ }c@{ }c@{ }c@{ }cl@{}}\toprule
	&\multicolumn{8}{c}{$K$}\\\cmidrule(lr){2-9}
	& 3 & 4 & 5 & 6 & 7 & 8 & 9 & 10\\\midrule
	\texttt{MAO}  & 0.158 & 1.019 & 79.534 & 96.118 & 300    & 300   & 300   & 300\\
	\texttt{MRAA} & 0.055 & 0.059 & 0.667  & 5.111  & 11.194 & 1.702 & 3.270 & 76.915\\
	\texttt{MARA} & 0.043 & 0.054 & 0.101  & 0.093  & 0.233  & 0.155 & 0.260 & 0.404\\
	\texttt{IPM}  & 0.004 & 0.006 & 0.007  & 0.007  & 0.010  & 0.011 & 0.014 & 0.015\\\bottomrule
\end{tabular}}
\end{subfigure}	
\vspace*{-.3cm}
\caption{Run times for the constrained formulations in grid networks (seconds)\label{fig:cpu_g}}
\end{figure}

According to Figure \ref{fig:cpu_g}, \texttt{MRAA} is much slower than \texttt{MARA} for this type of instances, which is, in turn, much slower than \texttt{IPM}.
Thus, even though \texttt{MARA} has good run times it is clearly overtaken by \texttt{IPM}. As mentioned earlier, in general, the run times associated to \texttt{MAO} are very high. Tables~\ref{tab:MRAAcpu0} and \ref{tab:MARAcpu0} in Appendix \ref{app:2} give detailed run times of the \texttt{MRAA} and \texttt{MARA} models.

The previous discussion confirms \texttt{IPM} as the most suitable method for solving instances in grid networks. \texttt{IPM} is the fastest of the codes being compared and produced the solutions with the highest overall average dissimilarity. On the other hand, the results associated to \texttt{MARA} are also to be considered. In average the difference towards \texttt{MAO} is 2\%. This, together with the very reasonable run times, makes \texttt{MARA} also interesting for this type of network. In Section \ref{sec:8.1}  we also pointed out \texttt{MAR} as an interesting option for this type of network. However, comparing to \texttt{MARA}, we realize that \texttt{MAR} offers worse dissimilarities in the same amount of time.

\section{Concluding remarks\label{sec:9}}    
This manuscript addresses the search for $K$ dissimilar paths connecting a given pair of nodes in a directed graph.
Four integer formulations were introduced. The formulations have different motivations, but their general goal is to minimize the number of arcs that appear in more than one path or the total number of those overlaps, while searching for sets of paths with good dissimilarity.
The inclusion of an additional set of constraints to the previous formulations, with the goal of improving the solutions dissimilarity, was also proposed.
The performance of the new formulations and of a traditional method in the literature, the iterative penalty method, was tested  over random and grid networks, assessing the required run time as well as the average and the minimum dissimilarities of the solutions.
The dissimilarity of a given solution is the average of the pairwise dissimilarity between their paths, which is measured based on the ratios between the number of overlaps of the pair of paths and the length of the involved paths.
Three classes can be considered with this respect:
\begin{itemize}
\item
The code \texttt{IPM}, which not surprisingly is the fastest, given that it simply solves $K$ shortest path problems. Besides being the fastest code, \texttt{IPM} provided solutions with good dissimilarity for the grid networks. On the other hand, the solutions produced by this method for the random networks are very poor as to the dissimilarity of the $K$ paths.
\item
The code \texttt{MAO}, clearly the slowest of the five codes and often unable of finding an optimal solution within the established 300 seconds. In spite of this drawback, in general, \texttt{MAO} produced the solutions with the best dissimilarity, both for the random and the grid networks. 
\item
And the final group, composed of codes \texttt{MAR}, \texttt{MRAA} and \texttt{MARA}, which also provide solutions of good quality with regard to the dissimilarity. The three codes have similar behaviors for the random networks, however, for the grid networks, the latter unequivocally outperforms the others. Unlike \texttt{MRAA}, both \texttt{MAR} and \texttt{MARA} were able to find the optimum within the time bound for all instances. Furthermore, the run times of \texttt{MAR} and \texttt{MARA} are much smaller than the run times of the remaining codes in this group. 
\end{itemize}

It is worth noting that, the introduced formulations can still be extended to a case where the overlaps are penalized according to a given cost for the involved arcs, with no significant difference of the implementation.

In the future it would be interesting to investigate a simpler/smaller alternative formulation to \texttt{MAO}, accounting for the number of overlaps between each pair of paths, given that this is more in accordance with the dissimilarity measure that was used for assessing the solutions. 

Another important line of research, would be to clarify the relation between the dissimilarity and run time results of the formulations and some influencing characteristics of the networks, such as the density, average degree and topology. This aspect would allow a better use of the new models, particularly in the context of real world applications of the problem, for, in those cases, a wider variety of networks can arise. 
Finally, also the extension of the introduced formulations to bi-objective problems involving the minimization of a cost function besides the minimization of the arc repetitions seems to be of interest.

\paragraph{Acknowledgment}
This work was partially financially supported by the Portuguese Foundation for Science and Technology (FCT) under project grants UID/MAT/04561/2019, UID/MAT/\-00324/2020 and UID/MULTI/00308/2020.
The work was also partially supported by project P2020 SAICTPAC/0011/2015, co-financed by COMPETE 2020, Portugal 2020 -- Operational Program for Competitiveness and Internationalization (POCI), European Union's European Regional Development Fund, and FCT, and by FEDER Funds and National Funds under project CENTRO-01-0145-FEDER-029312.

\bibliographystyle{elsarticle-harv}
\bibliography{ref}

\begin{thebibliography}{33}
\expandafter\ifx\csname natexlab\endcsname\relax\def\natexlab#1{#1}\fi
\providecommand{\url}[1]{\texttt{#1}}
\providecommand{\href}[2]{#2}
\providecommand{\path}[1]{#1}
\providecommand{\DOIprefix}{doi:}
\providecommand{\ArXivprefix}{arXiv:}
\providecommand{\URLprefix}{URL: }
\providecommand{\Pubmedprefix}{pmid:}
\providecommand{\doi}[1]{\href{http://dx.doi.org/#1}{\path{#1}}}
\providecommand{\Pubmed}[1]{\href{pmid:#1}{\path{#1}}}
\providecommand{\bibinfo}[2]{#2}
\ifx\xfnm\relax \def\xfnm[#1]{\unskip,\space#1}\fi
\bibitem[{Akg\"{u}n et~al.(2000)Akg\"{u}n, Erkut and Batta}]{Akgun_2000}
\bibinfo{author}{Akg\"{u}n, V.}, \bibinfo{author}{Erkut, E.},
  \bibinfo{author}{Batta, R.}, \bibinfo{year}{2000}.
\newblock \bibinfo{title}{On finding dissimilar paths}.
\newblock \bibinfo{journal}{European Journal of Operational Research}
  \bibinfo{volume}{121}, \bibinfo{pages}{232--246}.
\bibitem[{Bhandari(1998)}]{Bhandari_1998}
\bibinfo{author}{Bhandari, R.}, \bibinfo{year}{1998}.
\newblock \bibinfo{title}{Survivable Networks: Algorithms for Diverse Routing}.
\newblock \bibinfo{publisher}{Kluwer Academic Publishers},
  \bibinfo{address}{USA}.
\bibitem[{Calvo and Cordone(2003)}]{CalvoCOR_2003}
\bibinfo{author}{Calvo, R.}, \bibinfo{author}{Cordone, R.},
  \bibinfo{year}{2003}.
\newblock \bibinfo{title}{A heuristic approach to the overnight security
  service problem}.
\newblock \bibinfo{journal}{Computers \& Operations Research}
  \bibinfo{volume}{30}, \bibinfo{pages}{1269--1287}.
\bibitem[{Caramia et~al.(2010)Caramia, Giordani and
  Iovanella}]{CaramiaJMM_2010}
\bibinfo{author}{Caramia, M.}, \bibinfo{author}{Giordani, S.},
  \bibinfo{author}{Iovanella, A.}, \bibinfo{year}{2010}.
\newblock \bibinfo{title}{On the selection of k routes in multiobjective hazmat
  route planning}.
\newblock \bibinfo{journal}{IMA Journal of Management Mathematics}
  \bibinfo{volume}{21}, \bibinfo{pages}{239--251}.
\bibitem[{Carotenuto et~al.(2007a)Carotenuto, Giordani and
  Ricciardelli}]{Carotenuto_2007a}
\bibinfo{author}{Carotenuto, P.}, \bibinfo{author}{Giordani, S.},
  \bibinfo{author}{Ricciardelli, S.}, \bibinfo{year}{2007}a.
\newblock \bibinfo{title}{Finding minimum and equitable risk routes for hazmat
  shipments}.
\newblock \bibinfo{journal}{Computers \& Operations Research}
  \bibinfo{volume}{34}, \bibinfo{pages}{1304--1327}.
\bibitem[{Carotenuto et~al.(2007b)Carotenuto, Giordani, Ricciardelli and
  Rismondo}]{Carotenuto_2007b}
\bibinfo{author}{Carotenuto, P.}, \bibinfo{author}{Giordani, S.},
  \bibinfo{author}{Ricciardelli, S.}, \bibinfo{author}{Rismondo, S.},
  \bibinfo{year}{2007}b.
\newblock \bibinfo{title}{A tabu search approach for scheduling hazmat
  shipments}.
\newblock \bibinfo{journal}{Computers \& Operations Research}
  \bibinfo{volume}{34}, \bibinfo{pages}{1328--1350}.
\bibitem[{Constantino et~al.(2017)Constantino, Mour{\~a}o and
  Pinto}]{Constantino_2017}
\bibinfo{author}{Constantino, M.}, \bibinfo{author}{Mour{\~a}o, M.C.},
  \bibinfo{author}{Pinto, L.}, \bibinfo{year}{2017}.
\newblock \bibinfo{title}{Dissimilar arc routing problems}.
\newblock \bibinfo{journal}{Networks} \bibinfo{volume}{70},
  \bibinfo{pages}{233--245}.
\bibitem[{Cormen et~al.(2009)Cormen, Leiserson, Rivest and Stein}]{Cormen_2009}
\bibinfo{author}{Cormen, T.}, \bibinfo{author}{Leiserson, C.},
  \bibinfo{author}{Rivest, R.}, \bibinfo{author}{Stein, C.},
  \bibinfo{year}{2009}.
\newblock \bibinfo{title}{Introduction to algorithms}.
\newblock \bibinfo{publisher}{The MIT press}.
\bibitem[{Dell'Olmo et~al.(2005)Dell'Olmo, Gentili and
  Scozzari}]{DellOlmo_2005}
\bibinfo{author}{Dell'Olmo, P.}, \bibinfo{author}{Gentili, M.},
  \bibinfo{author}{Scozzari, A.}, \bibinfo{year}{2005}.
\newblock \bibinfo{title}{On finding dissimilar {Pareto}-optimal paths}.
\newblock \bibinfo{journal}{European Journal of Operational Research}
  \bibinfo{volume}{162}, \bibinfo{pages}{70--82}.
\bibitem[{Eppstein(1998)}]{eppstein_1998}
\bibinfo{author}{Eppstein, D.}, \bibinfo{year}{1998}.
\newblock \bibinfo{title}{Finding the $k$ shortest paths}.
\newblock \bibinfo{journal}{SIAM Journal on Computing} \bibinfo{volume}{28},
  \bibinfo{pages}{652--673}.
\bibitem[{Erkut(1990)}]{Erkut_1990}
\bibinfo{author}{Erkut, E.}, \bibinfo{year}{1990}.
\newblock \bibinfo{title}{The discrete $p$-dispersion problem}.
\newblock \bibinfo{journal}{European Journal of Operational Research}
  \bibinfo{volume}{46}, \bibinfo{pages}{48--60}.
\bibitem[{Erkut and Verter(1998)}]{Erkut_1998}
\bibinfo{author}{Erkut, E.}, \bibinfo{author}{Verter, V.},
  \bibinfo{year}{1998}.
\newblock \bibinfo{title}{Modeling of transport risk for hazardous materials}.
\newblock \bibinfo{journal}{Operations Research} \bibinfo{volume}{46},
  \bibinfo{pages}{625--642}.
\bibitem[{Frank(1985)}]{frank_1985}
\bibinfo{author}{Frank, A.}, \bibinfo{year}{1985}.
\newblock \bibinfo{title}{Edge-disjoint paths in planar graphs}.
\newblock \bibinfo{journal}{Journal of Combinatorial Theory, Series B}
  \bibinfo{volume}{39}, \bibinfo{pages}{164--178}.
\bibitem[{Gomes and Craveirinha(2010)}]{GomesJTIT_2010}
\bibinfo{author}{Gomes, T.}, \bibinfo{author}{Craveirinha, J.},
  \bibinfo{year}{2010}.
\newblock \bibinfo{title}{An algorithm for enumerating srlg diverse path
  pairs}.
\newblock \bibinfo{journal}{Journal of Telecommunications and Information
  Technology} , \bibinfo{pages}{5--12}.
\bibitem[{Gomes et~al.(2016)Gomes, Jorge, Melo and
  Gir{\~a}o-Silva}]{GomesPNC_2016}
\bibinfo{author}{Gomes, T.}, \bibinfo{author}{Jorge, L.},
  \bibinfo{author}{Melo, P.}, \bibinfo{author}{Gir{\~a}o-Silva, R.},
  \bibinfo{year}{2016}.
\newblock \bibinfo{title}{Maximally node and srlg-disjoint path pair of min-sum
  cost in gmpls networks: a lexicographic approach}.
\newblock \bibinfo{journal}{Photonic Network Communications}
  \bibinfo{volume}{31}, \bibinfo{pages}{11--22}.
\bibitem[{Gopalan et~al.(1990)Gopalan, Kolluri, Batta and
  Karwan}]{GopalanOR_1990}
\bibinfo{author}{Gopalan, R.}, \bibinfo{author}{Kolluri, K.},
  \bibinfo{author}{Batta, R.}, \bibinfo{author}{Karwan, M.},
  \bibinfo{year}{1990}.
\newblock \bibinfo{title}{Modeling equity of risk in the transportation of
  hazardous materials}.
\newblock \bibinfo{journal}{Operations research} \bibinfo{volume}{38},
  \bibinfo{pages}{961--973}.
\bibitem[{Guruswami et~al.(2003)Guruswami, Khanna, Rajaraman, Shepherd and
  Yannakakis}]{guruswami_2003}
\bibinfo{author}{Guruswami, V.}, \bibinfo{author}{Khanna, S.},
  \bibinfo{author}{Rajaraman, R.}, \bibinfo{author}{Shepherd, B.},
  \bibinfo{author}{Yannakakis, M.}, \bibinfo{year}{2003}.
\newblock \bibinfo{title}{Near-optimal hardness results and approximation
  algorithms for edge-disjoint paths and related problems}.
\newblock \bibinfo{journal}{Journal of Computer and System Sciences}
  \bibinfo{volume}{67}, \bibinfo{pages}{473--496}.
\bibitem[{Hoffman and Pavley(1959)}]{hoffman_1959}
\bibinfo{author}{Hoffman, W.}, \bibinfo{author}{Pavley, R.},
  \bibinfo{year}{1959}.
\newblock \bibinfo{title}{A method for the solution of the $n$-th best path
  problem}.
\newblock \bibinfo{journal}{Journal of the {ACM}} \bibinfo{volume}{6},
  \bibinfo{pages}{506--514}.
\newblock \DOIprefix\doi{10.1145/320998.321004}.
\bibitem[{Iqbal and Kuipers(2015)}]{Iqbal_2015}
\bibinfo{author}{Iqbal, F.}, \bibinfo{author}{Kuipers, F.},
  \bibinfo{year}{2015}.
\newblock \bibinfo{title}{Wiley Encyclopedia of Electrical and Electronics
  Engineering}. \bibinfo{publisher}{Wiley}, \bibinfo{address}{New York}.
  chapter \bibinfo{chapter}{Disjoint paths in networks}.
\newblock pp. \bibinfo{pages}{1--14}.
\bibitem[{Jim{\'e}nez and Marzal(1999)}]{jimenez_1999}
\bibinfo{author}{Jim{\'e}nez, V.}, \bibinfo{author}{Marzal, A.},
  \bibinfo{year}{1999}.
\newblock \bibinfo{title}{Computing the $k$ shortest paths: A new algorithm and
  an experimental comparison}, in: \bibinfo{booktitle}{International Workshop
  on Algorithm Engineering}, \bibinfo{organization}{Springer}. pp.
  \bibinfo{pages}{15--29}.
\bibitem[{Johnson et~al.(1992)Johnson, Joy and Clarke}]{Johnson_1992}
\bibinfo{author}{Johnson, P.}, \bibinfo{author}{Joy, D.},
  \bibinfo{author}{Clarke, D.}, \bibinfo{year}{1992}.
\newblock \bibinfo{title}{Highway 3.01, an enhancement routing model: program,
  description, methodology and revised user’s manual}.
\newblock \bibinfo{journal}{Arbeitsbericht, Oak Ridge National Laboratories,
  Washington, DC} .
\bibitem[{Katoh et~al.(1982)Katoh, Ibaraki and Mine}]{katoh_1982}
\bibinfo{author}{Katoh, N.}, \bibinfo{author}{Ibaraki, T.},
  \bibinfo{author}{Mine, H.}, \bibinfo{year}{1982}.
\newblock \bibinfo{title}{An efficient algorithm for $k$ shortest simple
  paths}.
\newblock \bibinfo{journal}{Networks} \bibinfo{volume}{12},
  \bibinfo{pages}{411--427}.
\bibitem[{Kuby et~al.(1997)Kuby, Zhongyi and Xiaodong}]{Kuby_1997}
\bibinfo{author}{Kuby, M.}, \bibinfo{author}{Zhongyi, X.},
  \bibinfo{author}{Xiaodong, X.}, \bibinfo{year}{1997}.
\newblock \bibinfo{title}{A minimax method for finding the $k$ best
  “differentiated” paths}.
\newblock \bibinfo{journal}{Geographical Analysis} \bibinfo{volume}{29},
  \bibinfo{pages}{298--313}.
\bibitem[{Lombard and Church(1993)}]{Lombard_1993}
\bibinfo{author}{Lombard, K.}, \bibinfo{author}{Church, R.},
  \bibinfo{year}{1993}.
\newblock \bibinfo{title}{The gateway shortest path problem: Generating
  alternative routes for a corridor location problem}.
\newblock \bibinfo{journal}{Geographical Systems} \bibinfo{volume}{1},
  \bibinfo{pages}{25--45}.
\bibitem[{Mart{\'\i} et~al.(2009)Mart{\'\i}, Gonz\'alez-Velarde and
  Duarte}]{MARTI_2009}
\bibinfo{author}{Mart{\'\i}, R.}, \bibinfo{author}{Gonz\'alez-Velarde, J.},
  \bibinfo{author}{Duarte, A.}, \bibinfo{year}{2009}.
\newblock \bibinfo{title}{Heuristics for the bi-objective path dissimilarity
  problem}.
\newblock \bibinfo{journal}{Computers \& Operations Research}
  \bibinfo{volume}{36}, \bibinfo{pages}{2905--2912}.
\bibitem[{Martins(1984)}]{martins_1984}
\bibinfo{author}{Martins, E.}, \bibinfo{year}{1984}.
\newblock \bibinfo{title}{An algorithm for ranking paths that may contain
  cycles}.
\newblock \bibinfo{journal}{European Journal of Operational Research}
  \bibinfo{volume}{18}, \bibinfo{pages}{123--130}.
\bibitem[{Martins and Pascoal(2003)}]{martins_2003}
\bibinfo{author}{Martins, E.}, \bibinfo{author}{Pascoal, M.},
  \bibinfo{year}{2003}.
\newblock \bibinfo{title}{A new implementation of {Yen}’s ranking loopless
  paths algorithm}.
\newblock \bibinfo{journal}{Quarterly Journal of the Belgian, French and
  Italian Operations Research Societies} \bibinfo{volume}{1},
  \bibinfo{pages}{121--133}.
\bibitem[{Martins et~al.(1999)Martins, Pascoal and Santos}]{Martins_1999}
\bibinfo{author}{Martins, E.}, \bibinfo{author}{Pascoal, M.},
  \bibinfo{author}{Santos, J.}, \bibinfo{year}{1999}.
\newblock \bibinfo{title}{Deviation algorithms for ranking shortest paths}.
\newblock \bibinfo{journal}{International Journal of Foundations of Computer
  Science} \bibinfo{volume}{10}, \bibinfo{pages}{247--261}.
\bibitem[{Suurballe(1974)}]{Suurballe_1974}
\bibinfo{author}{Suurballe, J.}, \bibinfo{year}{1974}.
\newblock \bibinfo{title}{Disjoint paths in a network}.
\newblock \bibinfo{journal}{Networks} \bibinfo{volume}{4},
  \bibinfo{pages}{125--145}.
\bibitem[{Suurballe and Tarjan(1984)}]{Suurballe_1984}
\bibinfo{author}{Suurballe, J.}, \bibinfo{author}{Tarjan, R.},
  \bibinfo{year}{1984}.
\newblock \bibinfo{title}{A quick method for finding shortest pairs of disjoint
  paths}.
\newblock \bibinfo{journal}{Networks} \bibinfo{volume}{14},
  \bibinfo{pages}{325--336}.
\bibitem[{Vygen(1995)}]{vygen_1995}
\bibinfo{author}{Vygen, J.}, \bibinfo{year}{1995}.
\newblock \bibinfo{title}{{NP}-completeness of some edge-disjoint paths
  problems}.
\newblock \bibinfo{journal}{Discrete Applied Mathematics} \bibinfo{volume}{61},
  \bibinfo{pages}{83--90}.
\bibitem[{Yen(1971)}]{yen_1971}
\bibinfo{author}{Yen, J.}, \bibinfo{year}{1971}.
\newblock \bibinfo{title}{Finding the $k$ shortest loopless paths in a
  network}.
\newblock \bibinfo{journal}{Management Science} \bibinfo{volume}{17},
  \bibinfo{pages}{712--716}.
\bibitem[{Zajac(2018)}]{Zajac_2018}
\bibinfo{author}{Zajac, S.}, \bibinfo{year}{2018}.
\newblock \bibinfo{title}{On a two-phase solution approach for the bi-objective
  $k$-dissimilar vehicle routing problem}.
\newblock \bibinfo{journal}{Journal Heuristics} \bibinfo{volume}{24},
  \bibinfo{pages}{515--550}.

\end{thebibliography}

\appendix
\section{Dissimilarities for the unconstrained formulations\label{app:1}}
\begin{table}[htb]
\centering
\caption{Average \texttt{AvDi} and \texttt{MiDi} of \texttt{MAO} in random networks\label{tab:mao_AvDi_MiDi_r}}
{\scriptsize{
\begin{tabular}{@{}lcccccccccl@{}}\toprule
	\texttt{AvDi}&\multicolumn{9}{c}{$K$} \\\cmidrule(lr){2-9}
	 $R_{n,m}$ 	& 3 & 4 & 5 & 6 & 7 & 8 & 9 & 10\\\cmidrule(lr){1-9}
$R_{100,500}$   & 0.855  & 0.913  & 0.911   & 0.904  & 0.902  & 0.891  & 0.887  & 0.886  \\
$R_{100,1000}$  & --     & 0.942  & 0.965   & 0.967  & 0.968  & 0.973  & 0.970  & 0.970\\\cmidrule(lr){1-9}
$R_{300,1500}$  & 0.922  & 0.947  & 0.952   & 0.954  & 0.946  & 0.947  & 0.940  & 0.937  \\
$R_{300,3000}$  & 0.979  & 0.970  & 0.976   & 0.972  & 0.971  & 0.972  & 0.975  & 0.973\\\cmidrule(lr){1-9}
$R_{500,2500}$  & 0.957  & 0.967  & 0.961   & 0.961  & 0.956  & 0.958  & 0.952  & 0.947  \\
$R_{500,5000}$  & 0.900  & 0.948  & 0.940   & 0.953  & 0.956  & 0.957  & 0.967  & 0.958\\\midrule
Average         & 0.923  & 0.948  & 0.951   &  0.951 & 0.952  & 0.950  & 0.950  & 0.950  & \fbox{0.944}\\\bottomrule
\end{tabular}
\begin{tabular}{@{}lcccccccccl@{}}\toprule
		\texttt{MiDi}&\multicolumn{9}{c}{$K$} \\\cmidrule(lr){2-9}
	$R_{n,m}$	& 3 & 4 & 5 & 6 & 7 & 8 & 9 & 10\\\cmidrule(lr){1-9}
$R_{100,500}$   & 0.855  & 0.763  & 0.720   & 0.691  & 0.650  & 0.615   & 0.555  & 0.547  \\
$R_{100,1000}$  & --     & 0.654  & 0.750   & 0.740  & 0.700  & 0.698   & 0.682  & 0.684\\\cmidrule(lr){1-9}
$R_{300,1500}$  & 0.822  & 0.813  & 0.801   & 0.782  & 0.763  & 0.745   & 0.704  & 0.705  \\
$R_{300,3000}$  & 0.938  & 0.850  & 0.829   & 0.751  & 0.723  & 0.691   & 0.721  & 0.707\\\cmidrule(lr){1-9}
$R_{500,2500}$  & 0.887  & 0.882  & 0.822   & 0.814  & 0.774  & 0.792   & 0.730  & 0.720  \\
$R_{500,5000}$  & 0.819  & 0.862  & 0.819   & 0.794  & 0.778  & 0.760   & 0.780  & 0.756\\\midrule
Average         & 0.864  & 0.804  & 0.790   & 0.761  & 0.731  & 0.731   & 0.717  & 0.696  & \fbox{0.687}\\\bottomrule
\end{tabular}}}
\end{table}

\begin{figure}[htb]
	\centering
	\resizebox*{!}{3.5cm}{
		\input{Images/mao_avdi_r1.tex}		
		\input{Images/mao_avdi_r2.tex}		
		\input{Images/mao_avdi_g1.tex}		
		\input{Images/mao_avdi_g2.tex}		
	}
	\caption{\texttt{AvDi} dispersion of \texttt{MAO} \label{fig:mao_avdi_rg}}
\end{figure}

\begin{table}[htb]
\centering
\caption{Average \texttt{AvDi} and \texttt{MiDi} of \texttt{MRA} in random networks\label{tab:mra_AvDi_MiDi_r}}
{\scriptsize{
\begin{tabular}{@{}lcccccccccl@{}}\toprule
	\texttt{AvDi}&\multicolumn{9}{c}{$K$} \\\cmidrule(lr){2-9} 
	 $R_{n,m}$ 	& 3 & 4 & 5 & 6 & 7 & 8 & 9 & 10\\\cmidrule(lr){1-9}
$R_{100,500}$   & 0.852  & 0.895  & 0.885  & 0.862  & 0.826  & 0.803  & 0.794  & 0.741  \\
$R_{100,1000}$  & --     & 0.933  & 0.951  & 0.939  & 0.944  & 0.940  & 0.937  & 0.877\\\cmidrule(lr){1-9}
$R_{300,1500}$  & 0.915  & 0.942  & 0.933  & 0.925  & 0.913  & 0.904  & 0.901  & 0.884  \\
$R_{300,3000}$  & 0.979  & 0.924  & 0.971  & 0.886  & 0.891  & 0.892  & 0.897  & 0.896\\\cmidrule(lr){1-9}
$R_{500,2500}$  & 0.952  & 0.950  & 0.945  & 0.935  & 0.929  & 0.907  & 0.904  & 0.889  \\
$R_{500,5000}$  & 0.879  & 0.929  & 0.927  & 0.938  & 0.940  & 0.934  & 0.933  & 0.922\\\midrule
Average         & 0.915  & 0.929  & 0.935  & 0.914  & 0.907  & 0.897  & 0.894  & 0.868 & \fbox{0.909}\\\bottomrule
\end{tabular}
\begin{tabular}{@{}lcccccccccl@{}}\toprule
	\texttt{MiDi}&\multicolumn{9}{c}{$K$} \\\cmidrule(lr){2-9}
	$R_{n,m}$ & 3 & 4 & 5 & 6 & 7 & 8 & 9 & 10\\\cmidrule(lr){1-9}
$R_{100,500}$ & 0.772  & 0.735   & 0.720   & 0.666  & 0.603  & 0.532  & 0.513  & 0.416  \\
$R_{100,1000}$& --     & 0.726   & 0.746   & 0.753  & 0.743  & 0.727  & 0.723  & 0.652\\\cmidrule(lr){1-9}
$R_{300,1500}$& 0.817  & 0.824   & 0.815   & 0.766  & 0.750  & 0.729  & 0.695  & 0.642  \\
$R_{300,3000}$& 0.938  & 0.833   & 0.850   & 0.744  & 0.741  & 0.704  & 0.722  & 0.710\\\cmidrule(lr){1-9}
$R_{500,2500}$& 0.888  & 0.866   & 0.848   & 0.803  & 0.804  & 0.742  & 0.738  & 0.704  \\
$R_{500,5000}$& 0.798  & 0.831   & 0.811   & 0.806  & 0.795  & 0.791  & 0.766  & 0.734\\\midrule
Average       & 0.843  & 0.802   & 0.798   & 0.756  & 0.739  & 0.704  & 0.693  & 0.643  & \fbox{0.751}\\\bottomrule
\end{tabular}}}
\end{table}

\begin{figure}[htb]
	\centering
	\resizebox*{!}{3.5cm}{
		\input{Images/mra_avdi_r1.tex}		
		\input{Images/mra_avdi_r2.tex}		
		\input{Images/mra_avdi_g1.tex}		
		\input{Images/mra_avdi_g2.tex}		
	}
	\caption{\texttt{AvDi} dispersion of \texttt{MRA} \label{fig:mra_avdi_rg}}
\end{figure}

\begin{table}[htb]
\centering
\caption{Average \texttt{AvDi} and \texttt{MiDi} of \texttt{MRO} in random networks\label{tab:mro_AvDi_MiDi_r}}
{\scriptsize{
\begin{tabular}{@{}lcccccccccl@{}}\toprule
	\texttt{AvDi}&\multicolumn{9}{c}{$K$} \\\cmidrule(lr){2-9} 
    $R_{n,m}$ & 3 & 4 & 5 & 6 & 7 & 8 & 9 & 10\\\cmidrule(lr){1-9}
$R_{100,500}$ & 0.866  & 0.906  & 0.885  & 0.865  & 0.849  & 0.828  & 0.815  & 0.808  \\
$R_{100,1000}$& --     & 0.978  & 0.957  & 0.944  & 0.951  & 0.943  & 0.932  & 0.931\\\cmidrule(lr){1-9}
$R_{300,1500}$& 0.931  & 0.947  & 0.938  & 0.929  & 0.916  & 0.912  & 0.907  & 0.904  \\
$R_{300,3000}$& 0.963  & 0.974  & 0.971  & 0.968  & 0.956  & 0.951  & 0.941  & 0.931\\\cmidrule(lr){1-9}
$R_{500,2500}$& 0.942  & 0.965  & 0.952  & 0.942  & 0.932  & 0.929  & 0.929  & 0.923  \\
$R_{500,5000}$& 0.919  & 0.941  & 0.935  & 0.950  & 0.948  & 0.944  & 0.942  & 0.938\\\midrule
Average       & 0.924  & 0.952  & 0.940  & 0.933  & 0.925  & 0.918  & 0.911  & 0.906 & \fbox{0.926}\\\bottomrule
\end{tabular}
\begin{tabular}{@{}lcccccccccl@{}}\toprule
		\texttt{MiDi}&\multicolumn{9}{c}{$K$} \\\cmidrule(lr){2-9}
	$R_{n,m}$ & 3 & 4 & 5 & 6 & 7 & 8 & 9 & 10\\\cmidrule(lr){1-9}
$R_{100,500}$ & 0.763  & 0.758  & 0.719   & 0.674  & 0.632  & 0.577  & 0.533  & 0.525  \\
$R_{100,1000}$& --     & 0.869  & 0.757   & 0.749  & 0.755  & 0.711  & 0.703  & 0.710\\\cmidrule(lr){1-9}
$R_{300,1500}$& 0.835  & 0.836  & 0.792   & 0.789  & 0.746  & 0.740  & 0.707  & 0.687  \\
$R_{300,3000}$& 0.889  & 0.899  & 0.858   & 0.831  & 0.759  & 0.763  & 0.736  & 0.707\\\cmidrule(lr){1-9}
$R_{500,2500}$& 0.851  & 0.893  & 0.862   & 0.837  & 0.814  & 0.787  & 0.780  & 0.710  \\
$R_{500,5000}$& 0.841  & 0.849  & 0.832   & 0.833  & 0.805  & 0.809  & 0.777  & 0.793\\\midrule
Average       & 0.836  & 0.851  & 0.803   & 0.785  & 0.752  & 0.731  & 0.706  & 0.689  & \fbox{0.768}\\\bottomrule
\end{tabular}}}
\end{table}

\begin{figure}[htb]
	\centering
	\resizebox*{!}{3.5cm}{
		\input{Images/mro_avdi_r1.tex}		
		\input{Images/mro_avdi_r2.tex}		
		\input{Images/mro_avdi_g1.tex}		
		\input{Images/mro_avdi_g2.tex}		
	}
	\caption{\texttt{AvDi} dispersion of \texttt{MRO} \label{fig:mro_avdi_rg}}
\end{figure}

\begin{table}[htb]
\centering
\caption{Average \texttt{AvDi} and \texttt{MiDi} of \texttt{MAR} in random networks\label{tab:mar_AvDi_MiDi_r}}
{\scriptsize{
\begin{tabular}{@{}lcccccccccl@{}}\toprule
		\texttt{AvDi}&\multicolumn{9}{c}{$K$} \\\cmidrule(lr){2-9} 
 $R_{n,m}$ & 3 & 4 & 5 & 6 & 7 & 8 & 9 & 10\\\cmidrule(lr){1-9}
$R_{100,500}$  & 0.876  & 0.903  & 0.895  & 0.878  & 0.871  & 0.858  & 0.849  & 0.845\\
$R_{100,1000}$ & --     & 0.962  & 0.957  & 0.958  & 0.964  & 0.957  & 0.955  & 0.955\\\cmidrule(lr){1-9}
$R_{300,1500}$ & 0.950  & 0.949  & 0.950  & 0.940  & 0.936  & 0.930  & 0.920  & 0.913  \\
$R_{300,3000}$ & 0.977  & 0.969  & 0.970  & 0.973  & 0.968  & 0.965  & 0.965  & 0.960\\\cmidrule(lr){1-9}
$R_{500,2500}$ & 0.956  & 0.962  & 0.957  & 0.954  & 0.951  & 0.945  & 0.942  & 0.934  \\
$R_{500,5000}$ & 0.925  & 0.948  & 0.938  & 0.949  & 0.954  & 0.948  & 0.957  & 0.952\\\midrule
Average        & 0.937  & 0.949  & 0.945  & 0.942  & 0.941  & 0.934  & 0.931  & 0.926 & \fbox{0.938}\\\bottomrule
\end{tabular}
\begin{tabular}{@{}lcccccccccl@{}}\toprule
		\texttt{MiDi}&\multicolumn{9}{c}{$K$} \\\cmidrule(lr){2-9}
$R_{n,m}$ & 3 & 4 & 5 & 6 & 7 & 8 & 9 & 10\\\cmidrule(lr){1-9}
$R_{100,500}$  & 0.820  & 0.751  & 0.720  & 0.657  & 0.635  & 0.567  & 0.537  & 0.503  \\
$R_{100,1000}$ & --     & 0.773  & 0.739  & 0.725  & 0.727  & 0.717  & 0.732  & 0.685\\\cmidrule(lr){1-9}
$R_{300,1500}$ & 0.895  & 0.827  & 0.818  & 0.780  & 0.762  & 0.722  & 0.713  & 0.678  \\
$R_{300,3000}$ & 0.930  & 0.868  & 0.833  & 0.842  & 0.808  & 0.748  & 0.730  & 0.712\\\cmidrule(lr){1-9}
$R_{500,2500}$ & 0.891  & 0.869  & 0.841  & 0.836  & 0.806  & 0.774  & 0.779  & 0.716  \\
$R_{500,5000}$ & 0.875  & 0.866  & 0.820  & 0.813  & 0.806  & 0.777  & 0.809  & 0.754\\\midrule
Average        & 0.882  & 0.825  & 0.795  & 0.775  & 0.757  & 0.717  & 0.717  & 0.675  & \fbox{0.766}\\\bottomrule
\end{tabular}}}
\end{table}

\begin{figure}[htb]
	\centering
	\resizebox*{!}{3.5cm}{
		\input{Images/mar_avdi_r1.tex}		
		\input{Images/mar_avdi_r2.tex}		
		\input{Images/mar_avdi_g1.tex}		
		\input{Images/mar_avdi_g2.tex}		
	}
	\caption{\texttt{AvDi} dispersion of \texttt{MAR} \label{fig:mar_avdi_rg}}
\end{figure}

\begin{table}[htb]
\centering
\caption{Average \texttt{AvDi} and \texttt{MiDi} of \texttt{MAO} in grid networks\label{tab:mao_AvDi_MiDi_g_all}}
{\scriptsize{
\begin{tabular}{@{}lccccccccl@{}}\toprule
\texttt{AvDi} &\multicolumn{8}{c}{$K$}\\\cmidrule(lr){2-9}
	$G_{p,q}$	& 3 & 4 & 5 & 6 & 7 & 8 & 9 & 10\\\cmidrule(lr){1-9}
   	$G_{3,12}$  & 0.949  & 0.833  & 0.800 & 0.787  & 0.762  & 0.745  & 0.741  & 0.730  \\
   	$G_{4,36}$  & 0.982  & 0.982  & 0.892 & 0.859  & 0.848  & 0.845  & 0.819  & 0.811  \\
   	$G_{6,6}$   & 0.933  & 0.933  & 0.900 & 0.893  & 0.867  & 0.857  & 0.844  & 0.840  \\
   	$G_{12,12}$ & 0.970  & 0.970  & 0.955 & 0.952  & 0.939  & 0.935  & 0.929  & 0.927\\\midrule
   	Average     & 0.959  & 0.930  & 0.887 & 0.874  & 0.854  & 0.854  & 0.833  & 0.827 & \fbox{0.875}\\\bottomrule
\end{tabular}
\begin{tabular}{@{}lccccccccl@{}}\toprule
\texttt{MiDi} &\multicolumn{8}{c}{$K$}\\\cmidrule(lr){2-9}
	$G_{p,q}$  & 3 & 4 & 5 & 6 & 7 & 8 & 9 & 10\\\cmidrule(lr){1-9}
	$G_{3,12}$  & 0.846  & 0.538  & 0.231 & 0.154  & 0.154  & 0.154 & 0.000  & 0.000  \\
	$G_{4,36}$  & 0.974  & 0.947  & 0.526 & 0.158  & 0.105  & 0.105 & 0.105  & 0.053  \\
	$G_{6,6}$   & 0.900  & 0.900  & 0.700 & 0.700  & 0.600  & 0.500 & 0.500  & 0.500  \\
	$G_{12,12}$ & 0.955  & 0.909  & 0.909 & 0.864  & 0.773  & 0.818 & 0.818  & 0.727  \\\midrule
	Average     & 0.919  & 0.824  & 0.592 & 0.469  & 0.408  & 0.394 & 0.356  & 0.320 & \fbox{0.535}\\\bottomrule
\end{tabular}}}
\end{table}

\begin{table}[htb]
\centering
\caption{Average \texttt{AvDi} and \texttt{MiDi} of \texttt{MRA} in grid networks\label{tab:mra_AvDi_MiDi_g}}
{\scriptsize{
\begin{tabular}{@{}lcccccccccl@{}}\toprule
\texttt{AvDi} &\multicolumn{9}{c}{$K$}\\\cmidrule(lr){2-9}
	$G_{p,q}$ & 3 & 4 & 5 & 6 & 7 & 8 & 9 & 10\\\cmidrule(lr){1-9}
   	$G_{3,12}$  & 0.949  & 0.808 & 0.538  & 0.308  & 0.264  & 0.231  & 0.205  & 0.185  \\
   	$G_{4,36}$  & 0.982  & 0.982 & 0.718  & 0.584  & 0.560  & 0.570  & 0.414  & 0.506  \\
   	$G_{6,6}$   & 0.933  & 0.867 & 0.800  & 0.893  & 0.867  & 0.521  & 0.519  & 0.511  \\
   	$G_{12,12}$ & 0.970  & 0.970 & 0.955  & 0.952  & 0.939  & 0.929  & 0.924  & 0.758\\\midrule
   	Average     & 0.959  & 0.907 & 0.753  & 0.684  & 0.657  & 0.563  & 0.516  & 0.490 & \fbox{0.691}\\\bottomrule
\end{tabular}
\begin{tabular}{@{}lcccccccccl@{}}\toprule
\texttt{MiDi}&\multicolumn{9}{c}{$K$} \\\cmidrule(lr){2-9}
	$G_{p,q}$ & 3 & 4 & 5 & 6 & 7 & 8 & 9 & 10\\\cmidrule(lr){1-9}
	$G_{3,12}$  & 0.923  & 0.462 & 0.308 & 0.000  & 0.000  & 0.000  & 0.000  & 0.000  \\
	$G_{4,36}$  & 0.974  & 0.947 & 0.132 & 0.158  & 0.132  & 0.000  & 0.000  & 0.000  \\
	$G_{6,6}$   & 0.900  & 0.700 & 0.400 & 0.600  & 0.000  & 0.000  & 0.000  & 0.000  \\
	$G_{12,12}$ & 0.955  & 0.955 & 0.909 & 0.818  & 0.773  & 0.727  & 0.773  & 0.318  \\\midrule
	Average     & 0.938  & 0.766 & 0.437 & 0.419  & 0.376  & 0.182  & 0.193  & 0.080 & \fbox{0.424}\\\bottomrule
\end{tabular}}}
\end{table}

\begin{table}[htb]
\centering
\caption{Average \texttt{AvDi} and \texttt{MiDi} of \texttt{MRO} in grid networks\label{tab:mro_AvDi_MiDi_g}}
{\scriptsize{
\begin{tabular}{@{}lcccccccccl@{}}\toprule
\texttt{AvDi} &\multicolumn{9}{c}{$K$}\\\cmidrule(lr){2-9}
	$G_{p,q}$ & 3 & 4 & 5 & 6 & 7 & 8 & 9 & 10\\\cmidrule(lr){1-9}
   	$G_{3,12}$  & 0.949  & 0.808  & 0.708  & 0.641  & 0.597  & 0.569  & 0.551  & 0.542  \\
   	$G_{4,36}$  & 0.982  & 0.982  & 0.892  & 0.804  & 0.729  & 0.650  & 0.621  & 0.581   \\
   	$G_{6,6}$   & 0.933  & 0.933  & 0.900  & 0.893  & 0.867  & 0.857  & 0.833  & 0.822  \\
   	$G_{12,12}$ & 0.970  & 0.970  & 0.955  & 0.952  & 0.939  & 0.935  & 0.924  & 0.919\\\midrule
   	Average     & 0.959  & 0.927  & 0.864  & 0.822  & 0.783  & 0.753  & 0.732  & 0.716 & \fbox{0.819}\\\bottomrule
\end{tabular}
\begin{tabular}{@{}lcccccccccl@{}}\toprule
\texttt{MiDi}&\multicolumn{9}{c}{$K$} \\\cmidrule(lr){2-9}
	$G_{p,q}$ & 3 & 4 & 5 & 6 & 7 & 8 & 9 & 10\\\cmidrule(lr){1-9}
	$G_{3,12}$  & 0.923  & 0.538  & 0.462 & 0.308  & 0.308  & 0.308 & 0.308  & 0.231  \\
	$G_{4,36}$  & 0.947  & 0.974  & 0.132 & 0.132  & 0.132  & 0.158 & 0.132  & 0.132  \\
	$G_{6,6}$   & 0.900  & 0.800  & 0.700 & 0.700  & 0.600  & 0.400 & 0.200  & 0.200  \\
	$G_{12,12}$ & 0.909  & 0.909  & 0.864 & 0.864  & 0.818  & 0.773 & 0.727  & 0.773  \\\midrule
	Average     & 0.920  & 0.805  & 0.539 & 0.501  & 0.464  & 0.410 & 0.342  & 0.334 & \fbox{0.539}\\\bottomrule
\end{tabular}}}
\end{table}

\begin{table}[htb]
\centering
\caption{Average \texttt{AvDi} and \texttt{MiDi} of \texttt{MAR} in grid networks\label{tab:mar_AvDi_MiDi_g}}
{\scriptsize{
\begin{tabular}{@{}lcccccccccl@{}}\toprule
\texttt{AvDi} &\multicolumn{9}{c}{$K$}\\\cmidrule(lr){2-9}
	$G_{p,q}$	& 3 & 4 & 5 & 6 & 7 & 8 & 9 & 10\\\cmidrule(lr){1-9}
   	$G_{3,12}$ & 0.949 & 0.821 & 0.769 & 0.708 & 0.670 & 0.654 & 0.607 & 0.610  \\
   	$G_{4,36}$ & 0.982 & 0.982 & 0.892 & 0.814 & 0.759 & 0.690 & 0.643 & 0.652   \\
   	$G_{6,6}$  & 0.933 & 0.933 & 0.900 & 0.893 & 0.867 & 0.857 & 0.833 & 0.822  \\
   	$G_{12,12}$& 0.970 & 0.970 & 0.955 & 0.952 & 0.939 & 0.935 & 0.924 & 0.919\\\midrule
   	Average    & 0.959 & 0.926 & 0.879 & 0.842 & 0.809 & 0.784 & 0.752 & 0.751 & \fbox{0.839}\\\bottomrule
\end{tabular}
\begin{tabular}{@{}lcccccccccl@{}}\toprule
\texttt{MiDi}&\multicolumn{9}{c}{$K$} \\\cmidrule(lr){2-9}
	$G_{p,q}$ & 3 & 4 & 5 & 6 & 7 & 8 & 9 & 10\\\cmidrule(lr){1-9}
	$G_{3,12}$ & 0.923  & 0.538 & 0.231 & 0.385 & 0.231 & 0.308 & 0.308  & 0.231  \\
	$G_{4,36}$ & 0.947  & 0.974 & 0.237 & 0.132 & 0.158 & 0.132 & 0.158  & 0.132  \\
	$G_{6,6}$  & 0.800  & 0.800 & 0.600 & 0.600 & 0.600 & 0.500 & 0.200  & 0.200  \\
	$G_{12,12}$& 0.955  & 0.909 & 0.864 & 0.864 & 0.864 & 0.818 & 0.727  & 0.727  \\\midrule
	Average    & 0.906  & 0.805 & 0.533 & 0.495 & 0.463 & 0.439 & 0.348  & 0.322 & \fbox{0.539}\\\bottomrule
\end{tabular}}}
\end{table}

\cleardoublepage
\section{Dissimilarities and run times for the constrained formulations\label{app:2}}
\begin{table}[htb]
\centering
\caption{Average \texttt{AvDi} and \texttt{MiDi} of \texttt{MRAA} in random networks\label{tab:mraa_AvDi_MiDi_r}}
{\scriptsize{
\begin{tabular}{@{}lcccccccccl@{}}\toprule
	\texttt{AvDi}&\multicolumn{9}{c}{$K$} \\\cmidrule(lr){2-9} 
	 $R_{n,m}$ 	& 3 & 4 & 5 & 6 & 7 & 8 & 9 & 10\\\cmidrule(lr){1-9}
$R_{100,500}$   & 0.864  & 0.910  & 0.907  & 0.893  & 0.873  & 0.859  & 0.851  & 0.828  \\
$R_{100,1000}$  & --     & 0.951  & 0.968  & 0.967  & 0.968  & 0.968  & 0.968  & 0.966\\\cmidrule(lr){1-9}
$R_{300,1500}$  & 0.923  & 0.943  & 0.952  & 0.947  & 0.941  & 0.943  & 0.934  & 0.932  \\
$R_{300,3000}$  & 0.979  & 0.973  & 0.977  & 0.972  & 0.968  & 0.973  & 0.974  & 0.971\\\cmidrule(lr){1-9}
$R_{500,2500}$  & 0.952  & 0.961  & 0.955  & 0.959  & 0.949  & 0.948  & 0.948  & 0.942  \\
$R_{500,5000}$  & 0.896  & 0.944  & 0.939  & 0.952  & 0.954  & 0.954  & 0.960  & 0.958\\\midrule
Average         & 0.923  & 0.947  & 0.949  & 0.948  & 0.942  & 0.941  & 0.939  & 0.933 & \fbox{0.941}\\\bottomrule
\end{tabular}
\begin{tabular}{@{}lcccccccccl@{}}\toprule
		\texttt{MiDi}&\multicolumn{9}{c}{$K$} \\\cmidrule(lr){2-10}
	$R_{n,m}$	& 3 & 4 & 5 & 6 & 7 & 8 & 9 & 10\\\cmidrule(lr){1-9}
$R_{100,500}$   & 0.774  & 0.742  & 0.723   & 0.669  & 0.625  & 0.582  & 0.515  & 0.498  \\
$R_{100,1000}$  & --     & 0.704  & 0.754   & 0.743  & 0.720  & 0.680  & 0.675  & 0.670\\\cmidrule(lr){1-9}
$R_{300,1500}$  & 0.811  & 0.790  & 0.799   & 0.766  & 0.747  & 0.730  & 0.702  & 0.701  \\
$R_{300,3000}$  & 0.938  & 0.855  & 0.837   & 0.738  & 0.719  & 0.709  & 0.728  & 0.707\\\cmidrule(lr){1-9}
$R_{500,2500}$  & 0.888  & 0.860  & 0.819   & 0.790  & 0.769  & 0.743  & 0.739  & 0.680  \\
$R_{500,5000}$  & 0.798  & 0.841  & 0.810   & 0.802  & 0.803  & 0.766  & 0.771  & 0.761\\\midrule
Average         & 0.842  & 0.799  & 0.790   & 0.752  & 0.731  & 0.702  & 0.688  & 0.670  & \fbox{0.745}\\\bottomrule
\end{tabular}}}
\end{table}

\begin{table}[htb]
\centering
\caption{Average \texttt{AvDi} and \texttt{MiDi} of \texttt{MARA} in random networks\label{tab:mara_AvDi_MiDi_r}}
{\scriptsize{
\begin{tabular}{@{}lcccccccccl@{}}\toprule
	\texttt{AvDi}&\multicolumn{9}{c}{$K$} \\\cmidrule(lr){2-9} 
	 $R_{n,m}$ 	& 3 & 4 & 5 & 6 & 7 & 8 & 9 & 10\\\cmidrule(lr){1-9}
$R_{100,500}$   & 0.867  & 0.911  & 0.904  & 0.900  & 0.885  & 0.873  & 0.864  & 0.862\\
$R_{100,1000}$  & --     & 0.970  & 0.962  & 0.962  & 0.969  & 0.969  & 0.969  & 0.965\\\cmidrule(lr){1-9}
$R_{300,1500}$  & 0.929  & 0.951  & 0.949  & 0.951  & 0.944  & 0.941  & 0.935  & 0.933\\
$R_{300,3000}$  & 0.966  & 0.977  & 0.976  & 0.979  & 0.976  & 0.976  & 0.972  & 0.973\\\cmidrule(lr){1-9}
$R_{500,2500}$  & 0.960  & 0.967  & 0.961  & 0.963  & 0.955  & 0.955  & 0.950  & 0.947\\
$R_{500,5000}$  & 0.922  & 0.948  & 0.948  & 0.960  & 0.957  & 0.958  & 0.966  & 0.961\\\midrule
Average         & 0.929  & 0.954  & 0.950  & 0.952  & 0.948  & 0.945  & 0.943  & 0.940 & \fbox{0.945}\\\bottomrule
\end{tabular}
\begin{tabular}{@{}lcccccccccl@{}}\toprule
	\texttt{MiDi}&\multicolumn{9}{c}{$K$} \\\cmidrule(lr){2-9}
	$R_{n,m}$ & 3 & 4 & 5 & 6 & 7 & 8 & 9 & 10\\\cmidrule(lr){1-9}
$R_{100,500}$   & 0.794  & 0.752 & 0.718 & 0.671  & 0.636  & 0.582  & 0.540  & 0.497  \\
$R_{100,1000}$  & --     & 0.819 & 0.731 & 0.719  & 0.740  & 0.706  & 0.710  & 0.671\\\cmidrule(lr){1-9}
$R_{300,1500}$  & 0.832  & 0.819 & 0.793 & 0.770  & 0.763  & 0.721  & 0.714  & 0.697  \\
$R_{300,3000}$  & 0.899  & 0.883 & 0.833 & 0.822  & 0.785  & 0.752  & 0.707  & 0.705\\\cmidrule(lr){1-9}
$R_{500,2500}$  & 0.905  & 0.867 & 0.845 & 0.831  & 0.793  & 0.759  & 0.756  & 0.723  \\
$R_{500,5000}$  & 0.867  & 0.848 & 0.803 & 0.836  & 0.802  & 0.764  & 0.796  & 0.755\\\midrule
Average         & 0.859  & 0.831 & 0.787 & 0.775  & 0.753  & 0.714  & 0.714  & 0.675  & \fbox{0.760}\\\bottomrule
\end{tabular}}}
\end{table}

\begin{table}[htb]
\centering
\caption{Run times of \texttt{MRAA} (seconds)\label{tab:MRAAcpu0}}
{\scriptsize{
\begin{tabular}{@{}lccccccccl@{}}\toprule
       &\multicolumn{8}{c}{$K$} \\\cmidrule(lr){2-9}
     $R_{n,m}$ & 3 & 4 & 5 & 6 & 7 & 8 & 9 & 10\\\cmidrule(lr){1-9}
$R_{100,500}$  & 0.068 & 0.095 & 0.135 & 0.216 & 0.301 & 0.564 & 0.815 & 1.048\\
$R_{100,1000}$ & --    & 0.177 & 0.218 & 0.261 & 0.353 & 0.499 & 0.594 & 0.601\\\cmidrule(lr){1-9}
$R_{300,1500}$ & 0.204 & 0.285 & 0.372 & 0.451 & 0.637 & 0.741 & 1.028 & 1.702\\
$R_{300,3000}$ & 0.414 & 0.561 & 0.682 & 0.805 & 1.039 & 1.241 & 1.455 & 6.490\\\cmidrule(lr){1-9}
$R_{500,2500}$ & 0.382 & 0.480 & 0.661 & 0.787 & 1.131 & 1.468 & 1.625 & 2.109\\
$R_{500,5000}$ & 0.567 & 0.755 & 1.171 & 1.432 & 1.924 & 2.283 & 5.247 & 19.440\\\midrule
Average        & 0.327 & 0.329 & 0.540 & 0.659 & 0.898 & 1.133 & 1.794 & 5.320  & \fbox{1.394}\\\bottomrule
\end{tabular}
\begin{tabular}{@{}lccccccccl@{}}\toprule
       &\multicolumn{8}{c}{$K$} \\\cmidrule(lr){2-9}
  $G_{p,q}$ & 3 & 4 & 5 & 6 & 7 & 8 & 9 & 10\\\cmidrule(lr){1-9}
 $G_{3,12}$ & 0.047 & 0.024 & 0.137 & 0.253  & 0.272  & 0.514 & 0.842 & 0.905\\
 $G_{4,36}$ & 0.078 & 0.117 & 0.197 & 19.864 & 42.651 & 3.999 & 4.611 & 300\\
  $G_{6,6}$ & 0.022 & 0.012 & 0.103 & 0.097  & 0.370  & 0.493 & 2.487 & 1.093\\
$G_{12,12}$ & 0.073 & 0.082 & 0.456 & 0.227  & 1.484  & 1.802 & 5.142 & 5.582\\\midrule
Average     & 0.055 & 0.059 & 0.667 & 5.111  & 11.194 & 1.702 & 3.270 & 76.915 & \fbox{12.371}\\\bottomrule
\end{tabular}}}
\end{table}

\begin{table}[htb]
\centering
\caption{Run times of \texttt{MARA} (seconds)\label{tab:MARAcpu0}}
{\scriptsize{
\begin{tabular}{@{}lccccccccl@{}}\toprule
       &\multicolumn{8}{c}{$K$} \\\cmidrule(lr){2-9}
     $R_{n,m}$ & 3 & 4 & 5 & 6 & 7 & 8 & 9 & 10\\\cmidrule(lr){1-9}
$R_{100,500}$  & 0.073 & 0.112 & 0.156 & 0.206 & 0.313 & 0.422 & 0.508 & 0.639\\
$R_{100,1000}$ & --    & 0.178 & 0.245 & 0.267 & 0.341 & 0.395 & 0.466 & 0.588\\\cmidrule(lr){1-9}
$R_{300,1500}$ & 0.220 & 0.309 & 0.370 & 0.466 & 0.599 & 0.710 & 0.806 & 1.021\\
$R_{300,3000}$ & 0.398 & 0.517 & 0.671 & 0.787 & 1.075 & 1.496 & 1.567 & 1.814\\\cmidrule(lr){1-9}
$R_{500,2500}$ & 0.398 & 0.517 & 0.671 & 0.787 & 1.075 & 1.496 & 1.567 & 1.814\\
$R_{500,5000}$ & 0.588 & 0.891 & 1.276 & 1.540 & 2.055 & 2.447 & 5.342 & 19.563\\\midrule
Average        & 0.333 & 0.429 & 0.566 & 0.681 & 0.906 & 1.119 & 1.693 & 5.014  & \fbox{1.364}\\\bottomrule
\end{tabular}
\begin{tabular}{@{}lccccccccl@{}}\toprule
       &\multicolumn{8}{c}{$K$} \\\cmidrule(lr){2-9}
  $G_{p,q}$ & 3 & 4 & 5 & 6 & 7 & 8 & 9 & 10\\\cmidrule(lr){1-9}
 $G_{3,12}$ & 0.016 & 0.022 & 0.022 & 0.027 & 0.029 & 0.036 & 0.039 & 0.092\\
 $G_{4,36}$ & 0.074 & 0.112 & 0.153 & 0.188 & 0.231 & 0.276 & 0.357 & 0.377\\
  $G_{6,6}$ & 0.017 & 0.016 & 0.024 & 0.025 & 0.116 & 0.030 & 0.083 & 0.036\\
$G_{12,12}$ & 0.063 & 0.065 & 0.204 & 0.130 & 0.558 & 0.277 & 0.561 & 1.111\\\midrule
Average     & 0.043 & 0.054 & 0.101 & 0.093 & 0.233 & 0.155 & 0.260 & 0.404 & \fbox{1.364}\\\bottomrule
\end{tabular}}}
\end{table}

\begin{table}[htb]
\centering
\caption{Average \texttt{AvDi} and \texttt{MiDi} of \texttt{MRAA} in grid networks\label{tab:mraa_AvDi_MiDi_g}}
{\scriptsize{
\begin{tabular}{@{}lcccccccccl@{}}\toprule
\texttt{AvDi} &\multicolumn{9}{c}{$K$}\\\cmidrule(lr){2-9}
	$G_{p,q}$ & 3 & 4 & 5 & 6 & 7 & 8 & 9 & 10\\\cmidrule(lr){1-9}
   	$G_{3,12}$  & 0.949  & 0.833 & 0.738 & 0.703  & 0.714  & 0.681 & 0.656  & 0.641  \\
   	$G_{4,36}$  & 0.982  & 0.982 & 0.892 & 0.804  & 0.732  & 0.759 & 0.702  & 0.718  \\
   	$G_{6,6}$   & 0.933  & 0.933 & 0.900 & 0.893  & 0.867  & 0.857 & 0.833  & 0.822  \\
   	$G_{12,12}$ & 0.970  & 0.970 & 0.955 & 0.952  & 0.939  & 0.935 & 0.924  & 0.919\\\midrule
   	Average     & 0.959  & 0.930 & 0.871 & 0.838  & 0.813  & 0.808 & 0.779  & 0.775 & \fbox{0.847}\\\bottomrule
\end{tabular}
\begin{tabular}{@{}lcccccccccl@{}}\toprule
\texttt{MiDi}&\multicolumn{9}{c}{$K$} \\\cmidrule(lr){2-9}
	$G_{p,q}$ & 3 & 4 & 5 & 6 & 7 & 8 & 9 & 10\\\cmidrule(lr){1-9}
	$G_{3,12}$  & 0.923  & 0.538 & 0.231  & 0.231  & 0.154  & 0.154  & 0.000  & 0.000  \\
	$G_{4,36}$  & 0.974  & 0.947 & 0.237  & 0.053  & 0.105  & 0.105  & 0.079  & 0.105  \\
	$G_{6,6}$   & 0.900  & 0.800 & 0.800  & 0.700  & 0.500  & 0.500  & 0.300  & 0.200  \\
	$G_{12,12}$ & 0.955  & 0.909 & 0.818  & 0.818  & 0.727  & 0.727  & 0.727  & 0.682  \\\midrule
	Average     & 0.938  & 0.805 & 0.521  & 0.450  & 0.372  & 0.372  & 0.277  & 0.247 & \fbox{0.498}\\\bottomrule
\end{tabular}}}
\end{table}

\begin{table}[htb]
\centering
\caption{Average \texttt{AvDi} and \texttt{MiDi} of \texttt{MARA} in grid networks\label{tab:mara_AvDi_MiDi_g}}
{\scriptsize{
\begin{tabular}{@{}lcccccccccl@{}}\toprule
\texttt{AvDi} &\multicolumn{9}{c}{$K$}\\\cmidrule(lr){2-9}
	$G_{p,q}$	& 3 & 4 & 5 & 6 & 7 & 8 & 9 & 10\\\cmidrule(lr){1-9}
   	$G_{3,12}$  & 0.949  & 0.833  & 0.777 & 0.785 & 0.711 & 0.720 & 0.686 & 0.699  \\
   	$G_{4,36}$  & 0.982  & 0.982  & 0.982 & 0.812 & 0.747 & 0.787 & 0.767 & 0.748  \\
   	$G_{6,6}$   & 0.933  & 0.933  & 0.900 & 0.893 & 0.867 & 0.857 & 0.833 & 0.822  \\
   	$G_{12,12}$ & 0.970  & 0.970  & 0.955 & 0.952 & 0.939 & 0.935 & 0.924 & 0.919\\\midrule
   	Average     & 0.959  & 0.930  & 0.881 & 0.860 & 0.816 & 0.825 & 0.803 & 0.797 & \fbox{0.859}\\\bottomrule
\end{tabular}
\begin{tabular}{@{}lcccccccccl@{}}\toprule
\texttt{MiDi}&\multicolumn{9}{c}{$K$} \\\cmidrule(lr){2-9}
	$G_{p,q}$	& 3 & 4 & 5 & 6 & 7 & 8 & 9 & 10\\\cmidrule(lr){1-9}
	$G_{3,12}$  & 0.846 & 0.308 & 0.308 & 0.154  & 0.231  & 0.154  & 0.154 & 0.154  \\
	$G_{4,36}$  & 0.974 & 0.947 & 0.211 & 0.184  & 0.079  & 0.184  & 0.105 & 0.132  \\
	$G_{6,6}$   & 0.800 & 0.800 & 0.800 & 0.700  & 0.700  & 0.500  & 0.200 & 0.200  \\
	$G_{12,12}$ & 0.909 & 0.955 & 0.909 & 0.818  & 0.727  & 0.818  & 0.727 & 0.773  \\\midrule
	Average     & 0.876 & 0.752 & 0.557 & 0.464  & 0.434  & 0.414  & 0.297 & 0.315 & \fbox{0.514}\\\bottomrule
\end{tabular}}}
\end{table}

\begin{figure}[htb]
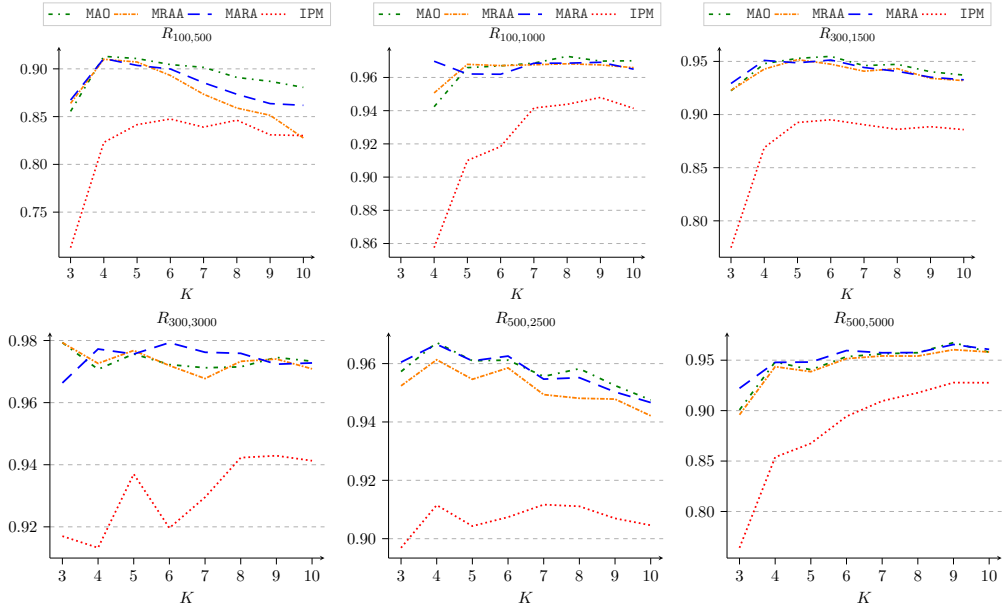

\centering
\resizebox*{!}{4cm}{
	\input{Comp/100-5-AvDi-Rn-TS2.tex}		
	\input{Comp/100-10-AvDi-Rn-TS2.tex}		
	\input{Comp/300-5-AvDi-Rn-TS2.tex}
}

\resizebox*{!}{4cm}{
	\input{Comp/300-10-AvDi-Rn-TS2.tex}
	\input{Comp/500-5-AvDi-Rn-TS2.tex}
	\input{Comp/500-10-AvDi-Rn-TS2.tex}
}
\caption{Average dissimilarity in random networks\label{fig:ipm_avdi_rn}}
\end{figure}

\begin{figure}[htb]
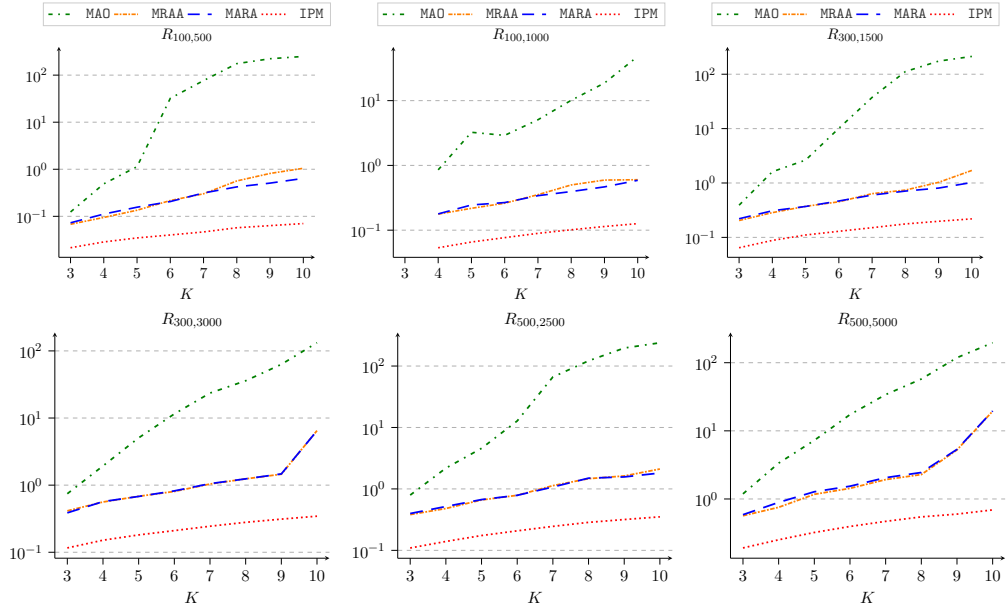

\centering
\resizebox*{!}{4cm}{
	\input{Comp/100-5-CPUS-Rn-TS2.tex}		
	\input{Comp/100-10-CPUS-Rn-TS2.tex}		
	\input{Comp/300-5-CPUS-Rn-TS2.tex}
}

\resizebox*{!}{4cm}{
	\input{Comp/300-10-CPUS-Rn-TS2.tex}
	\input{Comp/500-5-CPUS-Rn-TS2.tex}
	\input{Comp/500-10-CPUS-Rn-TS2.tex}
}
\caption{Run times in random networks (seconds -- log scale)\label{fig:ipm_cpus_rn}}
\end{figure}	

\begin{figure}[htb]
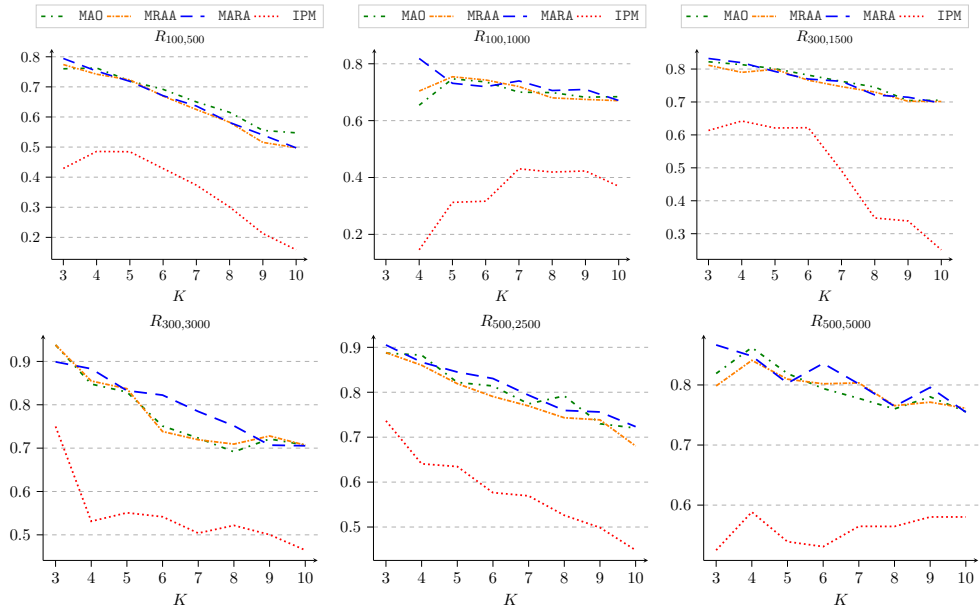

\centering
\resizebox*{!}{4cm}{
	\input{Comp/100-5-MiDi-Rn-TS2.tex}		
	\input{Comp/100-10-MiDi-Rn-TS2.tex}		
	\input{Comp/300-5-MiDi-Rn-TS2.tex}
}

\resizebox*{!}{4cm}{
	\input{Comp/300-10-MiDi-Rn-TS2.tex}
	\input{Comp/500-5-MiDi-Rn-TS2.tex}
	\input{Comp/500-10-MiDi-Rn-TS2.tex}
}
\caption{Minimum dissimilarity in random networks\label{fig:ipm_midi_rn}}
\end{figure}

\begin{figure}[htb]
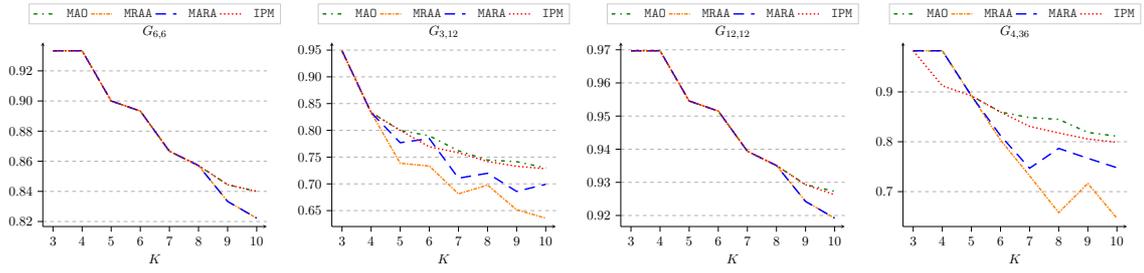

\centering
\resizebox*{!}{3.5cm}{
	\input{Comp/6-6-AvDi-Gr-TS2.tex}		
	\input{Comp/3-12-AvDi-Gr-TS2.tex}		
	\input{Comp/12-12-AvDi-Gr-TS2.tex}		
	\input{Comp/4-36-AvDi-Gr-TS2.tex}		
}
\caption{Average dissimilarity in grid networks\label{fig:ipm_avdi_gr}}
\end{figure}

\begin{figure}[htb]
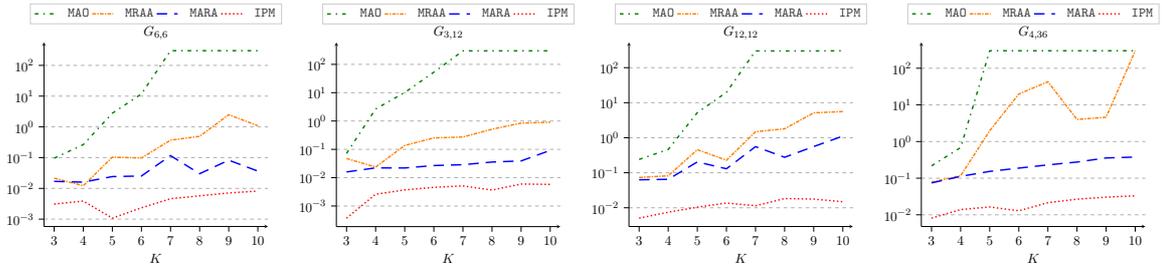

\centering
\resizebox*{!}{3.5cm}{
	\input{Comp/6-6-CPUS-Gr-TS2.tex}		
	\input{Comp/3-12-CPUS-Gr-TS2.tex}		
	\input{Comp/12-12-CPUS-Gr-TS2.tex}		
	\input{Comp/4-36-CPUS-Gr-TS2.tex}		
}
\caption{Run times in grid networks (seconds -- log scale)\label{fig:ipm_cpus_gr}}
\end{figure}

\begin{figure}[htb]
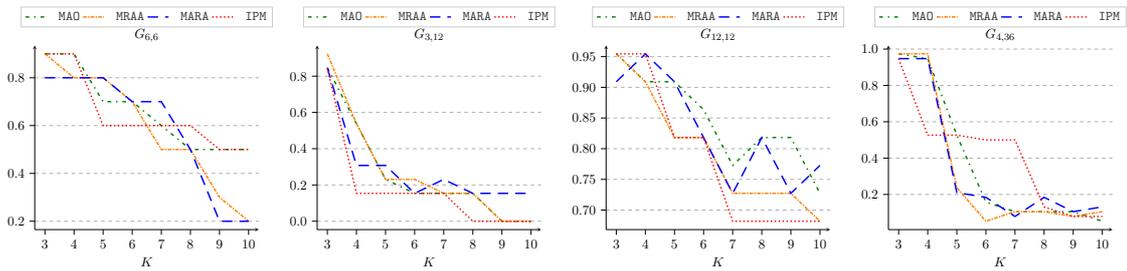

\centering
\resizebox*{!}{3.5cm}{
	\input{Comp/6-6-MiDi-Gr-TS2.tex}		
	\input{Comp/3-12-MiDi-Gr-TS2.tex}		
	\input{Comp/12-12-MiDi-Gr-TS2.tex}		
	\input{Comp/4-36-MiDi-Gr-TS2.tex}		
}
\caption{Minimum dissimilarity in grid networks\label{fig:ipm_midi_gr}}
\end{figure}

\cleardoublepage
\section{Dissimilarities and run times for the \texttt{IPM}\label{app:3}}

\begin{table}[htb]
\centering
\caption{Average \texttt{AvDi} and \texttt{MiDi} of \texttt{IPM} in random networks\label{tab:ipm_AvDi_MiDi_r}}
{\scriptsize{
\begin{tabular}{@{}lccccccccl@{}}\toprule
	\texttt{AvDi}&\multicolumn{9}{c}{$K$} \\\cmidrule(lr){2-9} 
	$R_{n,m}$ 	  & 3 & 4 & 5 & 6 & 7 & 8 & 9 & 10\\\cmidrule(lr){1-9}
	$R_{100,500}$ & 0.712 & 0.823 & 0.841 & 0.847 & 0.839 & 0.846 & 0.830 & 0.830\\
	$R_{100,1000}$ & --   & 0.857 & 0.910 & 0.918 & 0.941 & 0.943 & 0.947 & 0.941\\\cmidrule(lr){1-9}
	$R_{300,1500}$ & 0.774 & 0.868 & 0.892 & 0.895 & 0.890 & 0.886 & 0.888 & 0.885\\
	$R_{300,3000}$ & 0.917 & 0.913 & 0.937 & 0.919 & 0.929 & 0.942 & 0.942 & 0.941\\\cmidrule(lr){1-9}
	$R_{500,2500}$ & 0.896 & 0.911 & 0.904 & 0.907 & 0.911 & 0.911 & 0.906 & 0.904\\
	$R_{500,5000}$ & 0.764 & 0.853 & 0.867 & 0.894 & 0.909 & 0.917 & 0.927 & 0.927\\\midrule
	Average        & 0.812 & 0.870 & 0.891 & 0.896 & 0.903 & 0.907 & 0.906 & 0.904 & \fbox{0.886}\\\bottomrule
\end{tabular}
\begin{tabular}{@{}lccccccccl@{}}\toprule
	\texttt{MiDi}&\multicolumn{9}{c}{$K$} \\\cmidrule(lr){2-9}
	$R_{n,m}$	  & 3 & 4 & 5 & 6 & 7 & 8 & 9 & 10\\\cmidrule(lr){1-9}
	$R_{100,500}$ & 0.429 & 0.485 & 0.484 & 0.428 & 0.373 & 0.301 & 0.212 & 0.158\\
	$R_{100,1000}$ & --   & 0.145 & 0.312 & 0.316 & 0.430 & 0.419 & 0.423 & 0.369\\\cmidrule(lr){1-9}
	$R_{300,1500}$ & 0.613 & 0.642 & 0.621 & 0.621 & 0.491 & 0.347 & 0.338 & 0.251\\
	$R_{300,3000}$ & 0.750 & 0.531 & 0.550 & 0.541 & 0.503 & 0.521 & 0.500 & 0.464\\\cmidrule(lr){1-9}
	$R_{500,2500}$ & 0.736 & 0.640 & 0.634 & 0.576 & 0.569 & 0.526 & 0.498 & 0.448\\
	$R_{500,5000}$ & 0.525 & 0.588 & 0.539 & 0.530 & 0.564 & 0.564 & 0.580 & 0.580\\\midrule
	Average        & 0.610 & 0.505 & 0.523 & 0.502 & 0.488 & 0.446 & 0.425 & 0.378 & \fbox{0.484}\\\bottomrule
\end{tabular}}}
\end{table}

\begin{table}[htb]
\centering
\caption{Run times of \texttt{IPM} (seconds)\label{tab:IPMcpu0}}
{\scriptsize{
\begin{tabular}{@{}lccccccccl@{}}\toprule
	&\multicolumn{8}{c}{$K$} \\\cmidrule(lr){2-9}
	$R_{n,m}$ & 3 & 4 & 5 & 6 & 7 & 8 & 9 & 10\\\cmidrule(lr){1-9}
	$R_{100,500}$  & 0.021 & 0.028 & 0.034 & 0.040 & 0.046 & 0.057 & 0.063 & 0.070\\
	$R_{100,1000}$ & --    & 0.053 & 0.065 & 0.076 & 0.089 & 0.101 & 0.113 & 0.126\\\cmidrule(lr){1-9}
	$R_{300,1500}$ & 0.064 & 0.087 & 0.110 & 0.129 & 0.150 & 0.176 & 0.197 & 0.219\\
	$R_{300,3000}$ & 0.116 & 0.151 & 0.181 & 0.210 & 0.244 & 0.279 & 0.311 & 0.344\\\cmidrule(lr){1-9}
	$R_{500,2500}$ & 0.109 & 0.139 & 0.174 & 0.207 & 0.245 & 0.285 & 0.317 & 0.351\\
	$R_{500,5000}$ & 0.191 & 0.251 & 0.322 & 0.395 & 0.469 & 0.545 & 0.599 & 0.691\\\midrule
	Average        & 0.100 & 0.118 & 0.147 & 0.176 & 0.207 & 0.240 & 0.266 & 0.300 & \fbox{0.194}\\\bottomrule
\end{tabular}
\begin{tabular}{@{}lccccccccl@{}}\toprule	
	&\multicolumn{8}{c}{$K$} \\\cmidrule(lr){2-9}
	$G_{p,q}$ & 3 & 4 & 5 & 6 & 7 & 8 & 9 & 10\\\cmidrule(lr){1-9}
	$G_{3,12}$ & 0.000 & 0.002 & 0.003 & 0.004 & 0.005 & 0.003 & 0.005 & 0.005 \\
	$G_{4,36}$ & 0.008 & 0.013 & 0.016 & 0.012 & 0.021 & 0.026 & 0.030 & 0.033 \\
	$G_{6,6}$  & 0.003 & 0.003 & 0.001 & 0.002 & 0.004 & 0.000 & 0.007 & 0.008 \\
	$G_{12,12}$& 0.005 & 0.007 & 0.010 & 0.013 & 0.011 & 0.018 & 0.017 & 0.014 \\\midrule
	Average    & 0.004 & 0.006 & 0.007 & 0.007 & 0.010 & 0.011 & 0.014 & 0.015 & \fbox{0.009}\\\bottomrule
\end{tabular}}}
\end{table}

\begin{table}[htb]
\centering
\caption{Average \texttt{AvDi} and \texttt{MiDi} of \texttt{IPM} in grid networks\label{tab:ipm_AvDi_MiDi_g}}
{\scriptsize{
\begin{tabular}{@{}lcccccccccl@{}}\toprule
	\texttt{AvDi} &\multicolumn{9}{c}{$K$}\\\cmidrule(lr){2-9}
	$G_{p,q}$	& 3 & 4 & 5 & 6 & 7 & 8 & 9 & 10\\\cmidrule(lr){1-9}
	$G_{3,12}$  & 0.948 & 0.833 & 0.800 & 0.769 & 0.758 & 0.741 & 0.732 & 0.728 \\
	$G_{4,36}$  & 0.892 & 0.912 & 0.982 & 0.859 & 0.830 & 0.817 & 0.805 & 0.798 \\
	$G_{6,6}$   & 0.933 & 0.933 & 0.900 & 0.893 & 0.866 & 0.857 & 0.844 & 0.840 \\
	$G_{12,12}$ & 0.969 & 0.969 & 0.954 & 0.951 & 0.939 & 0.935 & 0.929 & 0.926 \\\midrule
	Average     & 0.958 & 0.911 & 0.886 & 0.868 & 0.848 & 0.837 & 0.827 & 0.823 & \fbox{0.870}\\\bottomrule
\end{tabular}
\begin{tabular}{@{}lcccccccccl@{}}\toprule
	\texttt{MiDi}&\multicolumn{9}{c}{$K$} \\\cmidrule(lr){2-9}
	$G_{p,q}$  & 3 & 4 & 5 & 6 & 7 & 8 & 9 & 10\\\cmidrule(lr){1-9}
	$G_{3,12}$ & 0.846 & 0.153 & 0.153 & 0.153 & 0.153 & 0.000 & 0.000 & 0.000 \\
	$G_{4,36}$ & 0.947 & 0.526 & 0.526 & 0.500 & 0.500 & 0.131 & 0.078 & 0.078 \\
	$G_{6,6}$  & 0.900 & 0.900 & 0.600 & 0.600 & 0.600 & 0.600 & 0.500 & 0.500 \\
	$G_{12,12}$& 0.954 & 0.954 & 0.818 & 0.818 & 0.681 & 0.681 & 0.681 & 0.681 \\\midrule
	Average    & 0.911 & 0.633 & 0.524 & 0.517 & 0.483 & 0.353 & 0.314 & 0.314 & \fbox{0.506}\\\bottomrule
\end{tabular}}}
\end{table}

\end{document}